\title{Principal eigenvalues for Fully non linear singular or degenerate operators in  punctured balls }
\author{  Fran\c{c}oise Demengel\\
  D\'epartement de Math\'ematiques,
CY Paris University
 }
\date{}
\newtheorem{theo}{Theorem}[section]
\newtheorem{prop}[theo]{Proposition}
\newtheorem{rema}[theo]{Remark}
\newtheorem{cor}[theo]{Corollary}
\newtheorem{lemme}[theo]{Lemma}
\def\R{\mathbb  R}
\newcommand{\N}{{\mathbb N}}
\begin{document}
\maketitle

\begin{abstract} This paper is devoted to the proof of the existence of  the principal eigenvalue and related eigenfunctions
for fully nonlinear degenerate or singular uniformly elliptic equations posed in a punctured ball, in presence of  a singular potential. More precisely,   we   analyze  existence, uniqueness  and regularity    of solutions $( \bar\lambda_\gamma, u_\gamma)$ of the equation 
$$| \nabla u |^\alpha F( D^2 u_\gamma)+ \bar \lambda_\gamma {u_\gamma^{1+\alpha}  \over r^\gamma} = 0\ {\rm in} \ B(0,1)\setminus \{0\}, \ u_\gamma = 0 \ {\rm on} \ \partial B(0,1)$$
 where $u_\gamma>0$ in $B(0,1)$, $\alpha >-1$ and $\gamma >0$. We  prove existence of radial solutions which are continuous on $\overline{ B(0,1)}$ in the case $\gamma <2+\alpha$,  existence of unbounded solutions which do ot satisfy the boundary condition in the case $\gamma = 2+\alpha $ and a non existence result for $\gamma >2+\alpha$. 
 We also give the explicit value of $\bar \lambda_{2+\alpha} $ in the case of  Pucci's operators,  which generalizes the  Hardy--Sobolev constant for the Laplacian, and the previous results of \cite{BDL}. 
\end{abstract}

\section{Introduction}

  We look for   \lq\lq radial eigenvalues"  and related positive eigenfunctions for the problem 
  $$| \nabla u |^\alpha F( D^2 u) +\mu  r^{-\gamma} u^{1+\alpha}  = 0 \,  \mbox{ in }\ \overline{B(0,1)} \setminus \{0\},$$
 where  $\gamma>0$, and $F$ is a second order, fully nonlinear, uniformly elliptic operator. $\alpha$ is supposed to be $> -1$ and the solutions are intented in the viscosity sense given by \cite{BD1}.  In the model cases, $F$ will be one of  Pucci's extremal operators. It is clear that, if $\alpha = 0$,  as in the case of   Laplace operator,  $\gamma = 2$  is a critical case, being $F$ a second  order differential operator. This case $\alpha = 0$ is studied in \cite{BDL} and the present paper extends most of the results in \cite{BDL} to the case $\alpha \neq 0$. As in the case of the Laplacian, for $\gamma <2+\alpha$  not  "too bad" solutions are expected to exist, whereas   there are no solutions if  $\gamma >2+\alpha $. In the borderline case $\gamma=2+\alpha $, we will prove existence of unbounded "eigenfunctions". 
 \smallskip

 The case of the Laplacian and $\gamma = 2$ is naturally linked  to Hardy's inequality: 
 let us suppose that $N>2$ and let $u\in H_0^1 ( B(0,1))$ (respectively, $u\in H^1( \R^N)$), then ${u(x)\over |x|}$ belongs to $L^2(B(0,1))$ (respectively ${u(x)\over |x|} \in L^2( \R^N)$), and there exists a positive constant $c$ such that 
  $$ \int\left(  { |u(x)| \over |x|}\right)^2 \leq c \int | \nabla u |^2.$$
 Furthermore, the best constant is  $c= {4\over ( N-2)^2}$  and it is not achieved, in the sense that 
  \begin{equation}\label{varlap} \inf _{ u\in H_0^1( B(0,1)), \int_{B(0,1)} \left({ |u(x) |\over |x|}|\right)^2 =1}  \int_{B(0,1)}  | \nabla u |^2= { (N-2)^2 \over 4}.\end{equation}
   and there is no $u\in H_0^1$ which realizes the infimum.   By obvious arguments, $B(0,1)$ can be replaced by any bounded regular open set of $\R^N$ containing $0$,  and the optimal constant does not depend on the size of the open set. Note that the right hand side of (\ref{varlap}) coincides with the variational characterization of  the "first" or "principal " eigenvalue for the equation 
   $$-\Delta u = \lambda { u \over |x|^2}.$$
   For further knowledge on the Hardy--Sobolev inequality and for the case of the $p$-Laplacian, we refer to  \cite{GAPA, St}.

   On the other hand, if we replace in the constraint  the power $2$ by an exponent $\gamma <2$, compactness from $H_0^1(B(0,1))$ into the weighted space $L^2 ( B(0,1), {1\over r^\gamma})$ holds, and this allows to obtain  existence of minima in  $H_0^1 ( B(0,1))$ by  standard arguments in calculus of variations. In that case, denoting 
 \begin{equation}\label{variational}
 \bar \lambda_\gamma =  \inf _{ u\in H_0^1( B(0,1)), \int_{B(0,1)}{|u(x)|^2\over |x|^\gamma} =1}  \int _{B(0,1)}| \nabla u |^2
 \end{equation}
    one sees that $\bar \lambda_\gamma$ is also the first eigenvalue for the equation
    $$\Delta u + \bar \lambda _\gamma {u\over r^\gamma}  = 0\, ,$$ meaning  that $\bar \lambda_\gamma$ is such that there exists $u>0$ in $H_0^1( B(0,1))$ satisfying the equation. 
    
     Note that, by its definition, $\bar \lambda_\gamma$ depends on the domain, since
     $$\bar \lambda_\gamma ( B(0, t) ) = {1 \over t^{ 2-\gamma}}\bar \lambda _\gamma ( B(0,1)).$$
     
      If $\gamma >2$ there is no embedding from  $H_0^1(B(0,1))$ into $L^2 ( B(0,1), {1\over r^\gamma})$. Indeed, as an example, the function $$u(r) = r^{-{N-2\over 2}+\epsilon}(-\log r)$$ with $0<\epsilon < { \gamma-2 \over 2}$, belongs to $H_0^1 ( B(0,1))$ and satisfies 
      $$ \int _{ B(0,1) }{ u(|x|)^2 \over |x|^\gamma} = +\infty.$$

Let us  now focus on  non variational cases. More precisely, let us suppose that $F$ is fully nonlinear uniformly elliptic operator, that is $F$ is a continuous function defined on the set $\mathcal{S}_N$ of symmetric $N\times N$ matrices, and it satisfies, for positive constants $\Lambda\geq \lambda>0$,
       \begin{equation}\label{FNL} \lambda\,  \hbox{tr}( M^\prime) \leq F( M+ M^\prime) -F( M) \leq \Lambda\,  \hbox{tr}(M^\prime)\, ,
       \end{equation}
for all $M, M'\in \mathcal{S}_N$, with $M'$ positive semidefinite.
       
      Suppose also that $F$ is rotationally invariant and positively homogeneous of degree $1$, i.e. 
       \begin{equation}\label{posh} {\rm for \ any \ M} \in { \cal S}\  {\rm  and } \ t>0,
       F( tM) = tF(M).\end{equation}
  In this case the first eigenvalue for the equation 
      $$F( D^2 u) + \lambda { u \over |x|^\gamma} =0\qquad \hbox{ in } B(0,1)\setminus \{0\}$$
       can be defined  on the model  of \cite{BNV}, i.e. by the optimization formula 
        \begin{equation}\label{deflambdagamma1} \bar \lambda_\gamma = \bar \lambda_\gamma (F, r^{-\gamma})= \sup \{\mu\, : \  \exists\,   
        u \in C(B(0,1)\setminus \{0\})\, ,\ u>0 \hbox{ in } B(0,1),  \ \ F (D^2 u) + \mu { u \over r^\gamma} \leq 0\}\, ,
        \end{equation} 
where the differential inequality appearing above is understood in the viscosity sense.   
In the  model cases, the operator $F$ will be  one of Pucci's extremal operators. Let us recall their definition: by decomposing each matrix $M\in \mathcal{S}_N$ as $M = M^+-M^-$, where $M^+$ and $M^-$ are positive semidefinite matrices satisfying  $M^+M^-=O$, then Pucci's sup operator can be defined as
$$\mathcal{M}^+ _{\lambda, \Lambda} (M) = \Lambda \hbox {tr}( M^+)- \lambda \hbox {tr}( M^-)\, ,$$
 as well as Pucci's inf operator is given by
           $$\mathcal{M}^-_{\lambda, \Lambda} (M)= \lambda \hbox {tr}( M^+)- \Lambda \hbox {tr}( M^-)= -\mathcal{M}^+_{\lambda, \Lambda}  (-M).$$

 In the sequel, we will omit in the notation the dependence on  the ellipticity constants, which are fixed once for all.    
 
 We further recall that for a   $C^2$  radial function $u(x) = u(|x|)$, one has
                      $$ D^2 u(x)  = u^{\prime \prime} (r){ x\otimes x \over r^2} + {u^\prime (r) \over r} \left( I-{x\otimes x \over r^2}\right)\, ,$$
and, as a consequence,  
            $${\cal M}^+ ( D^2 u) =\Lambda (N-1)\left({u^\prime (r) \over r}\right)^+- \lambda (N-1)\left({u^\prime (r) \over r}\right)^-+ \Lambda (u^{\prime \prime}(r))^+-\lambda  ( u^{\prime \prime}(r) )^-\, ,$$
 $${\cal M}^- (D^2 u) =\lambda (N-1)\left({u^\prime (r) \over r}\right)^+- \Lambda (N-1)\left({u^\prime (r) \over r}\right)^-+ \lambda (u^{\prime \prime}(r))^+-\Lambda  ( u^{\prime \prime}(r) )^-\, .$$
 Thus,  the ODEs satisfied by radial solutions  of Pucci's extremal equations have coefficients depending on the  dimension like parameters, associated with $\mathcal{M}^+$ and $\mathcal{M}^-$ respectively, defined as     
$$\tilde N_+ = { \lambda \over \Lambda} ( N-1) +1\, ,\quad \tilde N_- = { \Lambda \over \lambda}( N-1)+1\, .$$
 Note that one has always 
$
\tilde{N}_-\geq N \geq \tilde{N}_+\, ,
$
with equalities holding true if and only if $\Lambda=\lambda$. We will  assume always that $\tilde{N}_+>2$. This will imply in particular that since $\gamma < \alpha+2$, $(\tilde N_+-1)(1+\alpha) + 1-\gamma >0$. 

       We recall the results obtained in \cite{BDL}. 
       
       \begin{theo}Suppose that $F$ satisfies \eqref{FNL}, \eqref{posh} and is rotationaly invariant. Suppose that 
 $\gamma <2$.    Then:
 \begin{itemize}   
\item[(i)] $\bar \lambda_\gamma $ defined in (\ref{deflambdagamma1} ) is positive and there exists a  function $u$, continuous in  $\overline{B(0,1)}$, radial, strictly positive in $B(0,1)$, such that 
     $$\left\{ \begin{array}{cl}
     F ( D^2 u) + \bar \lambda_\gamma { u \over r^\gamma} = 0 & \hbox{ in } \ B(0,1)\setminus \{0\}\\[1ex]
       u = 0 & \hbox{ on } \ \partial B(0,1)
       \end{array}\right..$$
       Furthermore $u$ is $C^2(B(0,1)\setminus\{0\})$ and it can be extended on  $B(0,1)$ as a Lipschitz continuous function if $\gamma \leq 1$, as a function of class $C^1(B(0,1))$ when $\gamma <1$,  and as an H\"older continuous function with exponent $2-\gamma$ if $\gamma >1$. 

\item[(ii)] Assume that  $\gamma = 2$. Then: 
   For the operator $\mathcal{M}^+$ one has
$$ \bar \lambda_2(\mathcal{M}^+) =\Lambda { (\tilde N_+-2)^2\over 4}$$
and the function
$
u(x)= r^{-\frac{\tilde{N}_+-2}{2}} (-\ln r)
$
is an explicit solution of
$$\left\{ \begin{array}{cl}
 \mathcal{M}^+( D^2 u) + \bar \lambda_\gamma { u \over r^\gamma} = 0 & \hbox{ in } \ B(0,1)\setminus \{0\}\\
       u = 0 & \hbox{ on } \ \partial B(0,1)
       \end{array}\right.$$
  
 \item[(iii)]   If $\gamma >2$, then  the eigenvalue $\bar \lambda^\prime_\gamma$ defined by 
$$ \bar \lambda_\gamma^{\prime} := \sup \{\mu\, : \  \exists\,   
        u \in C^2(B(0,1)\setminus \{0\})\, ,\ u>0 \hbox{ in } B(0,1)\setminus \{0\}, \ u \hbox{ radial},  \ \ F (D^2 u) + \mu { u \over r^\gamma} \leq 0\}
$$
  satisfies    $\bar \lambda_\gamma^\prime = 0$.  
 \end{itemize}     \end{theo}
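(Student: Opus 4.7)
My plan is to approximate the problem in the punctured ball by the corresponding problem on the smooth annulus $A_n=B(0,1)\setminus\overline{B(0,1/n)}$. Standard fully nonlinear eigenvalue theory (Berestycki--Nirenberg--Varadhan, extended by Quaas--Sirakov and Birindelli--Demengel) produces on each $A_n$ a principal eigenvalue $\lambda_n>0$ with a positive radial eigenfunction $u_n$, normalized by $\|u_n\|_\infty=1$, and $\lambda_n$ is monotone in $n$. The core estimate is the uniform bound
\begin{equation*}
|u_n'(r)|\leq C\, r^{1-\gamma}, \qquad 1/n\leq r\leq 1,
\end{equation*}
obtained by rewriting the radial Pucci operator in divergence form, $\mathcal{M}^+(D^2 u_n)=\Lambda\, r^{-(\tilde N_+-1)}\bigl(r^{\tilde N_+-1}u_n'\bigr)'$ (valid once one knows $u_n$ is decreasing and convex, which follows from Hopf's lemma and the sign of $F$), integrating once from $0$ to $r$, and using $\|u_n\|_\infty\leq 1$ together with the assumption $\tilde N_+>2>\gamma$. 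This one estimate yields simultaneously the claimed boundary regularity ($C^{0,2-\gamma}$ for $\gamma\in(1,2)$, Lipschitz for $\gamma=1$, and $C^1$ with $u'(0)=0$ for $\gamma<1$, via interior $C^{1,\alpha}$ estimates) and the Arzel\`a--Ascoli compactness needed to pass to the limit; stability of viscosity solutions then gives the eigenpair $(\bar\lambda_\gamma,u)$. Strict positivity of $\bar\lambda_\gamma$ is obtained by exhibiting, for small $\mu>0$, an explicit radial positive strict supersolution vanishing on $\partial B(0,1)$.

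\textbf{Part (ii): explicit eigenvalue for Pucci at $\gamma=2$.} The plan is direct verification of the candidate $u(r)=r^{-\beta}(-\ln r)$ with $\beta=(\tilde N_+-2)/2$. A short computation gives
\begin{equation*}
u'(r)=-r^{-\beta-1}\bigl(\beta(-\ln r)+1\bigr)<0, \qquad u''(r)=r^{-\beta-2}\bigl(\beta(\beta+1)(-\ln r)+2\beta+1\bigr)>0
\end{equation*}
on $(0,1)$, so the radial expression of $\mathcal{M}^+$ simplifies to $\Lambda[u''+(\tilde N_+-1)u'/r]$. Isolating the singular $-\ln r$ factor and the bounded constant part, one checks that the choice $\beta=(\tilde N_+-2)/2$ annihilates both simultaneously and leaves $\mathcal{M}^+(D^2 u)+\Lambda\beta^2\,u/r^2=0$, whence $\bar\lambda_2\geq\Lambda(\tilde N_+-2)^2/4$. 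The matching upper bound is obtained by using $u$ as a comparison function against any admissible supersolution in the supremum characterization (\ref{deflambdagamma1}), arguing via the maximum principle on the approximating annuli.

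\textbf{Part (iii): non-existence for $\gamma>2$.} I would argue by contradiction. Suppose $\mu>0$ and let $u$ be a radial positive $C^2$ supersolution on $B(0,1)\setminus\{0\}$. For every $t\in(0,1)$, the restriction of $u$ to $A_{t,1}:=B(0,1)\setminus\overline{B(0,t)}$ is still a positive supersolution, so by the annular eigenvalue theory one has $\mu\leq\Lambda^*_{t,1}$, the principal eigenvalue of the weighted problem on $A_{t,1}$. Now the $1$-homogeneity of $F$ together with the change of variables $x\mapsto tx$ yields the scaling identity
\begin{equation*}
\Lambda^*_{t,1}=t^{\gamma-2}\,\Lambda^*_{1,1/t}.
\end{equation*}
Since $\Lambda^*_{1,1/t}$ stays bounded above as $t\to 0$ (by the first eigenvalue of the problem on the exterior domain $\{r>1\}$, finite under $\tilde N_+>2$), and since $t^{\gamma-2}\to 0$ when $\gamma>2$, one concludes $\Lambda^*_{t,1}\to 0$, hence $\mu\leq 0$, contradicting $\mu>0$.

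\textbf{Main obstacle.} The crux of the argument is the sharp behavior near the singularity at the origin: the gradient bound $|u_n'(r)|\lesssim r^{1-\gamma}$ in Part~(i), which single-handedly supplies both compactness and the exact boundary regularity, and the correct asymptotics $\Lambda^*_{t,1}\to 0$ in Part~(iii). Once these two ingredients are in place, the rest---the explicit calculation of Part~(ii), positivity, and stability of viscosity solutions under approximation---reduces to now-standard tools in the fully nonlinear viscosity framework.
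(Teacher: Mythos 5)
Your plan for part (i) hinges on the estimate $|u_n'(r)|\leq C\,r^{1-\gamma}$ holding uniformly on $[1/n,1]$ for the Dirichlet eigenfunctions of the annuli $A_n$, and this is where the argument breaks. Those eigenfunctions vanish on the inner sphere $r=1/n$ and climb to height of order one across a layer of width comparable to $1/n$ (near the inner boundary the solution is well approximated by $A\bigl(1-(r\,n)^{2-\tilde N}\bigr)$ with $A$ bounded below), so $|u_n'|\sim n$ there, which is much larger than $n^{\gamma-1}$ for every $\gamma<2$; the bound cannot be uniform up to the inner boundary. Moreover the two structural facts you invoke to derive it are not available: $u_n$ is \emph{not} decreasing (it increases away from the inner sphere), convexity is false in general --- for $\gamma<1$ the actual eigenfunction satisfies $u''<0$ near the origin, as the fixed point construction in Subsection 2.2 shows --- and one cannot ``integrate from $0$'' since the equation only holds for $r>1/n$ and the flux at $r=1/n$ is not controlled. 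This is precisely why the paper regularizes the potential, $r^{-\gamma}\to (r^2+1/n)^{-\gamma/2}$, on the whole ball rather than truncating the domain: the approximating eigenfunctions are then radial, positive and nonincreasing from the center, and the derivative bound is obtained from differential inequalities valid for either sign of $u''$ (as in Lemma \ref{fabiana1} and Theorem \ref{th3}), up to $r=0$; equicontinuity, the stated regularity, and the identification of the limit eigenvalue (via Corollary \ref{delta}) then follow. With your domain truncation you would have to localize the estimate away from the inner boundary, rule out that the normalized eigenfunctions degenerate as the maximum point drifts to the origin, and still prove that $\lim_n\lambda_n$ equals $\bar\lambda_\gamma$ rather than merely dominating it.

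In part (ii) the direct verification is correct but only yields $\bar\lambda_2\geq \Lambda(\tilde N_+-2)^2/4$; the reverse inequality is the substantive half, and ``using $u$ as a comparison function against any admissible supersolution via the maximum principle on the approximating annuli'' is not yet an argument: comparison on the punctured ball requires control of both functions at the singularity, where your candidate blows up and a competing supersolution may blow up at a different rate, and the annulus eigenvalues converge exactly to the quantity you are trying to bound, so nothing is gained without further input. The paper closes this half through the weighted one-dimensional variational problem: the Hardy-type computation with quasi-minimizers $r^{-\tau+\epsilon}(-\log r)$ identifies $\bar\lambda_{var,2+\alpha}$, Proposition \ref{propvar} shows minimizers for $1<\gamma<2+\alpha$ are monotone with the right convexity so that they solve the Pucci equation, and then $\bar\lambda_{2+\alpha}\leq\lim_{\gamma\to 2+\alpha}\bar\lambda_\gamma$ gives the upper bound; some input of this kind is indispensable. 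By contrast, your part (iii) is essentially correct and is a genuinely different, shorter route than the paper's (which bootstraps pointwise lower bounds on a radial supersolution near the origin until the flux $|u'|^\alpha u' r^{(\tilde N_--1)(1+\alpha)}$ blows up): the restriction-plus-scaling identity $\Lambda^*_{t,1}=t^{\gamma-2}\Lambda^*_{1,1/t}$ does the job, except that the boundedness of $\Lambda^*_{1,1/t}$ should be justified by domain monotonicity ($\Lambda^*_{1,1/t}\leq \Lambda^*_{1,2}$ for $t\leq 1/2$), not by comparison with an exterior-domain eigenvalue, which goes the wrong way.
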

    
\bigskip

    We now describe the results here enclosed. As we said above,  we look for  positive "radial eigenvalues" for the equation 
  $$| \nabla u |^\alpha F( D^2 u) +\mu  r^{-\gamma} u ^{1+\alpha} = 0 \,  \mbox{ in }\ \overline{B(0,1)} \setminus \{0\},$$
  
   where $\alpha >-1$, $0< \gamma < \alpha+2$.  $F$ is always a fully non linear positively homogeneous operator,  ie, it satisfies \eqref{FNL}, \eqref{posh} , and is   rotationally invariant.     For $\gamma >0$ we define 
      \begin{equation}\label{deflambdagamma} \bar \lambda_\gamma = \sup \{\mu\, : \  \exists\,   
        u \in C(B(0,1)\setminus \{0\})\, ,\ u>0 \hbox{ in } B(0,1), \ {\rm radial}, \ | \nabla u |^\alpha F (D^2 u) + \mu { u^{1+\alpha}  \over r^\gamma} \leq 0\}.
        \end{equation}

      \begin{theo} \label{exigamma}
      Suppose that $F$ satisfies \eqref{FNL}, \eqref{posh} and is rotationaly invariant,  and that $\alpha >-1$. 
Suppose that $0< \gamma <2+\alpha $.    Then:
 \begin{itemize}   
\item[(i)] $\bar \lambda_\gamma $ defined in (\ref{deflambdagamma} ) is positive and there exists a  function $u$, continuous in  $\overline{B(0,1)}$, radial, strictly positive in $B(0,1)$, such that 
     $$\left\{ \begin{array}{cl}
     | \nabla u |^\alpha F ( D^2 u) + \bar \lambda_\gamma { u^{1+\alpha}  \over r^\gamma} = 0 & \hbox{ in } \ B(0,1)\setminus \{0\}\\[1ex]
       u = 0 & \hbox{ on } \ \partial B(0,1)
       \end{array}\right..$$
       Furthermore $u$ is ${ \cal C}^1(B(0,1)\setminus\{0\})$,  $|u^\prime |^\alpha u^\prime \in { \cal C}^1(B(0,1)\setminus\{0\})$, $u$ is Lipschitz continuous on $\overline{B(0,1)}$ in the case $\gamma \leq 1$, ${\cal C}^1(B(0,1))$ when $\gamma <1$,  and H\"older continuous of exponent ${2+\alpha -\gamma\over 1+\alpha} $ if $\gamma >1$. 
       \end{itemize}
       \end{theo}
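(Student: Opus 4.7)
I would follow the scheme of \cite{BDL} for $\alpha=0$, with modifications to handle the $|\nabla u|^\alpha$ factor. The proof splits into an annular approximation, passage to the limit, and regularity at the origin.

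\emph{Annular approximation.} For $\ep\in(0,1/2)$, consider the eigenvalue problem on the annulus $A_\ep=B(0,1)\setminus\overline{B(0,\ep)}$, where $r^{-\gamma}$ is bounded. By the Birindelli--Demengel theory of principal eigenvalues for degenerate/singular fully nonlinear operators on smooth bounded domains, there exist a principal eigenvalue $\mu_\ep>0$ and a positive radial eigenfunction $u_\ep$ vanishing on $\partial A_\ep$ (the radiality follows from rotational invariance and uniqueness up to scalar multiples); normalize $\|u_\ep\|_\infty=1$. To bound $\mu_\ep$ uniformly from above, I would construct a fixed compactly supported radial subsolution $\ph\ge 0$ in $\{1/4\le r\le 3/4\}$ satisfying $|\ph'|^\alpha F(D^2\ph)+K\,\ph^{1+\alpha}/r^\gamma\ge 0$ for some $K$; the standard comparison between a subsolution and the principal eigenfunction then forces $\mu_\ep\le K$ for all $\ep<1/4$. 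Monotonicity in $\ep$ (larger annulus, smaller eigenvalue) ensures $\mu_\ep\to\bar\mu$ as $\ep\to 0$.

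\emph{Passage to the limit and identification with $\bar\lambda_\gamma$.} The interior $C^{1,\beta}$ estimates of \cite{BD1} give uniform compactness of $\{u_\ep\}$ in $C^1_{loc}(B(0,1)\setminus\{0\})$; by stability of viscosity solutions a subsequence converges to a radial $u\ge 0$ solving the limit equation with eigenvalue $\bar\mu$. To obtain $u\not\equiv 0$, one must prevent the maximum points $r_\ep$ from migrating to $0$ or $1$: this is handled by barrier arguments at $r=1$ and by the supersolution constructed in the next step near $0$. Since $u$ is admissible in the supremum (\ref{deflambdagamma}) defining $\bar\lambda_\gamma$, one gets $\bar\lambda_\gamma\ge\bar\mu$; the reverse inequality follows by the Berestycki--Nirenberg--Varadhan type argument, comparing any candidate supersolution with eigenvalue $\mu>\bar\mu$ against the annular eigenfunctions $u_\ep$. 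The Dirichlet condition $u=0$ on $\partial B(0,1)$ is obtained from a radial barrier of the form $(1-r)^\sigma$ applied to each $u_\ep$ and passed to the limit.

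\emph{Regularity at the origin.} Interior $C^1$ regularity and $|u'|^\alpha u'\in C^1(B(0,1)\setminus\{0\})$ come from rewriting the radial equation as a first-order ODE for $|u'|^\alpha u'$ with a continuous right-hand side on compact subsets of $(0,1)$. The behavior near $0$ is the main obstacle: the ansatz $u(r)=u(0)+c\,r^\beta$ with $\beta=(2+\alpha-\gamma)/(1+\alpha)$ balances the singularities of $|u'|^\alpha F(D^2 u)$ and $u^{1+\alpha}\,r^{-\gamma}$ as $r\to 0$, and $\beta>0$ requires exactly $\gamma<2+\alpha$. I would make this precise by building, for appropriate positive constants $A,B$, sub- and supersolutions of the form $A\pm B r^\beta$ on a small punctured ball, and squeezing $u$ between them via the comparison principle. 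The resulting bound $|u(x)-u(0)|=O(|x|^\beta)$ yields the Lipschitz case ($\gamma\le 1$, $\beta\ge 1$), the $C^1$ refinement ($\gamma<1$, $\beta>1$), and H\"older regularity with exponent $\beta$ when $\gamma>1$.

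I expect the hardest part to be the construction of the local sub/supersolutions of the form $A\pm B r^\beta$ and verifying that they are genuine viscosity sub/supersolutions near the origin in the presence of the degenerate/singular gradient term $|\nabla u|^\alpha$; the computation requires a case analysis on the signs of $u''$ and $u'/r$ specific to $\mathcal{M}^\pm$, using the assumption $\tilde N_+>2$ to ensure the sign of the resulting quantity. The second subtlety is the uniform non-degeneracy of $u_\ep$ near $0$, i.e., ensuring that mass does not escape through the singular point in the limit, which is precisely what fails once $\gamma\ge 2+\alpha$.
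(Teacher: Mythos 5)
Your route to existence (truncate the domain to annuli $A_\varepsilon=B(0,1)\setminus\overline{B(0,\varepsilon)}$ and pass to the limit) is genuinely different from the paper's: the paper regularizes the potential ($r^{-\gamma}\mapsto (r^2+\varepsilon^2)^{-\gamma/2}$ or $(r^2+1/n)^{-\gamma/2}$) on the full ball, and obtains the eigenfunction by letting solutions of the Dirichlet problems blow up as $\mu\uparrow\bar\lambda_\gamma'$ (Theorems \ref{exi1}, \ref{exilambda}, \ref{exieig3}); the annular truncation only appears in Corollary \ref{delta}, and only for the eigenvalue. Your plan is viable in principle, but two of its steps, as written, hide genuine gaps rather than mere computations.

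The first gap is the claim that the regularity at the origin follows by "squeezing $u$ between $A\pm Br^\beta$ via the comparison principle". The operator $u\mapsto|\nabla u|^\alpha F(D^2u)+\mu u^{1+\alpha}r^{-\gamma}$ with $\mu>0$ has a zeroth-order term of the wrong sign for comparison, and even after absorbing $\mu u^{1+\alpha}$ into a bounded right-hand side, a comparison on $B(0,\delta)\setminus\{0\}$ must be justified at the puncture, where neither function carries boundary data. This is precisely the technical core of the paper: Lemmas \ref{fabiana1} and \ref{fabiana} integrate the radial inequality in the form $\bigl(|u'|^\alpha u'\,r^{(\tilde N_\pm-1)(1+\alpha)}\bigr)'\geq c\,r^{(\tilde N_\pm-1)(1+\alpha)-\gamma}$ and show, using only the boundedness of $u$, that $|u'|^\alpha u'\,r^{(\tilde N_\pm-1)(1+\alpha)}\to0$, whence the sign of $u'-w'$ near $0$ and the two-sided bound $|u'(r)|\leq Cr^{(1-\gamma)/(1+\alpha)}$. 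You identify the right exponent $\beta=\frac{2+\alpha-\gamma}{1+\alpha}$ and the right barrier shape (which is essentially the paper's $w=L(1-r^\tau)+b$), and you correctly flag the verification of the barriers as delicate, but the barriers alone do not close the argument without this ODE step across the puncture. Note that the same uniform estimate, applied on the annuli, is also what prevents the maximum points $r_\varepsilon$ from drifting to $0$ (since $1=u_\varepsilon(r_\varepsilon)-u_\varepsilon(\varepsilon)\leq Cr_\varepsilon^{\beta}$), so your "non-degeneracy near $0$" worry and your regularity step have one and the same fix. The second gap is the positivity of $\bar\lambda_\gamma$: you bound $\mu_\varepsilon$ from above, but $\bar\mu=\lim_{\varepsilon\to0}\mu_\varepsilon$ is a decreasing limit of positive numbers and could a priori vanish. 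One needs a positive supersolution on the whole punctured ball for some fixed $\mu>0$; the paper's $w=L(1-r^\tau)$ with $0<\tau\leq\min\bigl(\frac{2+\alpha-\gamma}{1+\alpha},\tilde N_+\bigr)$ satisfies $|\nabla w|^\alpha\mathcal{M}^+(D^2w)\leq -C(L\tau)^{1+\alpha}r^{-\gamma}$ and $w\leq L$, giving $\bar\lambda_\gamma\geq C>0$; this (or an equivalent uniform lower bound on $\mu_\varepsilon$) must be added.
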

       \begin{rema} In fact we will prove that $u\in { \cal C}^2( B(0,1)\setminus\{0\}$. 
       \end{rema} 
             
       Statement (i) of the  above theorem shows in particular that $\bar \lambda_\gamma$ is actually achieved on smooth radial eigenfunctions. Thus,  
      if we define 
\begin{equation}\label{lambdaprime}\begin{array}{lc}
 \bar \lambda_\gamma^\prime &= \sup \{\mu\, : \  \exists\,   
        u \in C(B(0,1)\setminus \{0\})\, ,\ u>0 \hbox{ in } B(0,1), \ {\rm radial}, \ u \in {\cal C}^1(B(0,1)\setminus\{0\}), \\
        & |u^\prime |^\alpha u^\prime \in { \cal C}^1 (B(0,1)\setminus\{0\}),         \  \ | \nabla u |^\alpha F (D^2 u) + \mu { u^{1+\alpha}  \over r^\gamma} \leq 0\},
      \end{array} 
      \end{equation}  it then follows that 
  $$\bar \lambda_\gamma = \bar \lambda_\gamma^\prime.$$
 Actually, we will work initially with the smooth eigenvalue $\bar \lambda_\gamma'$, and we will finally prove that it coincides with $\bar \lambda_\gamma$.

       \begin{theo}\label{gamma=2+alpha}
 Suppose that $F = {\cal M}^+$ and that  $\gamma = 2+\alpha $. Then: 
\begin{itemize}
\item[(i)]    $$ \bar \lambda_{2+\alpha}  ={\Lambda\over \alpha+1} \left( { (\tilde N_+-2)(1+\alpha)\over 2+\alpha}\right)^{2+\alpha} $$
   Some  "eigenfunction" is $r^{-\tau}$ with 
     $\tau = {( \tilde N_+-2)(\alpha+1)\over 2+\alpha}$.

\item[(ii)]  $\bar \lambda_{2+\alpha} $ is stable under various  regularization 
     $$\bar \lambda_{2+\alpha}  = \lim_{ \gamma \rightarrow 2+\alpha} \bar \lambda_\gamma$$ 
      $$\bar \lambda_{2+\alpha}  = \lim_{ \delta \rightarrow 0 } \bar \lambda\left({1\over r^{2+\alpha} },  B(0,1) \setminus \overline{ B(0, \delta)}\right)$$
      $$\bar \lambda_{2+\alpha}  = \lim_{ \epsilon \rightarrow 0} \bar \lambda \left( {1 \over ( r^2+\epsilon^2)^{\alpha+2\over 2}}, B(0,1)\right).$$

\end{itemize}
    \end{theo}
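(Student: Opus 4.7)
For part (i), the lower bound $\bar\lambda_{2+\alpha}\geq\mu_*$ follows from a direct verification that $u_*(r)=r^{-\tau_*}$ with $\tau_*=(\alpha+1)(\tilde N_+-2)/(\alpha+2)$ is an exact solution of $|\nabla u_*|^\alpha\mathcal{M}^+(D^2 u_*)+\mu_* u_*^{1+\alpha}/r^{2+\alpha}=0$ in $B(0,1)\setminus\{0\}$. Since $u_*'<0$ and $u_*''>0$, one has $\mathcal{M}^+(D^2 u_*)=\Lambda[u_*''+(\tilde N_+-1)u_*'/r]$; inserting $r^{-\tau}$ forces $\mu=\Lambda\tau^{\alpha+1}(\tilde N_+-2-\tau)$ (the powers of $r$ match precisely when $\gamma=2+\alpha$), and optimizing the concave right-hand side over $\tau\in(0,\tilde N_+-2)$ yields $\tau_*$ together with the announced formula for $\mu_*$.

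The reverse inequality $\bar\lambda_{2+\alpha}\leq\mu_*$ is the heart of the argument; I would proceed by contradiction. Assuming a positive decreasing radial supersolution $u$ exists for some $\mu>\mu_*$ in the regularity class of (\ref{lambdaprime}), the change of variable $t=-\log r$, $w(t)=u(e^{-t})$ transforms the inequality into the autonomous form
$$\Lambda(w')^\alpha\bigl[w''-(\tilde N_+-2)w'\bigr]+\mu w^{1+\alpha}\leq 0,\qquad w>0,\;w'>0\;\text{on}\;[0,\infty).$$
Setting $\xi=w'/w$ and dividing by $w^{1+\alpha}$ reduces this to
$$\frac{1}{\alpha+1}\frac{d}{dt}\xi^{\alpha+1}+g(\xi)\leq 0,\qquad g(\xi):=\xi^{\alpha+2}-(\tilde N_+-2)\xi^{\alpha+1}+\mu/\Lambda.$$
A direct calculus check shows that $g$ attains its minimum $(\mu-\mu_*)/\Lambda>0$ at $\xi=\tau_*$, so integration forces $\xi^{\alpha+1}$ to become negative in finite time, contradicting $\xi>0$. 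The main technical obstacle will be to reduce to this smooth monotone setting, i.e.\ to establish $\bar\lambda_{2+\alpha}=\bar\lambda'_{2+\alpha}$ and to argue that any radial supersolution can be taken decreasing; both should follow by adapting the regularity and monotonicity arguments used for $\gamma<2+\alpha$ in Theorem \ref{exigamma}.

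For part (ii), each stability statement is proved by a two-sided squeeze. The lower bound direction is essentially immediate: $\gamma\mapsto\bar\lambda_\gamma$ is non-increasing in $\gamma$ (since for $r<1$ and $\gamma<\gamma'$ one has $r^{-\gamma}\leq r^{-\gamma'}$, so every supersolution of the $\gamma'$-problem is one of the $\gamma$-problem), domain monotonicity handles the annular problem, and the pointwise comparison $(r^2+\epsilon^2)^{-(2+\alpha)/2}\leq r^{-(2+\alpha)}$ handles the $\epsilon$-regularization; each of these delivers the inequality $\geq\mu_*$ at the limit. For the matching upper bounds, I would take the positive radial eigenfunction furnished by Theorem \ref{exigamma} (for the $\gamma\to 2+\alpha$ limit), by the BNV theory on the annulus (for $\delta\to 0$), or by the same theory on $B(0,1)$ with the bounded weight $(r^2+\epsilon^2)^{-(2+\alpha)/2}$ (for $\epsilon\to 0$); after normalizing each at a fixed interior reference point such as $r=1/2$, apply the local $C^{1,\beta}$ estimates for degenerate/singular fully nonlinear equations to extract a subsequential locally uniform limit $u_\infty$ on $B(0,1)\setminus\{0\}$, and pass to the viscosity limit (via half-relaxed limits) to obtain a positive radial solution of the critical equation at the limit eigenvalue $L$. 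Using $u_\infty$ in the definition (\ref{deflambdagamma}) of $\bar\lambda_{2+\alpha}$ then gives $L\leq\mu_*$. The delicate point here will be ruling out degeneracy of $u_\infty$ uniformly across the regularization parameter.
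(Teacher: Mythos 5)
Your part (i) computations are correct: $r^{-\tau}$ with $\tau=\frac{(\tilde N_+-2)(1+\alpha)}{2+\alpha}$ does solve the critical equation with $\mu_*=\frac{\Lambda}{1+\alpha}\tau^{2+\alpha}$, and the logarithmic change of variables together with the Riccati-type inequality for $\xi=w'/w$, whose nonlinearity $g$ has minimum $(\mu-\mu_*)/\Lambda$ at $\xi=\tau$, is a sound and genuinely different (more elementary) route to the upper bound than the paper's. The paper instead introduces the weighted variational eigenvalue $\bar\lambda_{var,\gamma}$ (a Hardy-type inequality in the dimension-like exponent $(\tilde N_+-1)(1+\alpha)$, Theorem \ref{alpha+2}), proves via Proposition \ref{propvar} that subcritical minimizers are decreasing with $(|v'|^\alpha v')'\geq 0$ so that $\bar\lambda_\gamma=\frac{\Lambda}{1+\alpha}\bar\lambda_{var,\gamma}$ for $1<\gamma<2+\alpha$ (Corollary \ref{corvargamma}), and then lets $\gamma\to 2+\alpha$; combined with the monotonicity $\bar\lambda_{2+\alpha}\leq\bar\lambda_\gamma$, which holds at the level of the viscosity definition \eqref{deflambdagamma}, this yields both (i) and the first limit in (ii) without ever analyzing supersolutions at criticality.

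The genuine gap is precisely the reduction you defer: your Riccati argument excludes supersolutions only in the class of \eqref{lambdaprime} (radial, ${\cal C}_{1,\alpha}$, with $u'\leq 0$ near $0$), i.e.\ it bounds $\bar\lambda'_{2+\alpha}$, whereas the theorem concerns $\bar\lambda_{2+\alpha}$ of \eqref{deflambdagamma}, defined through merely continuous viscosity supersolutions. The identification $\bar\lambda_\gamma=\bar\lambda'_\gamma$ is obtained in the paper only for $\gamma<2+\alpha$, through Theorem \ref{th3} and Corollary \ref{delta}, and that proof does not "adapt" to $\gamma=2+\alpha$: it rests on the uniform bound $|u_\epsilon'|\leq C r^{\frac{1-\gamma}{1+\alpha}}$ and the normalization $u_\epsilon(0)=1$, and at $\gamma=2+\alpha$ the exponent equals $-1$ and eigenfunctions blow up at the origin (witness $r^{-\tau}$), so the equicontinuity up to $0$ and the normalization at $0$ both fail; this is exactly why the paper routes the upper bound through the subcritical problems. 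The fix within your own scheme is to reorder the pieces: use your monotonicity observation to get $\bar\lambda_{2+\alpha}\leq\lim_{\gamma\to 2+\alpha}\bar\lambda_\gamma$, and apply the Riccati argument not to an arbitrary admissible supersolution but to the radial limit function produced by your compactness step (which is an actual solution, hence in ${\cal C}_{1,\alpha}$ by Remark \ref{remC1alpha}, and decreasing near $0$ by the arguments of Lemma \ref{fabiana1} and Theorem \ref{maxpgamma}, which remain valid since $(\tilde N_\pm-2)(1+\alpha)>0$). Relatedly, in part (ii) the nondegeneracy of the normalized limits, which you only flag, is a real issue needing a Harnack-type or ODE lower bound on compact subsets of the punctured ball; note that the paper proves the $\gamma\to 2+\alpha$ limit by convergence of the variational eigenvalues (Corollary \ref{convgammavar}) rather than by compactness of eigenfunctions, and gives no separate argument for the $\delta$- and $\epsilon$-limits at criticality, so there your squeeze strategy, once the nondegeneracy is supplied, would actually fill in more detail than the text does.
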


      We will finally  prove

        \begin{theo}\label{gamma>}
        Suppose that $F$ satisfies \eqref{FNL}, \eqref{posh} and is rotationaly invariant.  Suppose that $\alpha >-1$. 
         If $\gamma > \alpha+2$, $\bar \lambda_\gamma^\prime = 0$.
         \end{theo}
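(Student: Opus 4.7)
\emph{Plan of proof.} My strategy combines a scaling dichotomy with a monotonicity comparison in $\gamma$. Positive homogeneity of $F$ together with the strict inequality $\gamma>\alpha+2$ will force $\bar\lambda_\gamma'$ to be either $0$ or $+\infty$, and a pointwise comparison between the potentials $r^{-\gamma}$ and $r^{-\gamma_1}$ on $B(0,1)$ for any subcritical exponent $\gamma_1\in(0,\alpha+2)$ will bound $\bar\lambda_\gamma'$ above by the finite quantity $\bar\lambda_{\gamma_1}'$ provided by Theorem~\ref{exigamma}. These two facts then force $\bar\lambda_\gamma'=0$.

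For the scaling step, given any admissible pair $(\mu_0,u_0)$ with $\mu_0>0$ and any $\delta\in(0,1)$, I set $v_\delta(y):=u_0(\delta y)$. This $v_\delta$ is positive and radial, and the chain rule shows it inherits the regularity class required in \eqref{lambdaprime} on $B(0,1/\delta)\setminus\{0\}\supset B(0,1)\setminus\{0\}$. Using $\nabla v_\delta(y)=\delta\,\nabla u_0(\delta y)$, $D^2 v_\delta(y)=\delta^2 D^2u_0(\delta y)$ and $F(tM)=tF(M)$, the original supersolution inequality evaluated at $x=\delta y$ and multiplied by $\delta^{\alpha+2}$ becomes
\[
|\nabla v_\delta|^\alpha F(D^2 v_\delta)+\mu_0\,\delta^{\alpha+2-\gamma}\,\frac{v_\delta^{1+\alpha}}{|y|^\gamma}\le 0\quad\text{in }B(0,1)\setminus\{0\}.
\]
Because $\alpha+2-\gamma<0$, the coefficient $\mu_0\,\delta^{\alpha+2-\gamma}$ tends to $+\infty$ as $\delta\to 0^+$; hence admissibility of a single $\mu_0>0$ propagates to arbitrarily large $\mu$. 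Thus either $\bar\lambda_\gamma'=+\infty$ or no $\mu>0$ is admissible at all.

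To exclude the first alternative, fix any $\gamma_1\in(0,\alpha+2)$. On $B(0,1)$ one has $r^{-\gamma}\ge r^{-\gamma_1}$, so any positive radial $u$ that witnesses $\mu$ in the definition of $\bar\lambda_\gamma'$ also satisfies $|\nabla u|^\alpha F(D^2u)+\mu u^{1+\alpha}/r^{\gamma_1}\le 0$, and hence witnesses the same $\mu$ in $\bar\lambda_{\gamma_1}'$. Therefore $\bar\lambda_\gamma'\le\bar\lambda_{\gamma_1}'$, and the identity $\bar\lambda_{\gamma_1}=\bar\lambda_{\gamma_1}'$ noted just after Theorem~\ref{exigamma}, combined with the finiteness of $\bar\lambda_{\gamma_1}$ supplied by that theorem, makes the right-hand side finite. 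The dichotomy of the previous paragraph then yields $\bar\lambda_\gamma'=0$. The one conceptual point is recognising that supercriticality can be exploited in two complementary directions: scaling drives $\mu$ upward, while the potential comparison reduces to a subcritical problem with smaller $\gamma_1$; once this is set up, everything else reduces to a chain-rule verification.
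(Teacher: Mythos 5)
Your proof is correct, but it follows a genuinely different route from the paper's. The paper argues directly on the radial ODE: assuming a positive radial ${\cal C}_{1,\alpha}$ supersolution of $|\nabla u|^\alpha {\cal M}^-(D^2u)+\mu u^{1+\alpha}r^{-\gamma}\le 0$ with $\mu>0$ (which suffices, since ${\cal M}^-\le F$), it shows $u'\le 0$ near the origin and then iterates lower bounds $u(r)\ge c_j\, r^{j(2+\alpha-\gamma)/(1+\alpha)}$, which after finitely many steps (treating separately the case where $(\tilde N_--2)(1+\alpha)/(\gamma-2-\alpha)$ is an integer) force $\lim_{r\to 0}|u'|^\alpha u'(r)\, r^{(\tilde N_--1)(1+\alpha)}=+\infty$, contradicting $u'\le 0$; this is entirely self-contained and uses none of the subcritical theory. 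You instead exploit the $(\alpha+2)$-homogeneity of $|\nabla u|^\alpha F(D^2 u)$ through the dilation $v_\delta=u_0(\delta\,\cdot)$, which correctly yields the dichotomy $\bar\lambda_\gamma'\in\{0,+\infty\}$ (the dilation does preserve the admissibility class in \eqref{lambdaprime}, and no boundary condition is imposed there), and you exclude $+\infty$ via the pointwise comparison $r^{-\gamma}\ge r^{-\gamma_1}$ on $B(0,1)$ for $\gamma_1\in(0,\alpha+2)$ — valid for the relevant $\mu\ge 0$, while negative admissible $\mu$ do not affect the supremum — giving $\bar\lambda_\gamma'\le\bar\lambda_{\gamma_1}'$, which is finite by Theorem \ref{exigamma} (or, more cheaply, by the bound $\bar\lambda_{\gamma_1}'\le\bar\lambda_{\gamma_1}^{\epsilon}$ in the proof of Theorem \ref{th3}). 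There is no circularity, since those subcritical results are established earlier and independently of the case $\gamma>\alpha+2$. What your argument buys is brevity and a transparent structural explanation (the potential scales supercritically relative to the homogeneity of the operator); what the paper's bootstrap buys is independence from the existence/finiteness machinery of Section 2, plus the slightly stronger conclusion that for $\gamma>\alpha+2$ no positive radial ${\cal C}_{1,\alpha}$ supersolution exists even for the smaller operator ${\cal M}^-$ and any $\mu>0$.
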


\section{The case $\gamma < \alpha+2$}
\subsection{Maximum principles, existence and regularity results}

For simplicity in the reading let us introduce   the space invoked above 
\begin{equation}\label{C1alpha}
 {\cal C}_{1, \alpha} = \{ u \in { \cal C}^1( B(0,1)\setminus\{0\},\ {\rm radial}\  \ |u^\prime|^\alpha u^\prime \in { \cal C}^1( B(0,1)\setminus\{0\}\}.
 \end{equation} 
\begin{rema}\label{remC1alpha}
Let us observe that the radial   solutions of $| \nabla u |^\alpha F(D^2 u) = {f\over r^\gamma}$ satisfy  $u\in { \cal C}_{1, \alpha}$. Indeed,  let us  use the notations in \cite{GPL}, \cite{EFQ}  : 
$${\cal F} (r, l, m) = F(  lId+ (m-l) {x\otimes x \over r^2}))$$ Then the equation above can be written $${\cal F} ( r, |u^\prime |^\alpha u^\prime,   ( |u^\prime |^\alpha u^\prime)^\prime ) = {f \over r^\gamma}$$
 and then using the Ellipticity of $F$ , adapting Lemma 2.1 in \cite{EFQ}, (see also \cite{GPL} ),   there exists ${\cal G}$ locally Lipschitz in $ \R^2$ so that 
 $$( |u^\prime |^\alpha u^\prime )^\prime   = { \cal G} ( |u^\prime|^\alpha u^\prime r^{-1}, {f(r) \over r^\gamma}).$$
  As a consequence the solutions of such equations satisfy 
  $ u\in { \cal C}_{1, \alpha}$.  Furthermore if $u^\prime \neq 0$, we get that $u^{\prime \prime} \in { \cal C}( B(0,1)\setminus\{0\})$, dividing  the equation defining ${ \cal G}$ by $|u^\prime |^\alpha$. This  will  be the case for  an eigenfunction, since we will  observe that it satisfies $u^\prime <0$ in $r>0$. 
     \end{rema} 

 The first results of the present section  are  two  crucial technical lemmata .
    
   \begin{lemme}\label{fabiana1}
Let $ f\in C\left( B(0,1)\setminus \{0\}\right)$ be a radial, bounded and  positive function and assume that  $u\in C_{1, \alpha} $ is  a radial,  bounded function   which satisfies  
\begin{equation}\label{subsol}| \nabla u |^\alpha {\cal M}^+ ( D^2 u) \geq  f   r^{-\gamma}\qquad \hbox{ in } B(0,1)\setminus \{0\}\, .\end{equation}
 Then 
\begin{itemize}
 \item[(i)]  $u^\prime \geq 0$ in a right neighborhood of $0$;
  \item[(ii)] $\displaystyle \lim_{r\rightarrow 0} u^\prime(r) r^{ {\tilde N}_--1} = 0$ and in a right neighborhood of $0$ one has
  $$ u'(r)\geq\left(   (1+\alpha)   \frac{\inf f}{\Lambda ((\tilde N_--1)(1+\alpha) + 1-\gamma)}\right)^{1\over 1+\alpha} r^{1-\gamma\over 1+\alpha}\, ;$$
 \item[(iii)] if,  furthermore, $| \nabla u |^\alpha { \cal M}^+ ( D^2 u) = f r^{-\gamma}$, then, in a   right neighborhood of $0$, one  also has 
                      $$u'(r) \leq \left( (1+\alpha) \frac{\sup f}{\lambda ((N-1)(1+\alpha) +1-\gamma)}\right)^{1\over 1+\alpha}  r^{1-\gamma\over 1+\alpha}\, .$$
In particular, there exists a constant $c>0$ such that, for $r$ sufficiently small,
                       $$|u^\prime (r) | \leq c r^{1 -\gamma\over 1+\alpha}$$
                        and then $u$ can be extended as a  locally Lipschitz continuous function  in $B(0,1)$ if $\gamma \leq 1$,  it belongs to  $C^1(B(0,1))$ if $\gamma <1$, and it is locally H\"older continuous in $B(0,1)$ with exponent ${2+\alpha -\gamma\over 1+\alpha}$ if $\gamma >1$. 
 \end{itemize}
                        \end{lemme}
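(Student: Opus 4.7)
\emph{Plan.} The proof relies on exploiting the ODE satisfied (up to the sign of $u''$) by $v := |u'|^\alpha u'$, which is $C^1$ on $B(0,1)\setminus\{0\}$ by Remark \ref{remC1alpha}. In the radial setting, depending on the signs of $u'$ and $u''$, the expression $|u'|^\alpha {\cal M}^+(D^2 u)$ takes one of four explicit first-order expressions in $v$ and $v'$ with coefficients built from $\Lambda(N-1)$ or $\lambda(N-1)$ (according to the sign of $u'$) and a weight $\Lambda$ or $\lambda$ in front of $v'/(\alpha+1)$ (according to the sign of $u''$). The strategy, for each item, is to consolidate the four subcases into a single usable one-sided ODE inequality whose integrating factor is of the form $r^{(\alpha+1)(\tilde N_\pm - 1)}$ or $r^{(\alpha+1)(N-1)}$.

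\emph{For (i),} I argue by contradiction. Assuming $u'(r_0) < 0$, the subcase $u' < 0$ and $u'' < 0$ produces ${\cal M}^+(D^2 u) \leq 0$, incompatible with the strict positivity of the right-hand side; so $u'' \geq 0$ on $\{u'<0\}$, and monotonicity of $u'$ propagates $u' < 0$ to the whole interval $(0, r_0)$. Writing $w = -u'$, the inequality becomes
\[
(w^{\alpha+1})' + \frac{(\alpha+1)(\tilde N_+ -1)}{r} w^{\alpha+1} \leq -\frac{(\alpha+1)\inf f}{\Lambda\, r^\gamma}.
\]
Integrating against $r^{(\alpha+1)(\tilde N_+ -1)}$ over $(r, r_0)$ yields $w(r) \geq c\, r^{1-\tilde N_+}$ for small $r$; since $\tilde N_+ > 2$, $\int_0^{r_0} w\, dr = +\infty$, contradicting boundedness of $u$.

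\emph{For (ii),} knowing $u' \geq 0$ near $0$, the worst-case consolidation of the two subcases of the sign of $u''$ reads
\[
(\tilde N_- -1)(\alpha+1)\,\frac{v}{r} + v' \geq \frac{(\alpha+1) \inf f}{\Lambda\, r^\gamma}.
\]
Setting $\beta = (\alpha+1)(\tilde N_- - 1)$, this is $(r^\beta v)' \geq \frac{(\alpha+1)\inf f}{\Lambda}\,r^{\beta - \gamma}$, so $r^\beta v(r)$ is increasing and has a nonnegative limit $L$ at $0^+$. A positive $L$ would give $u'(r) \geq c\, r^{1-\tilde N_-}$ and again violate boundedness of $u$ (since $\tilde N_- \geq \tilde N_+ > 2$), so $L=0$; this is exactly $\lim_{r\to 0} u'(r) r^{\tilde N_- - 1} = 0$. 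Integrating the same inequality from $0$ to $r$ then produces the explicit lower bound. For (iii), the \emph{other} uniform consolidation ${\cal M}^+(D^2 u) \geq \lambda \Delta u$ (checked subcase by subcase when $u' \geq 0$), coupled with the equality $|u'|^\alpha {\cal M}^+(D^2 u) = f r^{-\gamma}$, yields $\lambda[(N-1)(\alpha+1) v/r + v'] \leq (\alpha+1) f r^{-\gamma}$. Introducing $V(r) = v(r) - A r^{1-\gamma}$ with $A$ the target constant, a direct computation shows $(r^{(N-1)(\alpha+1)} V)' \leq 0$; the same contradiction with $u$ bounded rules out a positive limit of $r^{(N-1)(\alpha+1)} V$ at $0^+$, forcing $V \leq 0$, which is the claimed upper bound. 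Integration of $|u'(r)| \leq c\, r^{(1-\gamma)/(1+\alpha)}$ then produces the stated Lipschitz / $C^1$ / H\"older regularity according to how $\gamma$ compares to $1$.

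\emph{Main obstacle.} The delicate point is the sign bookkeeping that produces the correct "dimension" in each statement: in (i) and (ii) one packages the bad subcase by the larger constant $\tilde N_\pm$, whereas in (iii) the constant $N$ appears because the lower bound ${\cal M}^+ \geq \lambda \Delta u$ is used in the opposite direction. The last step of each item --- converting the monotonicity of a rescaled quantity into the limit being \emph{zero} by using the boundedness of $u$ --- relies crucially on the standing hypothesis $\tilde N_+ > 2$ through the divergence of $\int_0 r^{1-\tilde N_\pm}\,dr$, together with the restriction $\gamma < 2 + \alpha$ that ensures the relevant exponents $(\alpha+1)(\tilde N_\pm - 1) + 1 - \gamma$ are positive.
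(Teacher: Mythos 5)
Your proof is correct and follows essentially the same route as the paper: reduce the radial fully nonlinear inequality to one-sided linear ODEs for $v=|u'|^\alpha u'$ by sign bookkeeping, apply the integrating factors $r^{(1+\alpha)(\tilde N_\pm-1)}$ (resp.\ $r^{(1+\alpha)(N-1)}$), and rule out a nonzero limit of the rescaled quantity at $0^+$ using the boundedness of $u$ together with $\tilde N_+>2$ and $(1+\alpha)(\tilde N_\pm-1)+1-\gamma>0$. The only differences are cosmetic: in (i) you propagate $u'<0$ backwards via $u''\geq 0$ on $\{u'<0\}$ instead of the paper's Rolle-type exclusion of sign changes, and you supply the details of (iii) (via the barrier $Ar^{1-\gamma}$) which the paper dismisses as ``the same fashion''.
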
 
                        
                      For the convenience of the reading    we define $G_{\Lambda, \lambda}  ( p, q) =  \Lambda (p^++ q^+)-\lambda (p^-+ q^-)$. Note that 
                         $G_{\lambda,\Lambda}$ is subadditive .

                        \begin{proof} 
                        Note that  when $u$ is radial, 
                      $$ | \nabla u |^\alpha {\cal M}^+ ( D^2 u)  =  G_{\lambda, \Lambda} ( (1+\alpha)^{-1} (|u^\prime |^\alpha u^\prime )^\prime, ({ N-1\over r} ) |u^\prime |^\alpha u^\prime )$$  Since $u$ is ${ \cal C}^1$ and $|u^\prime |^\alpha u^\prime$ is ${ \cal C}^1$, suppose that $u^\prime$ changes sign around  $0$, then there exists a numerable set  $r_n$ so that  on $[r_{2n+1}, r_{2n}]$ $u^\prime \leq 0$, and on $[r_{2n+2}, r_{2n+1}[$ $u^\prime \geq 0$. Then there exists $s_n$ in $]r_{2n+1}, r_{2n}[$ so that $(|u^\prime |^\alpha u^\prime )^\prime (s_n) = 0$ a contradiction with the inequation satisfied by $u^\prime$. Then $u^\prime$ does not change sign and if we had $u^\prime \leq 0$ then 
                      $$ \Lambda (|u^\prime |^\alpha u^\prime )^\prime (1+\alpha)^{-1} + \lambda |u^\prime |^\alpha u^\prime ( N-1) r^{-1} \geq  f r^{-\gamma}$$
                       which implies that 
                       $$( |u^\prime |^\alpha u^\prime r^{ (N_+-1)(1+\alpha) } )^\prime \geq (1+\alpha)\Lambda^{-1} f r^{ (N_+-1)(1+\alpha) -\gamma}$$ In particular $ |u^\prime |^\alpha u^\prime r^{ (N_+-1)(1+\alpha) }$ has a limit when $r$ goes to zero. Suppose that this limit is $<0$, then this would  implies  for some positive constant $c$
                       $$u^\prime \leq -c r^{1-N_+}$$ near zero, which contradicts $u$ bounded. Then integrating the equation above one obtains 
                       $$  |u^\prime |^\alpha u^\prime r^{ (N_+-1)(1+\alpha)}\geq 0$$ which yields a contradiction. We have obtained that $u^\prime \geq 0$  and then whatever is the sign of $u^{\prime \prime}$ one has                         

$$ (1+\alpha)^{-1} ( |u^\prime |^\alpha u^\prime )^\prime + { \Lambda(N-1) \over r  \lambda}  |u^\prime |^\alpha u^\prime  \geq f r^{-\gamma} \Lambda^{-1}$$
 which implies that 
 $$ (  |u^\prime |^\alpha u^\prime r^{ (N_--1)(1+\alpha)})^\prime \geq (1+\alpha) \Lambda^{-1} f r^{ -\gamma + (N_--1)(1+\alpha)} $$
Even if it is not necessary for the sequel, let us remark   in particular that  $ |u^\prime |^\alpha u^\prime r^{ (N_--1)(1+\alpha)}$ has a limit when $r$ goes to zero,  and  if the limit was $c>0$ this would  contradict once more that $u$ is bounded. Then $\lim_{r\to 0} |u^\prime |^\alpha u^\prime r^{ (N_--1)(1+\alpha)}=0$.    We obtain by integrating 
   $$  |u^\prime |^\alpha u^\prime r^{ (N_--1)(1+\alpha)}\geq (1+\alpha )\Lambda^{-1} \inf f  r^{ (N_--1)(1+\alpha)+1-\gamma}$$
      which yields the result.

     item (iii)    can be proved in the same fashion.
    
    \end{proof} 
    \begin{rema}\label{strict}
     From the arguments used in the proof of item 1, $f>0$ is sufficient to get $u^\prime >0$ near zero. 
     This will be used in the proof of the uniqueness  in Theorem \ref{exilambda}. . 
     \end{rema}
     \begin{rema}\label{lemmaformm}
 {\rm  By using the change of variable $v = -u$, one gets that if $ f\in C\left( B(0,1)\setminus \{0\}\right)$ is  a  radial, bounded and  positive function and   $u\in C_{1, \alpha} $ is  a  bounded radial function satisfying
$$| \nabla u |^\alpha  { \cal M}^-(D^2 u) \leq -f r^{-\gamma}\quad \hbox{ in } B(0,1)\setminus \{0\}\, ,$$
  then, for $r$ sufficiently small, $u^\prime (r) \leq 0$, 
                         $\lim_{r\to 0} u^\prime (r)r^{{\tilde N_--1}}=0$ and
$$ 
u'(r)\leq    -\frac{\inf f}{\Lambda (({\tilde N}_--1)(1+\alpha) + 1-\gamma)} r^{1-\gamma\over 1+\alpha }\, .
$$
Moreover, if  $| \nabla u |^\alpha { \cal M}^-(D^2 u) = -f r^{-\gamma}\quad \hbox{ in } B(0,1)\setminus \{0\}$, then $|u^\prime (r) | \leq c r^{1-\gamma\over 1+\alpha }$ for a positive constant $c$. Hence  $u$ is locally Lipschitz continuous in $B(0,1)$ for $\gamma\leq 1$, it belongs to $C^1(B(0,1))$ if $\gamma <1$,  and it is locally H\"older continuous in $B(0,1)$ with exponent ${2+\alpha -\gamma\over 1+\alpha}$ for $\gamma >1$. 
 
Obviously, since 
                          ${\cal M}^-\leq { \cal M}^+$, one gets an analogous  conclusion when 
                          $$| \nabla u |^\alpha { \cal M}^+ ( D^2 u) \leq -f r^{-\gamma}.$$}
 \end{rema} 

 \begin{lemme}\label{fabiana}
Let $ f, g\in C\left( B(0,1)\setminus \{0\}\right)$ be  radial, bounded  functions such that $f> g$ and assume that  $u, v \in C_{1, \alpha} \cap {\cal C} ( \overline{B(0,1)})$ are    radial  functions  
 satisfying respectively 
\begin{equation}\label{subsol}
| \nabla u |^\alpha F ( D^2 u) \geq  f   r^{-\gamma} \ {\rm in } B(0,1)\setminus \{0\}
\end{equation}
 and 
 \begin{equation}\label{supersol}
 | \nabla v |^\alpha F ( D^2 v) \leq  g  r^{-\gamma}\qquad \hbox{ in } B(0,1)\setminus \{0\}\, .\end{equation}
 Then 
\begin{itemize}
 \item[(i)]  $u^\prime-v^\prime  \geq 0$ in a right neighborhood of $0$;
  \item[(ii)] In a right neighborhood of $0$ one has
  $$ |u^\prime |^\alpha u^\prime -|v^\prime |^\alpha v^\prime \geq  (1+\alpha)    \frac{\inf (f-g)}{\Lambda ( (N_--1)(1+\alpha) + 1-\gamma)} r^{1-\gamma}\, ;$$
 \end{itemize}
                        \end{lemme}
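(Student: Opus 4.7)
My plan is to mirror the proof of Lemma~\ref{fabiana1}, with the scalar $\psi := |u'|^\alpha u'$ replaced by the difference
\[
\phi(r) := |u'(r)|^\alpha u'(r) - |v'(r)|^\alpha v'(r).
\]
The strategy is to combine the $1$-homogeneity and the ellipticity of $F$ so that \emph{subtracting} the two differential inequalities produces a Pucci-type inequality for $\phi$ of exactly the form analysed in Lemma~\ref{fabiana1}, from which both (i) and (ii) will follow by direct transcription.

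First I would use \eqref{posh} to rewrite $|\nabla u|^\alpha F(D^2 u) = F(|u'|^\alpha D^2 u)$, and likewise for $v$. Setting $M_u := |u'|^\alpha D^2 u$ and $M_v := |v'|^\alpha D^2 v$, the standard ellipticity bound $F(M_u) - F(M_v) \leq {\cal M}^+(M_u - M_v)$ combined with the hypotheses gives
\[
{\cal M}^+(M_u - M_v) \geq (f-g)\, r^{-\gamma} \geq \inf(f-g)\, r^{-\gamma} > 0.
\]
A direct computation then shows that, for radial $u,v$, the matrix $M_u - M_v$ is diagonal in the radial frame with eigenvalues $\phi'/(1+\alpha)$ (simple, radial direction) and $\phi/r$ (multiplicity $N-1$, tangential directions). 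Hence
\[
G_{\lambda,\Lambda}\!\left(\frac{\phi'}{1+\alpha},\, (N-1)\,\frac{\phi}{r}\right) \geq \inf(f-g)\, r^{-\gamma},
\]
which is the analogue, with $\phi$ in place of $\psi$ and $\inf(f-g)$ in place of $f$, of the inequality driving the proof of Lemma~\ref{fabiana1}.

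Next I would apply the sign analysis of Lemma~\ref{fabiana1} to $\phi \in {\cal C}^1(B(0,1)\setminus\{0\})$: if $\phi$ oscillated in sign accumulating at the origin, then on any interval where $\phi \leq 0$ (with $\phi$ vanishing at the endpoints) $\phi$ would achieve an interior minimum $s_n$ with $\phi'(s_n)=0$ and $\phi(s_n)/r \leq 0$, making the left-hand side of the preceding inequality nonpositive at $s_n$ and contradicting the strict positivity of $\inf(f-g)\, r^{-\gamma}$. The case $\phi \leq 0$ near zero would be excluded by the same integration against the $r^{(\tilde N_+ - 1)(1+\alpha)}$ integrating factor performed in Lemma~\ref{fabiana1}, with the boundedness of $u-v$ on $\overline{B(0,1)}$ serving the role of the boundedness of $u$. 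Thus $\phi \geq 0$ near $0$, and since $t \mapsto |t|^\alpha t$ is strictly increasing on $\R$ for $\alpha > -1$, this is equivalent to $u' \geq v'$, yielding (i).

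Finally, for (ii), once $\phi \geq 0$ is established the tangential argument $(N-1)\phi/r$ of $G$ is nonnegative, and the $\tilde N_-$-integrating factor step of Lemma~\ref{fabiana1} would give
\[
\bigl(\phi(r)\, r^{(\tilde N_- - 1)(1+\alpha)}\bigr)' \geq \frac{(1+\alpha)\inf(f-g)}{\Lambda}\, r^{(\tilde N_- - 1)(1+\alpha) - \gamma}.
\]
The assumption $\gamma < \alpha + 2$ gives $(\tilde N_- - 1)(1+\alpha) + 1 - \gamma > 0$, so the right-hand side is integrable near $0$ and $\phi(r)\, r^{(\tilde N_- - 1)(1+\alpha)} \to 0$ as $r \to 0^+$ (again by boundedness of $u-v$); integrating between $0$ and $r$ then produces the stated lower bound. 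I expect the main obstacle to be the reduction step, where $1$-homogeneity of $F$ is essential: without it, the subtracted inequality would not linearise to an expression in the single quantity $\phi$, and one could not directly import the argument of Lemma~\ref{fabiana1}.
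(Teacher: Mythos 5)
Your reduction of the two differential inequalities to a single Pucci-type inequality for $\phi=|u'|^\alpha u'-|v'|^\alpha v'$ is exactly the paper's starting point, and the Rolle-type exclusion of a sign change of $u'-v'$ accumulating at the origin also matches. The genuine gap is in the step where you exclude the case $\phi\leq 0$ near the origin: you claim that ``the boundedness of $u-v$ serves the role of the boundedness of $u$'' in the integrating-factor argument of Lemma \ref{fabiana1}. This is precisely where the analogy breaks down when $\alpha\neq 0$. In Lemma \ref{fabiana1}, the hypothesis $|u'|^\alpha u'\, r^{(\tilde N_+-1)(1+\alpha)}\to -c<0$ is converted, by inverting the monotone map $t\mapsto |t|^\alpha t$, into $u'\leq -c'\,r^{1-\tilde N_+}$, which integrates to contradict the boundedness of $u$. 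For the difference, $\phi(r)\leq -c\,r^{-(\tilde N_+-1)(1+\alpha)}$ yields no pointwise lower bound on $v'-u'$, because $|a|^\alpha a-|b|^\alpha b$ is not comparable to $|a-b|^\alpha(a-b)$: for $\alpha>0$, a very negative $\phi$ is compatible with $u'-v'$ small as long as $|u'|,|v'|$ are large. So you cannot integrate to contradict the boundedness of $u-v$, and one cannot even conclude directly from the boundedness of $u$ and $v$ separately without one-sided control on one of the two derivatives individually.

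This is exactly why the paper's proof is organized in two steps. In the first step the conclusion $\lim_{r\to 0}\phi(r)\, r^{(\tilde N_+-1)(1+\alpha)}\geq 0$ is proved under the extra hypothesis that either $\limsup_{r\to 0}u'r^{\tilde N_+-1}\geq 0$ or $\liminf_{r\to 0}v'r^{\tilde N_+-1}\leq 0$; under such one-sided control a negative limit of $\phi\, r^{(\tilde N_+-1)(1+\alpha)}$ forces one of $u'$, $v'$ individually to dominate $r^{1-\tilde N_+}$, contradicting the boundedness of $u$ or of $v$. In the second step this extra hypothesis is removed by comparing $u$ with the explicit barrier $w=L(1-r^\tau)+u(1)$ (and $v$ with $-L(1-r^\tau)+v(1)$), for which $|\nabla w|^\alpha\mathcal{M}^+(D^2w)\leq -|f|_\infty r^{-\gamma}$ and $w'r^{\tilde N_+-1}\to 0$; applying the first step to the pair $(u,w)$ then yields $\limsup_{r\to 0} u'r^{\tilde N_+-1}\geq 0$, and similarly for $v$. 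Your proposal omits this bootstrap entirely, so as written the proof of (i), and hence of (ii) which depends on it, is incomplete. A minor further point: in (ii) the integration from $0$ does not require $\phi\, r^{(\tilde N_--1)(1+\alpha)}\to 0$ (which you again justify by boundedness of $u-v$); the nonnegativity of $\phi$ obtained in (i) already suffices to discard the boundary term.
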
 
                        \begin{proof}
                                                                                       
                                                Suppose  first that  $u^\prime -v^\prime$ has not a constant sign near zero. Then, there exists a decreasing sequence $\{r_n\}$ converging to 0, such that $u'(r_n)=0$ for all $n$, and  $u^\prime-v^\prime  \leq 0$ in $]r_{2n+1}, r_{2n}[$,    $u^\prime -v^\prime \geq 0$ in $]r_{2n+2}, r_{2n+1}[$. Since $(u^\prime-v^\prime ) (r_{2n}) = (u^\prime-v^\prime ) (r_{2n+1}) = 0$, there exists some $s_{2n}\in ]r_{2n+1}, r_{2n}[$ such that $(|u^{\prime }|^\alpha u^\prime -|v^\prime |^\alpha v^\prime )^\prime  ( s_{2n}) = 0$.  Then we use the inequality 
                                                 \begin{eqnarray}\label{G}
                                                   | \nabla u |^\alpha F( D^2 u)&-&| \nabla v |^\alpha  F ( D^2v)\nonumber\\
                                                   &=&
                                                  F( | \nabla u |^\alpha D^2 u)-F( | \nabla v |^\alpha D^2 v) \nonumber\\
                                                   &\leq&{\cal M}^+( | \nabla u |^\alpha D^2 u-| \nabla v |^\alpha D^2 v)\nonumber \\
                                                   & = & G_{\Lambda, \lambda} \left({( |u^\prime |^\alpha u^\prime -|v^\prime |^\alpha v^\prime ) (N-1)\over r} ,(1+\alpha)^{-1}( |u^\prime |^\alpha u^\prime -|v^\prime |^\alpha v^\prime )^\prime\right) 
                                                   \end{eqnarray}
This yields a  contradiction  since  here $G_{\Lambda, \lambda} \left({( |u^\prime |^\alpha u^\prime -|v^\prime |^\alpha v^\prime ) (N-1)\over r} ,(1+\alpha)^{-1}( |u^\prime |^\alpha u^\prime -|v^\prime |^\alpha v^\prime )^\prime\right) \leq 0$. 
                                                 Suppose now that $u^\prime -v^\prime \leq 0$ in a neighborhood of zero. 
                                                                       Using \eqref{G} above, 
                                                                                                                           $$\Lambda (|u^\prime |^\alpha u^\prime -|v^\prime |^\alpha v^\prime)^\prime  + \lambda  ( |u^\prime |^\alpha u^\prime -|v^\prime |^\alpha v^\prime ){ (N-1)\over r} \geq (f-g) r^{-\gamma}.$$
                                                     In particular 
                                                    $$ ( ( |u^\prime |^\alpha u^\prime -|v^\prime |^\alpha v^\prime ) r^{(1+\alpha)  (\tilde N+-1)} )^\prime \geq (1+\alpha)   (f-g) \Lambda^{-1}   r^{(1+\alpha)  (\tilde N_--1)-\gamma} $$
                                                    Since $f-g>0$, $( |u^\prime |^\alpha u^\prime -|v^\prime |^\alpha v^\prime ) r^{(1+\alpha)  (\tilde N+-1)} $ has a limit when $r$ goes to zero. We prove by contradiction that this limit is $\geq 0$.

                                                  In order to obtain this,   we  make an additional assumption which will be  deleted in a second step 
                                                    
                                                     {\bf First step} 
                                                    
                                                    Suppose that either  $\limsup _{r\rightarrow 0}u^\prime r^{ \tilde N+-1} \geq 0$, or   $\liminf _{r\rightarrow 0}v^\prime r^{ \tilde N+-1} \leq 0$. 
                                                     And assume by contradiction that
                                                    $$ ( |u^\prime |^\alpha u^\prime -|v^\prime |^\alpha v^\prime ) r^{(1+\alpha)  (\tilde N+-1)} \rightarrow -c<0.$$

                                                 - if $\limsup u^\prime  r^{\tilde N_+-1} \geq  0$, one derives that 
                                                      $ |v^\prime |^\alpha v^\prime \geq c  r^{( -N_++1) (1+\alpha) } $, and  then 
                                                      $ v^\prime \geq  c r^{(-N_++1)  } $ a contradiction with the  boundedness of $v$. 
                                                      
                                                      - If $\liminf v^\prime  r^{ \tilde N_+-1}\leq 0$, 
                                                      $$ |u^\prime |^\alpha u^\prime \leq -c  r^{( -N_++1) (1+\alpha) } $$ once more a contradiction with the boundedness of $u$. 
                                                      We have obtained that   $(|u^\prime |^\alpha u^\prime -|v^\prime |^\alpha v^\prime )r^{(1+\alpha)  (\tilde N+-1)} )\geq 0$, hence 
                                                      $u^\prime -v^\prime \geq 0$ and whatever is the sign of $(|u^\prime|^\alpha u^\prime )^\prime-(|v^\prime|^\alpha    v^\prime )^\prime$ one has 
                                                      
                                                       $$ ( ( |u^\prime |^\alpha u^\prime -|v^\prime |^\alpha v^\prime ) r^{(1+\alpha)  (\tilde N_--1)})^\prime \geq (1+\alpha ) (f-g) r^{ N_--1)(1+\alpha) -\gamma} \Lambda^{-1}$$
                                                       This implies by  integrating that 
                                                       
                                                       $$( |u^\prime|^\alpha u^\prime - |v^\prime |^\alpha v^\prime ) (r) \geq (1+\alpha) \Lambda^{-1}{ \inf( f-g) r^{1-\gamma} \over( N_--1)(1+\alpha)+1-\gamma}.$$
                                                       
                                                     {\bf Second step} 
                                                      
                                                      We use the previous step to compare $u$ with $w$ defined as follows  : 
                                                            For $0<\tau\leq\inf ( { \alpha+2-\gamma\over \alpha +1}, N_+)$, let us consider the radial function
 \begin{equation}\label{w}
 w(r)=L(1-r^\tau) +u(1)
 \end{equation} 
 where $L>0$ is a constant to be suitably chosen. 
 A  direct computation shows that
$$
\mathcal{M}^+(D^2w)\leq L \tau \Lambda ( |\tau-1| - (\tilde{N}_+-1) ) r^{\tau-2}\, .
$$
 so that
$$
\mathcal{M}^+(D^2w)\leq -CLr^{\tau-2}\qquad \hbox{ in } B(0,1)\setminus \{0\}
$$
with  $C=\tau \Lambda ( N_+-1-|\tau -1|)>0$.

 Then $$ | \nabla w |^\alpha \mathcal{M}^+ ( D^2 w) \leq -C(L\tau)^{ \alpha} L r^{\tau-2+ ( \tau-1)( \alpha)} \leq -|f|_\infty  r^{-\gamma} $$
  choosing conveniently $L$.  Note that  since $\gamma < \alpha+2 $, $\tau-1+ \tilde N_+-1>0$, and then $\lim w^\prime r^{ \tilde N_+-1} = 0$.  We can apply the first step and obtain that  for some positive constant $c$
   $$|u^\prime |^\alpha u^\prime (r) -|w^\prime|^\alpha w^\prime  (r) \geq  c r^{ 1-\gamma}$$
     this yields 
    $$| u^\prime|^\alpha u^\prime   \geq -\tau^{1+\alpha}  L^{1+\alpha} r^{( \tau-1)(1+\alpha) } + c r^{1-\gamma} $$
    In particular  using $0<\tilde N_+-2 < \tau-1+ \tilde N_+-1$
      one has 
    $ \limsup _{r\to 0} u^\prime (r) r^{ \tilde N_+-1} \geq  0$. 
    
    In the same manner if $v$ satisfies 
    $$ | \nabla v |^\alpha F( D^2 v) \leq g r^{-\gamma}$$
     Taking  $w = -L( 1-r^\tau) + v(1)$, as soon as $L$ is large enough,   $| \nabla w |^\alpha { \cal M}^-( D^2 w) \geq |g|_\infty r^{-\gamma}$. Using the first step  one gets $|v^\prime|^\alpha v^\prime  \leq |w^\prime|^\alpha w^\prime -c r^{1-\gamma}$, hence 
     $\liminf _{r\rightarrow 0}v^\prime r^{ \tilde N+-1} \leq 0$.  
    
As a consequence the result in the  first step extends to any $u$ and $v$ without  the  assumption  $ \limsup _{r\to 0} u^\prime (r) r^{ \tilde N_+-1} \geq  0$, or  $\liminf v^\prime  r^{ \tilde N_+-1}\leq 0$,  and items  (i) and (ii) are proved. 

\end{proof} 
 \begin{theo}\label{compa}
Let $f, g\in C\left(B(0,1)\right)$ be  radial functions and assume that $u, v\in C\left( \overline{B(0,1)}\right) \cap C_{1, \alpha} $,   are radial functions satisfying in $B(0,1)\setminus \{0\}$
 $$
 \begin{array}{c}                 
| \nabla u |^\alpha F( D^2 u) -\beta|u|^\alpha  u(r) r^{-\gamma} \geq f(r)r^{-\gamma} \\[2ex]
| \nabla v |^\alpha F( D^2 v) -\beta |v|^\alpha v(r) r^{-\gamma}  \leq g(r) r^{-\gamma}
\end{array}
$$
with $\beta\geq 0$ and   $f\geq g$ in $B(0,1)$, one of the inequalities being strict. Then  $u\leq v$ on $\partial B(0,1)$ implies $u\leq v$ in 
$\overline{B(0,1)}$.
 \end{theo}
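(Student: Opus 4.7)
The strategy is the classical contradiction argument for comparison theorems. I would suppose that $M := \max_{\overline{B(0,1)}}(u-v) > 0$ and derive a contradiction. Since $u \leq v$ on $\partial B(0,1)$, the maximum is attained at some $r_0 \in [0,1)$.

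The core step handles the case $r_0 \in (0,1)$. At a radial maximum of $u-v$, one has $u^\prime(r_0) = v^\prime(r_0)$ and $u^{\prime\prime}(r_0) \leq v^{\prime\prime}(r_0)$, and the formula for the Hessian of a radial function then gives $D^2 u(r_0) \leq D^2 v(r_0)$ in matrix order. The positive $1$-homogeneity of $F$ allows one to write $|\nabla u|^\alpha F(D^2 u) = F(|\nabla u|^\alpha D^2 u)$; combining this with uniform ellipticity and the identity $|u^\prime(r_0)|^\alpha = |v^\prime(r_0)|^\alpha$ (the degenerate case $u^\prime(r_0)=0$ being handled via the reformulation of Remark \ref{remC1alpha}) yields $|\nabla u|^\alpha F(D^2 u)(r_0) \leq |\nabla v|^\alpha F(D^2 v)(r_0)$. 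Subtracting the two differential inequalities at $r_0$ and using $f(r_0) \geq g(r_0)$, $u(r_0) > v(r_0)$, and the strict monotonicity of $s \mapsto |s|^\alpha s$ for $\alpha > -1$, the strict form of one of the two inequalities forces
$$0 \leq (f-g)(r_0) + \beta\,(|u|^\alpha u - |v|^\alpha v)(r_0) < 0,$$
a contradiction.

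It remains to exclude that $M$ is attained only at $r_0 = 0$. Here I would invoke Lemma \ref{fabiana} applied to $u$ and $v$ with the shifted sources $\tilde f := f + \beta |u|^\alpha u$ and $\tilde g := g + \beta |v|^\alpha v$. Continuity together with $(u-v)(0) = M > 0$ forces $u > v$ in a right neighborhood of $0$, hence $\tilde f \geq \tilde g$ there; the strict form of one of the two original differential inequalities then supplies the strictness required by the proof of Lemma \ref{fabiana} after subtraction. The resulting conclusion $u^\prime - v^\prime \geq 0$ in a right neighborhood of $0$ forces $u-v$ to be non-decreasing there, so in fact $u-v \equiv M$ on some interval $[0, r^*]$ with $r^* > 0$, and one returns to the interior case at $r_0 = r^*$.

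The main obstacle is the singular point $r=0$: because of the singular potential $r^{-\gamma}$ and the degenerate/singular coefficient $|\nabla u|^\alpha$, a classical maximum principle argument does not by itself preclude that the maximum of $u-v$ be attained only at the puncture, where neither $u$ nor $v$ need satisfy any pointwise differential inequality. Lemma \ref{fabiana} is precisely the technical device that transfers strictness of the differential inequality into a sign for $u^\prime - v^\prime$ near $0$, pushing the maximum into $(0,1)$ where the radial second-order argument closes the loop.
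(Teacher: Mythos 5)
Your proof is correct, but the interior-maximum step takes a genuinely different route from the paper's. The paper does not argue pointwise at the interior maximum $\bar r$: it shifts $v$ upward by (slightly less than) $\max(u-v)$, observes that $u$ lies strictly below the shifted function on $\{r=\delta\}$ for small $\delta$ precisely because the maximum is not attained at the origin, and then invokes the comparison principle of \cite{BD1} on the annulus $B(0,1)\setminus\overline{B(0,\delta)}$, where the coefficient $r^{-\gamma}$ is regular. Your pointwise evaluation at $r_0$ --- $u'(r_0)=v'(r_0)$, second-order test, hence $|\nabla u|^\alpha F(D^2u)(r_0)\le |\nabla v|^\alpha F(D^2v)(r_0)$ --- is essentially Remark \ref{maxu-v}, which the paper proves and deploys in Theorem \ref{maxpgamma} but not here; it is a legitimate alternative and more self-contained, since it avoids outsourcing the annulus to \cite{BD1}. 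Two caveats: when $u'(r_0)=0$ and $\alpha<0$ the matrix inequality $D^2u(r_0)\le D^2v(r_0)$ is not available ($u$ need not be twice differentiable there), so the argument must be run entirely on $(|u'|^\alpha u')'$ as in Remark \ref{maxu-v} rather than on the Hessians, which is what your parenthetical gestures at; and your displayed chain should read $0<\dots\le 0$ rather than $0\le\dots<0$, since the hypotheses give strict positivity while the equations give nonpositivity. Both proofs treat the origin identically via Lemma \ref{fabiana}; your detour through showing $u-v\equiv M$ on $[0,r^*]$ and then returning to the interior case is sound but unnecessary, because part (ii) of Lemma \ref{fabiana} already yields $|u'|^\alpha u'-|v'|^\alpha v'\ge c\,r^{1-\gamma}>0$, i.e.\ $u-v$ strictly increasing on a right neighborhood of $0$, which contradicts the maximum at $0$ outright --- this is exactly what the paper does.
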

              
\begin{proof} 
  We           
suppose by contradiction that 
$$\max_{\overline{B(0,1)}} (u-v) >0\, .
$$
If the maximum is achieved at 0, then $(u-v)(0) >0$ and, by the assumptions on $f$, $g$ and $\beta$, there exist $\delta>0$ and  a neighborhood on the right of $0$ on which 
               $$ \beta (|u|^\alpha u (r)-|v|^\alpha v (r) )r^{-\gamma}+(f(r)-g(r))r^{-\gamma} \geq \delta r^{-\gamma} >0$$
Using  Lemma \ref{fabiana},   one gets that  for some positive constant $c$, $|u^\prime|^\alpha  u^\prime -|v^\prime|^\alpha v^\prime  \geq  c  r^{1-\gamma}$, which contradicts the fact that   $u-v$ attains its maximum  at $0$. 
Hence, there exists $0<\bar r<1$ such that  $u(\bar r)-v(\bar r)=\max (u-v)$. Then for  $\epsilon >0$ there exists $\delta <\bar r$ that we can take small enough in order that $u( r) < v( r)+ \max ( u-v)-\epsilon$, for any  $r < \delta$.   Then  $w = v + \max ( u-v)$  satisfies 
$$ | \nabla w |^\alpha F( D^2 w)-\beta |w|^\alpha w r^{-\gamma} \leq  g(r)r^{-\gamma} $$
 Applying the comparison principle  in \cite{BD1} in $r> \delta$ for regular coefficients,  one gets that $u\leq w$ in $r> \delta$, and then $u-v\leq \sup (u-v)-\epsilon$ in the whole ball, a contradiction .                   \end{proof}
             
Next, we have the following existence, uniqueness and regularity  result.
                              
\begin{theo}\label{exi1}
              Let $f \in C (B(0,1))$ be a radial, bounded function. Let further $F$ be a rotationally invariant  operator satisfying \eqref{posh}, \eqref{FNL} . For $\beta \geq 0$ and $b\in \R$ 
   there exists a (unique when $\beta >0$ or when $f$ has a sign and $\beta = 0$) bounded radial function $u\in C(\overline {B(0,1)})\cap {\cal C}_{1, \alpha} $  satisfying
 \begin{equation}\label{dp}
 \left\{\begin{array}{cc}
            | \nabla u |^\alpha    F( D^2 u) -\beta |u|^\alpha { u r^{-\gamma}}  =  r^{-\gamma}   f(r) & \hbox{ in } \ B(0,1)\setminus \{0\}\\
                u (1)= b .& 
                \end{array}\right.
 \end{equation}               
Moreover, $u$ can be extended up to $\overline{B(0,1)}$, and one has: $u\in C^1(\overline{B(0,1)})$ if $\gamma<1$, $u$ is Lipschitz continuous in $\overline{B(0,1)}$ if $\gamma \leq 1$, $u$ is H\"older continuous in  $\overline{B(0,1)}$ with exponent ${2+\alpha -\gamma\over 1+\alpha} $ if $\gamma > 1$.            
              \end{theo}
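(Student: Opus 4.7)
I approximate on annuli $A_\delta = B(0,1)\setminus \overline{B(0,\delta)}$, obtain $\delta$-independent bounds through the already established Lemma \ref{fabiana1}, pass to the limit in $\delta \to 0$, and conclude uniqueness from Theorem \ref{compa}.

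\emph{Step 1: Approximation on the annulus.} For each $0 < \delta < 1$, I first solve the two-point boundary value problem
\begin{equation*}
\begin{cases}
|u_\delta'|^\alpha F(D^2 u_\delta) - \beta |u_\delta|^\alpha u_\delta\, r^{-\gamma} = f(r)\, r^{-\gamma} & \text{in } A_\delta, \\
u_\delta'(\delta) = 0, \quad u_\delta(1) = b.
\end{cases}
\end{equation*}
On $[\delta, 1]$ the weight $r^{-\gamma}$ is bounded, and by Remark \ref{remC1alpha} the radial equation reduces to a first order system in $(u,v)$ with $v = |u'|^\alpha u'$ and locally Lipschitz right-hand side. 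I handle this BVP by shooting on the initial value $a = u_\delta(\delta)$: the IVP with $(u(\delta), v(\delta)) = (a, 0)$ is locally solvable, the map $a \mapsto u_a(1)$ is continuous and, using the comparison principle of \cite{BD1} on the regular annulus, monotone and surjective onto $\R$, so some $a$ yields $u_a(1) = b$. I extend $u_\delta$ to $[0,1]$ by the constant $u_\delta(\delta)$ for $r \leq \delta$.

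\emph{Step 2: Uniform estimates and limit.} An a priori bound $|u_\delta| \leq M$ independent of $\delta$ is produced by comparing $u_\delta$ with the explicit barriers $w_\pm(r) = b \mp K(1 - r^\tau)$ for $\tau \in \bigl(0, \min\{(2+\alpha-\gamma)/(1+\alpha),\tilde N_+\}\bigr)$ and $K$ large, exactly as in the construction \eqref{w} in the proof of Lemma \ref{fabiana}; the admissibility of such barriers hinges on $\gamma < 2 + \alpha$. Once $|u_\delta|\leq M$ is secured, the source term is dominated by $(|f|_\infty + \beta M^{1+\alpha}) r^{-\gamma}$, and the integration argument of Lemma \ref{fabiana1}(iii) (carried out from $r = \delta$, where $u_\delta' (\delta)= 0$, rather than from the origin) gives
$$|u_\delta'(r)| \leq c\, r^{(1-\gamma)/(1+\alpha)}, \qquad r \in [\delta, 1],$$
with $c$ independent of $\delta$. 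From the ODE, $(|u_\delta'|^\alpha u_\delta')'$ is then uniformly bounded on compact subsets of $(0, 1]$; Arzelà--Ascoli plus a diagonal extraction yield a subsequence $u_{\delta_n}$ converging in ${\cal C}^1_{\mathrm{loc}}((0,1])$ to a radial function $u \in {\cal C}_{1,\alpha}$ solving the equation with $u(1) = b$. The gradient estimate survives in the limit and, integrated once more, delivers the three regularity alternatives announced in the statement (Lipschitz for $\gamma \leq 1$, ${\cal C}^1$ for $\gamma < 1$, H\"older of exponent $(2+\alpha-\gamma)/(1+\alpha)$ for $\gamma > 1$).

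\emph{Step 3: Uniqueness.} Theorem \ref{compa} cannot be used directly between two solutions of the same equation, since it requires a strict inequality. When $\beta>0$, I let $u_1^\epsilon$ solve the equation with $f$ replaced by $f+\epsilon$ (existence by Step 1--2, applied for this new source); the strict version of Theorem \ref{compa} yields $u_1^\epsilon \geq u_2$, and continuity $u_1^\epsilon \to u_1$ as $\epsilon\to 0^+$ (again by comparison with $\epsilon' > \epsilon$) gives $u_1 \geq u_2$; the reverse inequality is symmetric. When $\beta=0$ and $f$ has a fixed sign, the $(1+\alpha)$-homogeneity of $|\nabla u|^\alpha F(D^2 u)$ lets me replace $u_1$ by $\lambda u_1$ with $\lambda > 1$ slightly; the sign of $f$ turns this into a strict super- or subsolution, and Theorem \ref{compa} followed by $\lambda \to 1^+$ closes the argument. \emph{The main obstacle} is the simultaneous control of $|u_\delta|_\infty$ and of $|u_\delta'(r)|$ near the origin, uniformly in $\delta$; the gradient estimate is where the restriction $\gamma < 2+\alpha$ enters decisively, via the positivity of $(\tilde N_--1)(1+\alpha) + 1 - \gamma$ in the integration of Lemma \ref{fabiana1}.
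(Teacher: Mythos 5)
Your overall strategy (approximation on annuli with a Neumann condition at $r=\delta$, uniform barrier and gradient bounds, compactness, then comparison) is a genuinely different route from the paper, which regularizes the potential instead, replacing $r^{-\gamma}$ by $(r^2+1/n)^{-\gamma/2}$ on the whole ball, solves each regularized problem by \cite{BD1}, gets the uniform sup bound from the barrier \eqref{w}, and obtains the gradient estimate by comparing with a solution $\bar w$ of an extremal equation through Lemma \ref{fabiana} and Remark \ref{lemmaformm}. However, as written your proof has genuine gaps. The main one is Step 1: the assertion that the shooting map $a\mapsto u_a(1)$ is well defined, continuous, monotone and surjective is precisely the hard part of solving the mixed Neumann--Dirichlet problem on $A_\delta$, and nothing you cite covers it. The comparison principle of \cite{BD1} is a Dirichlet result and does not yield monotonicity of a shooting map with Neumann data at the inner boundary; the first order system has $u'=\mathrm{sign}(v)|v|^{1/(1+\alpha)}$, which is not Lipschitz at $v=0$ when $\alpha>0$, and zero order term $|u|^\alpha u$, not Lipschitz at $u=0$ when $\alpha<0$, so the IVP need not be unique and $a\mapsto u_a(1)$ need not even be single valued; global existence of the trajectory up to $r=1$ is also taken for granted. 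The same issue reappears in your sup bound: comparing $u_\delta$ with $w_\pm$ \lq\lq by the comparison principle of \cite{BD1}\rq\rq\ on the annulus requires Dirichlet data on both boundary components, while at $r=\delta$ you only have $u_\delta'(\delta)=0$, so you need the (easy, but missing) remark that the sign of $(u_\delta-w_\pm)'(\delta)$ excludes an extremum at $r=\delta$. (By contrast, your gradient estimate integrated from $r=\delta$ is essentially sound, provided you integrate on maximal intervals where $u_\delta'$ has constant sign, whose left endpoints carry $|u_\delta'|^\alpha u_\delta'=0$.)

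Step 3 also has problems. For $\beta>0$ the argument is circular: comparison in $\epsilon$ gives monotonicity of $u_1^\epsilon$ and hence convergence to \emph{some} solution of the limit problem, but identifying that limit with the given solution $u_1$ is exactly the uniqueness being proved. A correct route is to observe that when $\beta>0$ the strictness required in Theorem \ref{compa} is supplied by the zero-order term itself at a positive maximum of $u_1-u_2$ (this is why the paper calls this case obvious), or to compare $u_2$ with $u_1+\epsilon$, which is a strict supersolution since $t\mapsto |t|^\alpha t$ is increasing. For $\beta=0$ and signed $f$, replacing $u_1$ by $\lambda u_1$ with $\lambda>1$ destroys the boundary matching whenever $b\neq 0$ (you would need $b\le \lambda b$, false for $b<0$); the paper's dilation is taken about the boundary value, $u_\epsilon=(u_1-b)(1+\epsilon)+b$, which keeps $u_\epsilon(1)=b$ while still multiplying the source by $(1+\epsilon)^{1+\alpha}$, and this is the version you need.
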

 \begin{proof} 
 For every $n\in \N$ let us introduce the regularized Dirichlet boundary value problem
 \begin{equation}\label{dpapp}
 \left\{\begin{array}{cl}
| \nabla u_n |^\alpha  F( D^2 u_n) -\beta{ |u_n|^\alpha u_n (r^2+1/n)^{-\gamma/2}}  =  (r^2+1/n)^{-\gamma/2}   f(r) & \hbox{ in } \ B(0,1)\\
                u_n(1) = b &
                \end{array}\right.
 \end{equation}  
 which, by  \cite{BD1}, has a unique  radial solution $u_n\in C(\overline{B(0,1)})$. 
 For $0<\tau\leq 2-\gamma$, let us consider the radial function in \eqref{w}  replacing $u(1)$ by $b$ 
 where always $L>0$ is a constant  suitably chosen, and $\tau$ satisfies $\tau \leq \inf ( { \alpha+2-\gamma\over \alpha +1}, N_+)$. Then $$ | \nabla w |^\alpha \mathcal{M}^+ ( D^2 w) \leq -|f|_\infty  r^{-\gamma} \leq - (r^2+1/n)^{-\gamma/2}   f(r) .$$
   
  The standard comparison principle in \cite{BD1}  then implies that the sequence $\{ u_n\}$ is uniformly bounded in $C(\overline{B(0,1)})$.  Hence,  it is locally uniformly bounded in $C_{1, \alpha}$ , in particular uiformly Holder continuous, and, up to a subsequence, it is converging locally uniformly in $\overline{B(0,1)}\setminus \{0\}$ to a radial solution $u$ of problem \eqref{dp}, which is a globally  continuous  function belonging  to $C_{1, \alpha}$. 
  
  Let us now show that the constructed bounded radial solution $u$ satisfies the further regularity announced. We take now $\bar w$  a  solution of $$
\left\{
\begin{array}{cl}
| \nabla \bar w |^\alpha  {\cal M}^+ ( D^2 \bar w) = -(\|f\|_\infty+B+1) r^{-\gamma} & \quad \hbox{ in } B(0,1)\setminus \{0\}\\
 \bar w (1)= u(1) &
 \end{array}
 \right.$$
where $B>0$ is a constant such that $\beta |u|^{1+\alpha} \leq B$ in $B(0,1)$.  Lemma \ref{fabiana} and Remark \ref{lemmaformm} applied to $\bar w$ yield that 
                 $$ | \bar w^\prime| \leq c r^{1-\gamma\over 1+\alpha}$$
for some $c>0$.                  
 As a consequence, we have 
                   by Lemma \ref{fabiana} (i),   in a right neighborhood 
                   $(u-\bar w)^\prime(r) \geq   0\, .$
Hence, 
$$u^\prime(r) \geq -c  r^{1-\gamma\over 1+\alpha}$$
 for $r$ small enough. Analogously, we have
 $$
{\cal M}^- ( | \nabla u|^\alpha D^2 u+|\nabla \bar w |^\alpha  D^2 \bar w) \leq | \nabla u |^\alpha F( D^2 u) +| \nabla \bar w |^\alpha  { \cal M}^+( D^2 (\bar w) )\leq - r^{-\gamma}\qquad \hbox{ in } B(0,1)\setminus \{0\}$$
 which implies, by Remark \ref{lemmaformm}, 
  $$u^\prime(r) \leq -{\bar w}^\prime (r) \leq  c  r^{1-\gamma\over 1+\alpha }$$
 for $r$ small enough.  Arguing as in the proof of Lemma \ref{fabiana}, from the estimate $|u'(r)|\leq c r^{1-\gamma\over 1+\alpha }$ for $r$ sufficiently small, we deduce that $u$ is  Lipschitz continuous in $\overline{B(0,1)}$ if $\gamma \leq 1$,  it belongs to  $C^1(\overline{B(0,1)})$ if $\gamma <1$, and it is  H\"older continuous in $\overline{B(0,1)}$ with exponent ${2+\alpha -\gamma\over 1+\alpha}$ if $\gamma >1$.

The uniqueness is obvious when $\beta >0$. If $f$ has a sign  and $\beta = 0$ for example $f< 0$, then 
 if $u$ and $v$ are two solutions of 
$$ \left\{\begin{array}{lc} | \nabla u |^\alpha    F( D^2 u) =  r^{-\gamma}   f(r) & \hbox{ in } \ B(0,1)\setminus \{0\}\\
                u (1)= b&,
                \end{array}\right.$$
                 $u_\epsilon =( u-b) (1+\epsilon)+ b$ satisfies  $u_\epsilon (1) = b$, and 
                 $$ | \nabla u_\epsilon |^\alpha F( D^2 u_\epsilon)= f(r) (1+\epsilon)^{1+\alpha} r^{-\gamma} < {f(r) \over r^\gamma}$$
                  and then 
                  $u_\epsilon \geq v$. By  passing to the limit one obtains $u\geq v$. Exchanging the roles of $u$ and $v$ one gets the uniqueness. 

\end{proof}

The argument used in the above proof yields also the following  compactness result :                    
 \begin{theo}\label{compact}
 Let $\{u_n\}_n$ be a  uniformly bounded sequence of radial functions belonging to $C_{1, \alpha} $ and satisfying
              $$ | \nabla u_n|^\alpha F( D^2 u_n) = { f_nr^{-\gamma} }\qquad \hbox{ in } B(0,1)\setminus \{0\}\, ,$$ 
               where  the  $f_n$  are radial, bounded and  continuous in $B(0,1)$.
                If  $\{f_n\}_n$ is uniformly bounded, then $\{u_n\}_n$ is  equicontinuous, thus uniformly converging in $\overline{B(0,1)}$ up to a subsequence. 
                If $f_n$ is uniformly converging to $f\in C(\overline{B(0,1)})$, then,  up to a subsequence, $\{u_n\}_n$ is  uniformly converging to  a    radial solution $u\in C_{1, \alpha}\cap C(\overline{B(0,1)})$ of 
              $$| \nabla u |^\alpha F( D^2 u) = { r^{-\gamma} f }\qquad  \hbox{ in } B(0,1)\setminus \{0\}\, .$$
              \end{theo}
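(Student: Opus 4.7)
The plan is to produce a uniform modulus of continuity for the family $\{u_n\}$ on $\overline{B(0,1)}$ by handling the behavior near the origin and away from it separately, and then to apply Arzelà–Ascoli. Near $0$, I would reuse the barrier construction from the proof of Theorem \ref{exi1}: pick $\tau\in(0,\inf(\tfrac{\alpha+2-\gamma}{\alpha+1},\tilde N_+)]$ and set $\bar w_n(r)=L(1-r^\tau)+u_n(1)$. Because $\|u_n\|_\infty$ and $\|f_n\|_\infty$ are both uniformly bounded by hypothesis, the constant $L$ can be chosen \emph{independently of $n$} so that $|\nabla\bar w_n|^\alpha \mathcal{M}^+(D^2\bar w_n)\leq -\|f_n\|_\infty r^{-\gamma}$ on $B(0,1)\setminus\{0\}$ and $u_n\leq \bar w_n$ on $\partial B(0,1)$. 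Lemma \ref{fabiana} applied to $(u_n,\bar w_n)$ together with its symmetric counterpart (Remark \ref{lemmaformm}) then yields $|u_n'(r)|\leq c\, r^{(1-\gamma)/(1+\alpha)}$ on some fixed interval $(0,r_0]$, with $c$ and $r_0$ independent of $n$; integrating gives the corresponding uniform Lipschitz, $C^1$, or H\"older modulus at $0$ according to the value of $\gamma$.

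Away from the origin, I would invoke the ODE reformulation recalled in Remark \ref{remC1alpha}: on $B(0,1)\setminus\overline{B(0,\delta)}$ the radial function $v_n:=|u_n'|^\alpha u_n'$ satisfies $v_n'=\mathcal{G}(v_n/r,f_n(r)/r^\gamma)$ with $\mathcal{G}$ locally Lipschitz in $\R^2$. Combined with the uniform $L^\infty$ bound on $u_n$ (which controls $v_n$ on $[\delta,1]$ via integration of this ODE starting from the uniform estimate near $0$ established in the previous paragraph) and on $f_n/r^\gamma$ for $r\geq\delta$, one obtains uniform bounds for $v_n$ and $v_n'$, hence a uniform $C^1$ bound for $u_n$ on every compact subset of $B(0,1)\setminus\{0\}$. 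Patching this with the estimate from the barrier argument gives equicontinuity of $\{u_n\}$ on $\overline{B(0,1)}$, and Arzelà–Ascoli produces a subsequence converging uniformly to some radial $u\in C(\overline{B(0,1)})$, which is the first statement.

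For the second statement, when $f_n\to f$ uniformly, the uniform bounds on $v_n$ and $v_n'$ on compacts of $B(0,1)\setminus\{0\}$ allow, by a further diagonal extraction, to upgrade the convergence to $u_n\to u$ in $C^1_{loc}(B(0,1)\setminus\{0\})$ and $v_n\to |u'|^\alpha u'$ in $C^1_{loc}(B(0,1)\setminus\{0\})$. Passing to the limit in the ODE for $v_n$ (or equivalently invoking stability of viscosity solutions under uniform convergence of the right-hand side, as in \cite{BD1}) shows that $u\in C_{1,\alpha}\cap C(\overline{B(0,1)})$ and $|\nabla u|^\alpha F(D^2 u)=f\,r^{-\gamma}$ in $B(0,1)\setminus\{0\}$. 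The only point that requires a bit of care is checking that the constant $L$ in the barrier is genuinely $n$-independent; this is transparent from the a priori bounds assumed on $u_n$ and $f_n$, so the argument amounts to verifying that the proof of Theorem \ref{exi1} carries a quantitative dependence on the data.
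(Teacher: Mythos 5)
Your proposal is correct and follows essentially the paper's own route: the paper disposes of Theorem \ref{compact} by remarking that the argument of Theorem \ref{exi1} applies verbatim, i.e.\ $n$-independent gradient bounds near the origin obtained from a barrier through Lemma \ref{fabiana} and Remark \ref{lemmaformm}, interior control coming from the radial ODE structure of Remark \ref{remC1alpha}, and then Arzel\`a--Ascoli together with stability of the equation under uniform convergence of $u_n$ and $f_n$. The only cosmetic difference is that you use the explicit barrier $L(1-r^\tau)+u_n(1)$ directly, whereas the paper's regularity step in Theorem \ref{exi1} passes through an auxiliary exact solution $\bar w$ of the $\mathcal{M}^+$ equation with constant right-hand side; both yield the same uniform modulus of continuity.
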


In the next results, we prove several properties of the \lq\lq smooth" eigenvalue defined in \eqref{lambdaprime}, that we recall here for the comfort of the reader
$$
\bar \lambda_\gamma' \, := \sup \{\mu\, : \  \exists\,   
        u \in {\cal C}_{1, \alpha}  ,\ u>0 \hbox{ in } B(0,1)\setminus \{0\}, \ u \hbox{ radial},   \ \ | \nabla u |^\alpha F (D^2 u) + \mu { u^{1+\alpha}  \over r^\gamma} \leq 0\}
 \,   . $$  
In Subsection 2.3 we will prove in fact that $\bar \lambda_\gamma'=\bar \lambda_\gamma$.

Let us start by proving  the validity of the maximum principle  below the value $\bar \lambda_\gamma'$.

\begin{theo}\label{maxpgamma}
 Let    $\mu < \bar \lambda_\gamma'$    and suppose that $u\in C(\overline{B(0,1)})\cap {\cal C}_{1, \alpha} $ is  a radial function satisfying 
 $$| \nabla u |^\alpha F(D^2 u) + \mu |u|^\alpha ur^{-\gamma} \geq 0 \qquad \hbox{ in } \ B(0,1)\setminus \{0\}\, .  $$
 If $u(1)\leq 0$,    then $u\leq 0$ in $\overline{B(0,1)}$.
                                   \end{theo}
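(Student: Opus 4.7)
Proof plan. I argue by contradiction, assuming $M := \max_{\overline{B(0,1)}} u > 0$. The scheme is the Berestycki--Nirenberg--Varadhan sliding method, adapted to the singular quasilinear setting: I slide a positive strict supersolution against $u$ until first contact and then exploit the strict gap between $\mu$ and $\mu'<\bar\lambda_\gamma'$.

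Since $\mu<\bar\lambda_\gamma'$, I pick $\mu'\in(\mu,\bar\lambda_\gamma')$ and, by \eqref{lambdaprime}, a radial $\varphi\in\mathcal{C}_{1,\alpha}$, strictly positive in $B(0,1)\setminus\{0\}$, with $|\nabla\varphi|^\alpha F(D^2\varphi)+\mu'\varphi^{1+\alpha}r^{-\gamma}\le 0$. The $(1+\alpha)$-homogeneity of the operator ensures that every dilate $t\varphi$ $(t>0)$ is still a supersolution for the same $\mu'$. Set
\[
t^*\;:=\;\inf\{\,t>0\,:\,t\varphi\ge u\text{ on }\overline{B(0,1)}\,\},
\]
which is finite (as $u$ is bounded and $\varphi$ is bounded below on compact subsets of $B(0,1)\setminus\{0\}$) and positive (since $M>0$). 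By continuity $u\le w:=t^*\varphi$ on $\overline{B(0,1)}$, with equality at some $r_0\in[0,1]$. The hypothesis $u(1)\le 0\le w(1)$ excludes $r_0=1$ (a small Hopf-type boundary argument treats the degenerate subcase $\varphi(1)=0$); if $\varphi$ blows up at the origin, $r_0=0$ is also excluded automatically.

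On a neighborhood of $r_0$ where $w>0$, I set $f(r):=-\mu u^{1+\alpha}$ and $g(r):=-\mu' w^{1+\alpha}$, so that
\[
|\nabla u|^\alpha F(D^2u)\ge f(r)\,r^{-\gamma},\qquad |\nabla w|^\alpha F(D^2w)\le g(r)\,r^{-\gamma}.
\]
The decomposition $f-g=\mu'(w^{1+\alpha}-u^{1+\alpha})+(\mu'-\mu)\,u^{1+\alpha}$, evaluated at $r_0$ where $u(r_0)=w(r_0)>0$, gives $(f-g)(r_0)=(\mu'-\mu)u(r_0)^{1+\alpha}>0$, so by continuity $f>g$ in a whole neighborhood of $r_0$. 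If $r_0=0$, Lemma \ref{fabiana} applied to $(u,w)$ with this $f,g$ yields $(u-w)'(r)\ge 0$ (item (i)) together with the strict bound $|u'|^\alpha u'-|w'|^\alpha w'\ge c\,r^{1-\gamma}$ (item (ii)) in a right neighborhood of $0$; integrating produces $u(r)>w(r)$ for small $r>0$, contradicting $u\le w$. If $r_0\in(0,1)$, interior touching of a subsolution with a strict supersolution (with $f>g$) is ruled out by Theorem \ref{compa} applied on a small annulus around $r_0$, where the sliding argument from its proof and the classical comparison in \cite{BD1} for regular coefficients are available.

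The main technical difficulty is the interior case $r_0\in(0,1)$: the operator $|\nabla\cdot|^\alpha F(D^2\cdot)$ degenerates (or is singular for $\alpha<0$) on $\{\nabla u=0\}$, so a naive pointwise comparison of Hessians at a possibly critical touching point is unavailable. The argument must therefore be routed through the strict-comparison content of Theorem \ref{compa} and its sliding/translation proof, rather than through a direct pointwise test.
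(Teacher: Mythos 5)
Your sliding scheme is close in spirit to the paper's argument (compare $u$ against a supersolution at level $\mu'\in(\mu,\bar\lambda_\gamma')$, treat a contact at the origin via Lemma \ref{fabiana}), but two steps do not close. First, the finiteness of $t^*$ is not justified. The definition \eqref{lambdaprime} only gives $\varphi>0$ \emph{inside} the punctured ball, so $\varphi$ may vanish at $r=1$; if moreover $u(1)=0$ with $u>0$ nearby, the ratio $u/\varphi$ can be unbounded as $r\to1$ (e.g.\ $u\sim(1-r)^{1/2}$ against $\varphi\sim 1-r$) and then no finite $t$ satisfies $t\varphi\ge u$. Your parenthetical justification only controls compact subsets of $B(0,1)\setminus\{0\}$ and ignores the outer boundary; likewise the exclusion of a contact at $r_0=1$ is left to an unspecified ``Hopf-type'' argument, which for this degenerate/singular operator is not a routine step. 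The paper sidesteps both problems at once by dilating the supersolution in $r$ so that $v>0$ on $\partial B(0,1)$ (at the harmless cost of slightly moving $\mu'$); without that device your sliding parameter need not exist.

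Second, the interior contact $r_0\in(0,1)$ is not ruled out by the tool you invoke. Theorem \ref{compa} (and the comparison principle of \cite{BD1} on an annulus) is a \emph{weak} comparison statement: ordering on the boundary of the annulus implies ordering inside, which is perfectly compatible with $u\le w$ touching at an interior point, so no contradiction follows from it even though $f>g$ there. What is needed is either a strong comparison principle (Proposition 4.4 of \cite{BD2}, used elsewhere in the paper) or precisely the pointwise test you dismiss as unavailable: Remark \ref{maxu-v} shows that at an interior maximum point of $u-w$ one has $u'(r_0)=w'(r_0)$ and $(|u'|^\alpha u')'(r_0)\le(|w'|^\alpha w')'(r_0)$ even when the common derivative vanishes (the $\epsilon r$ perturbation handles the critical case), whence $|\nabla u|^\alpha F(D^2u)(r_0)\le |\nabla w|^\alpha F(D^2w)(r_0)$ and, since $u(r_0)=w(r_0)>0$, the contradiction $\mu\ge\mu'$. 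This is exactly how the paper concludes (there with a factor $\eta^{1+\alpha}$, giving $\mu\ge\mu'\eta^{1+\alpha}$, after splitting off the case where the maximum of $u/v$ sits at the origin and the case where $v$ blows up at $0$, the latter being dispatched by the simpler $u\le\epsilon v$ argument). As written, your proof therefore has a genuine gap at both the definition of $t^*$ and the interior-contact step; repairing it essentially forces you back onto the paper's route.
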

                     
     \begin{proof}
     If $\mu < 0$, we just apply Theorem \ref{compa} with $v\equiv  g\equiv f \equiv 0$. So, we can assume without loss of generality that $\mu \geq 0$. 
      For $\mu^\prime \in ]\mu, \bar \lambda_\gamma'[$,  let $v\in{\cal C}_{1, \alpha} $ be a radial function satisfying
      $$| \nabla v |^\alpha F( D^2 v) + \mu^\prime  v^{1+\alpha}  r^{-\gamma}  \leq 0\, ,\quad v>0\qquad \hbox{ in } B(0,1)\setminus \{0\}\, .$$     We can assume without loss of generality that  $v>0$ on $\partial B(0,1)$, e.g. by performing a dilation in $r$  (this may change a little $\mu^\prime$ but we can still suppose by continuity that $\mu^\prime \in ]\mu, \bar \lambda_\gamma'[$).   
     Arguing as in the proof of Lemma \ref{fabiana}, since $| \nabla v |^\alpha { \cal M}^-(D^2 v) + \mu' v^{1+\alpha}  r^{-\gamma} \leq 0$,  we easily obtain that   $v^\prime$  has constant sign near zero. Assuming by contradiction that $v^\prime (r)\geq 0$ in a neighborhood of zero,  then $v$ is bounded in $B(0,1)\setminus \{0\}$. Hence,  Remark \ref{lemmaformm} applies and  yields $v'(r)\leq 0$ for $r$ small enough: a contradiction. This shows  that $v^\prime(r) \leq 0$ in a neighborhood of $0$.

 Then, there are two possible cases: either $\lim_{r\to 0}v(r)=+\infty$ or $v$ can be extended as a continuous function on $\overline{B(0,1)}$. In the first case, let $\epsilon >0$ be given and  let $\delta >0$ be small  in order that 
 $\sup u  \leq \epsilon \inf_{r< \delta} v$, then  applying the  comparison principle in \cite{BD1} for regular coefficients in the domain $B(0,1)\setminus\overline{B(0, \delta)}$, one gets $u\leq \epsilon v$ in $r> \delta$, hence $u\leq \epsilon v$ everywhere.  Letting $\epsilon\to 0$, we get the conclusion. On the other hand, if $v$ is bounded and continuous on $\overline{B(0,1)}$, we can argue by contradiction : Let us assume that $u$ is positive somewhere in $B(0,1)$, so that   ${ u\over v}$  has a positive maximum on $\overline{B(0,1)}$, achieved at some point inside $B(0,1)$. Up to a multiplicative constant for $v$, 
  we  can suppose that 
  $$\max_{\overline{B(0,1)}} {u\over v}=1\, ,$$  
so that  $ u(r)\leq v(r)$. If the maximum is achieved at $0$, then one has 
       $u(0)= v(0)>0\, . $
  By continuity, for   $r$  small enough one has 
       $$| \nabla u |^\alpha  F(D^2u)-| \nabla v |^\alpha  F(D^2v)\geq {1\over 2} ( \mu^\prime v^{1+\alpha} (0)-\mu u^{1+\alpha} (0)) r^{-\gamma}\, .$$
Since $\mu^\prime v^{1+\alpha}(0)-\mu u^{1+\alpha} (0)= (\mu'-\mu)v^{1+\alpha} (0) >0$, we can  use Lemma \ref{fabiana} (ii) and  we get that  
    $(u -v)' (r) >0$ in a right neighborhood of $0$. This is a contradiction to  $u-v$ has a maximum point at zero.
 
 Hence, we have that   
  $1= \max {u\over v} > \frac{u(0)}{v(0)}$. Let us select $\eta  <1$ such that 
     $\eta  >\max\{ \left( {\mu\over \mu^\prime}\right)^{1\over 1+\alpha} , {u(0)\over v(0)}\}$.  Then,  the function $u-\eta v$ has a  positive maximum achieved at some point  $0<\bar r <1$.  
      Since $\bar r$ is an interior point, using the arguments in the  comparison Theorem for regular coefficients in \cite{BD1}, or using remark \ref{maxu-v} below, valid for  functions depending on one variable, 
     one has 
            $$ | \nabla u |^\alpha F( D^2u)( \bar r) \leq  | \nabla ( \eta v) |^\alpha F( D^2\eta  v)( \bar r)$$
        and then  using \eqref{posh}, we get
      $$ -\mu  v^{1+\alpha} ( \bar r){\bar r}^{-\gamma}\leq -\mu  u^{1+\alpha} ( \bar r) {\bar r}^{-\gamma} \leq | \nabla u |^\alpha F( D^2 u(\bar r))\leq | \nabla \eta v |^\alpha F( \eta D^2  v (\bar r)) \leq  -\mu^\prime  \eta^{1+\alpha}  \, v^{1+\alpha} ( \bar r) {\bar  r}^{-\gamma}\, ,$$
which gives the  contradiction $\mu\geq \mu' \eta^{1+\alpha} $. 
       
        \end{proof}
     \begin{rema}\label{maxu-v}
     Suppose that $\alpha >-1$, that $u$ and $v$ are  defined  $]\bar r-h, \bar r+ h[$ are ${ \cal C}^1$ and such that $|u^\prime |^\alpha u^\prime , |v^\prime |^\alpha v^\prime \in { \cal C}^1$. Then  if $\bar r$ is a maximum point, one has 
     $$ u^\prime (\bar r) = v^\prime ( \bar r) \ {\rm and } \ (|u^\prime |^\alpha u^\prime )^\prime (\bar r) \leq   (|v^\prime |^\alpha v^\prime )^\prime (\bar r).$$
     Indeed, the result is obvious when $u^\prime ( \bar r) \neq 0$, since in that case the assumption   $u^\prime |^\alpha u^\prime , |v^\prime |^\alpha v^\prime \in { \cal C}^1$ implies that $u$ and $v$ are ${ \cal C}^2$ on $\bar r$. Then 
     $$  ( |u^\prime |^\alpha u^\prime )^\prime (\bar r) = (1+\alpha) |u^\prime |^\alpha (\bar r)) u^{ \prime \prime} ( \bar r)\leq (1+\alpha) |u^\prime |(\bar r))v^{ \prime \prime}( \bar r) = (1+\alpha) |v^\prime |^\alpha (\bar r)) v^{ \prime \prime} ( \bar r) = 
     (|v^\prime |^\alpha v^\prime )^\prime ( \bar r)$$
      If now $u^\prime ( \bar r) = v^\prime ( \bar r)=0$, let us consider $u_\epsilon = u-\epsilon r$, $v_\epsilon = v-\epsilon r$, $u_\epsilon-v_\epsilon$ has  its maximum also achieved on $\bar r$ and $u_\epsilon^\prime ( \bar r) \neq 0$, hence by the first part 
      $$ (|u_\epsilon^\prime |^\alpha u_\epsilon ^\prime )^\prime (\bar r) \leq  (|v_\epsilon ^\prime |^\alpha v_\epsilon ^\prime)^\prime(\bar r) $$
       and passing to the limit  one gets 
       $$ ( |u^\prime |^\alpha u^\prime )^\prime (\bar r) \leq  (|v^\prime |^\alpha v^\prime )^\prime(\bar r)$$
       \end{rema}
The next result provides the existence, uniqueness  and regularity of solutions   below the eigenvalue $\bar \lambda_\gamma'$.

\begin{theo}\label{exilambda}
 Let   $f$ be a  radial and continuous function in $\overline{B(0,1)}$ satisfying $f\leq 0$, with $f$ not identically zero.   Then, for every $\mu < \bar \lambda_\gamma'$ there exists a unique, positive,  bounded, radial function $u\in {\cal C}_{1, \alpha}  $ satisfying
 $$
 \left\{
 \begin{array}{cl} 
| \nabla u |^\alpha   F( D^2 u)+ \mu u^{1+\alpha}   r^{-\gamma}  =  f (r) r^{-\gamma} & \hbox{ in } B(0,1)\setminus \{0\}\\
   u(1)=0 .&
 \end{array}\right.  $$
Moreover, $u$ can be extended as a strictly positive continuous function in $B(0,1)$,   Lipschitz continuous in $\overline{B(0,1)}$ if $\gamma \leq 1$, ${\cal C}^1$ if $\gamma <1$, $({2+\alpha-\gamma\over 1+\alpha} )-$H\"older continuous in $\overline{B(0,1)}$ if $\gamma >1$. 
                \end{theo}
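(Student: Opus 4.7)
The plan is to construct the solution by a monotone iteration based on Theorem \ref{exi1}, to derive a uniform \emph{a priori} bound from the hypothesis $\mu<\bar\lambda_\gamma'$ through a rescaling argument that invokes Theorem \ref{maxpgamma}, to pass to the limit via Theorem \ref{compact}, and finally to establish positivity, uniqueness and regularity by comparison arguments together with Lemma \ref{fabiana1}.

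For the existence, fix $\beta>0$. Given a radial, bounded $v\ge 0$, Theorem \ref{exi1} produces a unique $Tv\in\mathcal{C}_{1,\alpha}\cap C(\overline{B(0,1)})$ solving
\begin{equation*}
|\nabla u|^\alpha F(D^2 u)-\beta|u|^\alpha u\, r^{-\gamma}=\bigl(f-(\mu+\beta)v^{1+\alpha}\bigr)r^{-\gamma},\qquad u(1)=0,
\end{equation*}
and Theorem \ref{compa} (valid since $\beta>0$) shows that $T$ is order-preserving. Because $f\le 0$, the function $v_0\equiv 0$ is a subsolution, so the iterates $v_{k+1}:=Tv_k$ form a monotone increasing sequence of nonnegative radial functions in $\mathcal{C}_{1,\alpha}$; any fixed point of $T$ solves the target equation. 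Moreover, rearranging the defining equation and using $v_k\ge v_{k-1}$ gives $|\nabla v_k|^\alpha F(D^2 v_k)+\mu v_k^{1+\alpha} r^{-\gamma}\ge f r^{-\gamma}$, so each $v_k$ is a subsolution of the original problem.

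The main obstacle is the uniform upper bound on $\{v_k\}$, since the term $\mu u^{1+\alpha} r^{-\gamma}$ has the ``wrong'' sign for a direct maximum principle. I argue by contradiction: if $\|v_k\|_\infty\to\infty$ along a subsequence, the rescaled functions $w_k:=v_k/\|v_k\|_\infty$ satisfy $\|w_k\|_\infty=1$, $w_k(1)=0$, and
\begin{equation*}
|\nabla w_k|^\alpha F(D^2 w_k)+\mu w_k^{1+\alpha} r^{-\gamma}\ge \|v_k\|_\infty^{-(1+\alpha)}f\, r^{-\gamma}\longrightarrow 0.
\end{equation*}
Rescaling the defining equation for $v_k$ shows that $|\nabla w_k|^\alpha F(D^2 w_k)$ equals an $r^{-\gamma}$-weighted source that stays uniformly bounded, so Theorem \ref{compact} yields, along a subsequence, uniform convergence to a radial $w_\infty\in\mathcal{C}_{1,\alpha}\cap C(\overline{B(0,1)})$ with $w_\infty\ge 0$, $\|w_\infty\|_\infty=1$, $w_\infty(1)=0$ and $|\nabla w_\infty|^\alpha F(D^2 w_\infty)+\mu w_\infty^{1+\alpha}r^{-\gamma}\ge 0$. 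Since $\mu<\bar\lambda_\gamma'$, Theorem \ref{maxpgamma} then forces $w_\infty\le 0$, contradicting $\|w_\infty\|_\infty=1$. Hence $\{v_k\}$ is uniformly bounded, and a further application of Theorem \ref{compact} produces a radial limit $u\in\mathcal{C}_{1,\alpha}\cap C(\overline{B(0,1)})$ solving the equation.

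Positivity of $u$ in $B(0,1)$ is immediate: since $u\ge 0$ and $|\nabla u|^\alpha F(D^2 u)=(f-\mu u^{1+\alpha})r^{-\gamma}\le f r^{-\gamma}\le 0$, the strong maximum principle for $|\nabla\cdot|^\alpha F(D^2\cdot)$ implies $u\equiv 0$ or $u>0$ in $B(0,1)$, and $u\equiv 0$ is ruled out by $f\not\equiv 0$. For uniqueness, given two positive solutions $u_1,u_2$, I set $t^\star:=\inf\{t>0:\,tu_2\ge u_1\}$; by $(1+\alpha)$-homogeneity and $f\le 0$, $tu_2$ is a supersolution for every $t>1$, strictly so at points where $f<0$. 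If $t^\star>1$, the contact point $\bar r$ lies in $(0,1)$, and Remark \ref{maxu-v} applied at $\bar r$ as in the proof of Theorem \ref{maxpgamma} yields $|u_1'|^\alpha F(D^2 u_1)(\bar r)\le (t^\star)^{1+\alpha}|u_2'|^\alpha F(D^2 u_2)(\bar r)$; combined with the two equations and the identity $u_1(\bar r)=t^\star u_2(\bar r)$, this reduces to $\bigl(1-(t^\star)^{1+\alpha}\bigr)f(\bar r)\le 0$, which is contradicted once the strict inequality from the supersolution property is propagated to $\bar r$ by the strong maximum principle. By symmetry, $u_1=u_2$. Finally, the regularity claims follow from Lemma \ref{fabiana1}(iii) applied to $|\nabla u|^\alpha F(D^2 u)=(f-\mu u^{1+\alpha})r^{-\gamma}$, whose right-hand side is bounded in $B(0,1)\setminus\{0\}$: one obtains $|u'(r)|\le c\, r^{(1-\gamma)/(1+\alpha)}$ near $0$, which integrates to Lipschitz continuity when $\gamma\le 1$, to $C^1$-regularity when $\gamma<1$, and to H\"older continuity with exponent $(2+\alpha-\gamma)/(1+\alpha)$ when $\gamma>1$.
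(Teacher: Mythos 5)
Your existence argument is essentially the paper's: a monotone iteration built on Theorem \ref{exi1}, a blow-up/normalization argument that contradicts Theorem \ref{maxpgamma} to get the uniform bound, and passage to the limit via Theorem \ref{compact}. Your variant of adding $\beta>0$ to both sides is harmless (the paper takes $\beta=0$ and uses the sign of the right-hand side for uniqueness in Theorem \ref{exi1}), and the positivity and regularity claims are handled the same way, although you should also record, as the paper does via Remark \ref{lemmaformm}, that $u'\leq 0$ near the origin so that $u(0)=\lim_{r\to 0}u(r)>0$; strict positivity \emph{at} $r=0$ is part of the statement and does not follow from the strong maximum principle in the punctured ball alone.

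The genuine gap is in the uniqueness step. You set $t^{\star}=\inf\{t>0:\ tu_2\geq u_1\}$ and assert that if $t^{\star}>1$ the contact point $\bar r$ lies in $(0,1)$. There is no reason for this: since both solutions vanish at $r=1$ and both are positive and finite at $r=0$, the supremum of $u_1/u_2$ may be attained at $r=0$, or only approached as $r\to 1$. These are precisely the cases the paper must treat separately: contact at the origin is excluded by the Hopf-type lemma at $r=0$ (Lemma \ref{fabiana} together with Remark \ref{strict}, which give $(u_1-t^{\star}u_2)'>0$ near $0$ because the right-hand side $(1-(t^{\star})^{1+\alpha})f+\mu((t^{\star}u_2)^{1+\alpha}-u_1^{1+\alpha})$ is strictly positive there), while contact at an interior point or at the boundary is handled by the strong comparison principle of \cite{BD2} (Proposition 4.4) and the Hopf lemma, which force $u_1\equiv t^{\star}u_2$ and hence $f=(t^{\star})^{1+\alpha}f$, contradicting $f\not\equiv 0$. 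Your interior argument is also incomplete on its own terms: at a contact point with $f(\bar r)=0$ the inequality $\bigl(1-(t^{\star})^{1+\alpha}\bigr)f(\bar r)\leq 0$ carries no contradiction, and ``propagating the strict inequality to $\bar r$ by the strong maximum principle'' is not a substitute for the strong \emph{comparison} principle, which propagates equality outward from $\bar r$ rather than strictness inward. Without these three cases being treated, uniqueness is not proved.
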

 \begin{proof} 
 
 As in the proof of Theorem \ref{maxpgamma}, we can assume without loss of generality that $\mu>0$, otherwise the conclusion just follows from Theorem \ref{exi1}. The uniqueness of $u$ follows from Theorem \ref{maxpgamma}.
 
  As far as existence is concerned,  let us recursively define a sequence $\{u_n\}_{n\geq 0}$ as follows: we set
$u_0 \equiv  0\, ,$
and  by using Theorem \ref{exi1}, we  define $u_{n+1}\in  {\cal C}_{1, \alpha }  \cap C(\overline{B(0,1)})$ as the unique bounded radial solution of
$$
\left\{
\begin{array}{cc}
| \nabla u_{n+1}|^\alpha  F( D^2 u_{n+1})  = ( f -\mu u_n^{1+\alpha}  )  r^{-\gamma}  & \hbox{ in } B(0,1)\setminus \{0\}\\
 u_{n+1}(1)=0. &
 \end{array}\right.$$
  By Theorem \ref{compa}, we have that  $u_{n+1} \geq 0$, hence it is strictly positive in $B(0,1)\setminus \{0\}$  by the standard strong maximum principle, since $f$ is not identically zero. In particular,  $u_n$ is not identically zero for all $n\geq 1$. By applying  the comparison principle in Theorem \ref{compa} again,  we deduce also that   $u_{n+1} \geq u_n$. Let us prove that $\{u_n\}_n$ is uniformly bounded. If not, by setting
                $v_n = {\|u_n\|_\infty}^{-1} u_n $ and $k_n =  \|u_{n+1}\|_\infty^{-1}\|u_n\|_\infty\leq 1$, one gets that $v_{n+1}$ satisfies
$$
\left\{
\begin{array}{cc}
| \nabla v_{n+1}|^\alpha  F ( D^2 v_{n+1}) = \left( \frac{f(r)}{\|u_{n+1}\|^{1+\alpha} _\infty}  -\mu k_n^{1+\alpha}  v_n^{1+\alpha} (r)\right)  r^{-\gamma}& \hbox{ in } B(0,1)\setminus \{0\}\\
 v_{n+1}(1)=0 & 
 \end{array}\right.
  $$
                  Since $\{v_n\}_n$ is uniformly bounded, by applying Theorem \ref{compact},  we can extract  a subsequence still denoted by $\{v_n\}_n$ uniformly converging to  a function $v\geq 0$ satisfying
 $$
 \left\{
\begin{array}{cc}
| \nabla v |^\alpha F ( D^2 v)+ \mu k^{1+\alpha}  v^{1+\alpha}  r^{-\gamma} =0 & \hbox{ in } B(0,1)\setminus \{0\}\\
 v(1)=0 & 
 \end{array}\right.
$$
 where $k \leq 1$ is the limit  of some converging subsequence of  $\{k_n\}_n$.
Since $v$ is a radial solution, one has that  $v\in {\cal C}_{1, \alpha} $  and, since $\mu k \leq \mu < \bar \lambda_\gamma'$,  Theorem \ref{maxpgamma} yields $v\leq 0$. Hence, we get $v\equiv 0$, a contradiction with $\|v\|_\infty = 1$. 
                 
                  We have obtained that $\{u_n\}_n$ is bounded, and using once more  Theorem \ref{compact}, we deduce that $\{u_n\}$ uniformly converges to some $u$, which  satisfies the desired equation. 
            By the strong maximum principle, we get that $u>0$ in $B(0,1)\setminus\{0\}$. Moreover, by Remark \ref{lemmaformm}, we have that     $u^\prime (r)\leq 0$ for $r>0$ small enough, which implies  $u(0)>0$.  Finally,    the global regularity of $u$ follows from Remark \ref{remC1alpha} and Theorem \ref{exi1}. In fact $u^\prime <0$ and then $u\in { \cal C}^2( B(0,1)\setminus\{0\})$.  
            
             We now prove the uniqueness. Suppose that $u$ and $v$ are two ( positive )  radial solutions of 
             $$| \nabla u |^\alpha F( D^2 u) + \mu u ^{1+\alpha} r^{-\gamma} = f r^{-\gamma}, \ u(1)=0$$
             If $f\equiv 0$, the maximum  principle implies that $u\equiv 0$, so we can suppose that $f$ is not identically $0$. 
             
              We prove that $u\leq v$, next, exchanging the roles of $u$ and $v$ one gets the uniqueness. Suppose by contradiction that 
              $\sup { u \over v} = \eta >1$. (The supremum is well defined by Hopf principle in \cite{BD1}). 
               Then $u\leq \eta v$, and the supremum is achieved either  on $\bar r >0$, or "on the boundary" which means that 
               $\lim_{r\rightarrow 1} { u \over v} = \eta$, or on $0$. Suppose that we are in one of the first situations, then since $u^\prime <0$ by the equation and by lemma \ref{fabiana} one gets using the strong comparison principle, proposition 4.4  in \cite{BD2}, that 
               $u= \eta v$ everywhere in $\overline{B(0,1)}\setminus\{0\}$ and then by continuity $u = \eta v$ in $\overline{B(0,1)}$. This contradicts the equations since one would obtain then 
               $f = \eta f$, a contradiction. 
               We are back to the case where $u< \eta v$ for any $r\neq 0$. Then one has 
               $$ { \cal M}^+ ( | \nabla u |^\alpha D^2 u-| \nabla (\eta v) |^\alpha D^2 (\eta v)) \geq( (1-\eta) f + \mu ( \eta v-u)) r^{-\gamma} >0$$
                and then using remark \ref{strict} one gets that 
                $u^\prime > \eta v^\prime
$ in a neighborhood of $0$ , which contradicts the fact that $u-\eta v$ has a supremum at zero. 
                    
\end{proof} 

We can now prove  that the smooth eigenvalue $\bar \lambda_\gamma'$ is actually achieved on smooth eigenfunctions.
\begin{theo}\label{exieig3}
                     There exists $u\in C(\overline{B(0,1)})\cap C^2(B(0,1)\setminus \{0\})$,   radial,  strictly positive  in $B(0,1)$  and satisfying
$$
 \left\{
\begin{array}{cl}
 | \nabla u |^\alpha F( D^2 u) + \bar \lambda_\gamma' u^{1+\alpha}    r^{-\gamma} = 0& \hbox{ in } B(0,1)\setminus \{0\}\\
 u(1)=0 & 
 \end{array}\right.
 $$
Furthermore, in $\overline{B(0,1)}$, $u$ is   Lipschitz continuous  when $\gamma \leq 1$, ${ \cal C}^1$ when $\gamma <1$,  and  H\"older continuous  with exponent ${2+\alpha-\gamma\over 1+\alpha} $ if $\gamma >1$.  
\end{theo}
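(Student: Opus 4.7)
The plan is to approximate $\bar\lambda_\gamma'$ from below and extract the eigenfunction via a normalization and compactness argument, as in \cite{BDL}. I would fix a strictly increasing sequence $\mu_n \uparrow \bar\lambda_\gamma'$ and, applying Theorem~\ref{exilambda} with the datum $f \equiv -1$, obtain for each $n$ a unique positive radial $u_n \in {\cal C}_{1,\alpha}\cap C(\overline{B(0,1)})$ satisfying
$$|\nabla u_n|^\alpha F(D^2 u_n) + \mu_n u_n^{1+\alpha} r^{-\gamma} = -r^{-\gamma}, \qquad u_n(1)=0.$$

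The first substantive step is to show $\|u_n\|_\infty \to +\infty$. If not, Theorem~\ref{compact} produces, along a subsequence, a limit $u$ which is positive, radial, bounded by some $M$ and solves the same equation at $\mu = \bar\lambda_\gamma'$. Picking $\epsilon = (2M^{1+\alpha})^{-1}$ one has
$$|\nabla u|^\alpha F(D^2 u) + (\bar\lambda_\gamma'+\epsilon)\, u^{1+\alpha}\, r^{-\gamma} \leq r^{-\gamma}\bigl(\epsilon M^{1+\alpha} - 1\bigr) \leq -\tfrac{1}{2}r^{-\gamma}<0,$$
so $u$ becomes admissible at level $\bar\lambda_\gamma'+\epsilon$ in the definition \eqref{lambdaprime}, contradicting the maximality of $\bar\lambda_\gamma'$.

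Granted $\|u_n\|_\infty\to\infty$, I would set $v_n := u_n/\|u_n\|_\infty$. Using the $1$-homogeneity of $F$ and the obvious scaling of $|\nabla u|^\alpha$ yields
$$|\nabla v_n|^\alpha F(D^2 v_n) + \mu_n v_n^{1+\alpha} r^{-\gamma} = -\|u_n\|_\infty^{-(1+\alpha)} r^{-\gamma},$$
which I rewrite as $|\nabla v_n|^\alpha F(D^2 v_n) = g_n r^{-\gamma}$ with $g_n = -\mu_n v_n^{1+\alpha} - \|u_n\|_\infty^{-(1+\alpha)}$ uniformly bounded. Theorem~\ref{compact} first yields equicontinuity and uniform convergence of a subsequence $v_n\to v$ on $\overline{B(0,1)}$; then, since $g_n\to -\bar\lambda_\gamma' v^{1+\alpha}$ uniformly, the same theorem shows that the radial limit $v\in {\cal C}_{1,\alpha}\cap C(\overline{B(0,1)})$ solves
$$|\nabla v|^\alpha F(D^2 v) + \bar\lambda_\gamma' v^{1+\alpha} r^{-\gamma} = 0 \quad \hbox{in } B(0,1)\setminus\{0\}, \qquad v(1)=0,$$
with $v\geq 0$ and $\|v\|_\infty=1$. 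Strict positivity on $B(0,1)\setminus\{0\}$ follows from the strong maximum principle (Proposition~4.4 of \cite{BD2}) applied to the nontrivial nonnegative $v$, while positivity at the origin is argued as at the end of the proof of Theorem~\ref{exilambda}: Remark~\ref{lemmaformm} forces $v'(r)\leq 0$ in a right neighborhood of $0$, so $v(0)>0$. The $C^2$ regularity away from $0$ (since $v'<0$) and the boundary regularity in the three regimes $\gamma<1$, $\gamma\leq 1$ and $\gamma>1$ are then inherited from Remark~\ref{remC1alpha} and the estimates of Lemma~\ref{fabiana1} and Theorem~\ref{exi1}.

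The main obstacle is the exclusion of the bounded case $\|u_n\|_\infty \leq M$: compactness alone produces a solution at the critical $\mu=\bar\lambda_\gamma'$, and the delicate point is converting the strict forcing $-r^{-\gamma}$ into an admissible competitor for a strictly larger $\mu$ via the homogeneity of the operator. Passage to the limit in the nonlinear zero-order term $\mu_n v_n^{1+\alpha}$ is then routine given uniform convergence, and the regularity assertions reduce to already-proved statements.
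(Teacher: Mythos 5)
Your proposal follows essentially the same route as the paper: approximate with $\mu_n\uparrow\bar\lambda_\gamma'$ and $f\equiv -1$ via Theorem \ref{exilambda}, rule out boundedness of $\|u_n\|_\infty$ by passing to a limit with Theorem \ref{compact} and using the homogeneity to make that limit admissible at level $\bar\lambda_\gamma'+\epsilon$ with $\epsilon<\|u\|_\infty^{-(1+\alpha)}$, then normalize, pass to the limit again, and recover positivity and regularity as in Theorem \ref{exilambda}. This matches the paper's argument step for step, so the proof is correct and not a different approach.
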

\begin{proof} 
We consider a sequence $\{\lambda_n\}$, with  $\lambda_n \rightarrow \bar \lambda_\gamma'$ and  $\lambda_n <\bar 
\lambda_\gamma'$ and, for all $n$, the solution
 $u_n\in C(\overline{B(0,1)})\cap {\cal C}_{1, \alpha} $, ( even $ C^2(B(0,1)\setminus \{0\})$ by remark. \ref{remC1alpha})   provided by Theorem \ref{exilambda}
                          of 
                           $$ | \nabla u_n |^\alpha F( D^2 u_n) + \lambda_n u_n^{1+\alpha}   r^{-\gamma} = {- r^{-\gamma} },\quad  u_n(1)=0 \ .$$
We claim that the positive sequence $\{ \|u_n\|_\infty\}_n$ is unbounded.     Indeed, arguing by contradiction,  if $\{ u_n\}_n$ is uniformly bounded, then,  by using Theorem \ref{compact} and considering a subsequence if necessary,                            we obtain  that there exists a  solution $u\in C(\overline{B(0,1)})\cap {\cal C}_{1, \alpha} $, $u\geq 0$, of
                            $$ | \nabla u |^\alpha F( D^2 u) + \bar \lambda_\gamma' u ^{1+\alpha}  r^{-\gamma}  = {- r^{-\gamma} },\quad  u(1)=0\, . $$
 Then, arguing as in the proof of Theorem \ref{exilambda}, we deduce that $u$ is strictly positive in $B(0,1)$, and by taking $0< \epsilon < {1\over \|u\|_\infty^{1+\alpha}}$,  we see that $u$ satisfies  
                            $$ | \nabla u |^\alpha F( D^2 u) + (\bar \lambda_\gamma'  +\epsilon)  u ^{1+\alpha}   r^{-\gamma}     \leq 0, $$
                             a contradiction to the definition of $\bar \lambda_\gamma'$.  It then follows that the sequence $\{u_n\}_n$ is not uniformly bounded.  Normalizing and considering  a subsequence, letting $n\to \infty$ yields the existence of  $u\in  
 C(\overline{B(0,1)})\cap {\cal C}_{1, \alpha}$ satisfying 
$$ | \nabla u |^\alpha F( D^2 u) + \bar \lambda_\gamma' u^{1+\alpha}   r^{-\gamma} = 0,  \ {\rm in} \   B(0,1) \setminus \{0\} \      , u(1)=0 \ .$$
Finally,  the strict positivity of $u$ in $B(0,1)$ and its global regularity in $\overline{B(0,1)}$ follow by arguing as in the proof of Theorem \ref{exilambda}.

                             \end{proof}

\begin{prop}\label{simple}
The first eigenvalue is simple. 
\end{prop}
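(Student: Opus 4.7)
The plan is to follow and extend the uniqueness argument given at the end of the proof of Theorem \ref{exilambda}, now for the eigenvalue equation where $f\equiv 0$. The cornerstone is positive homogeneity \eqref{posh}: if $u$ is a positive eigenfunction for $\bar\lambda_\gamma'$, then so is $cu$ for every $c>0$. Thus, given two positive radial eigenfunctions $u_1,u_2$ provided by Theorem \ref{exieig3}, it suffices to exhibit $\tau>0$ with $u_1\equiv \tau u_2$. Observe first that $u_i'<0$ on $(0,1]$: at any interior critical point $r_0$, the equation would give $0=|u_i'(r_0)|^\alpha F(D^2 u_i(r_0))=-\bar\lambda_\gamma' u_i(r_0)^{1+\alpha}/r_0^\gamma<0$, a contradiction; strictness at $r=1$ follows from the Hopf principle of \cite{BD1}, applicable since the weight $r^{-\gamma}$ is smooth there.

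Define $\tau:=\sup_{(0,1]} u_1/u_2$. By continuity on $(0,1)$ together with $\lim_{r\to 0} u_1/u_2=u_1(0)/u_2(0)$ and $\lim_{r\to 1} u_1/u_2=u_1'(1)/u_2'(1)$ (the latter by L'Hopital, using $u_i'(1)<0$), $\tau$ is finite and positive, attained at some $\bar r\in[0,1]$. Without loss of generality $\tau\geq 1$, and set $w:=\tau u_2$, which is still an eigenfunction and satisfies $u_1\leq w$ on $\overline{B(0,1)}$ with equality at $\bar r$. Three cases must be handled, corresponding to $\bar r\in(0,1)$, $\bar r=1$, and $\bar r=0$.

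If $\bar r\in(0,1)$, apply the strong comparison principle (Proposition 4.4 of \cite{BD2}) to the two $C^2$ solutions $u_1\leq w$ of the same equation touching at $\bar r$: this yields $u_1\equiv w$ on the connected set $(0,1)$, hence on the closed ball by continuity. If $\bar r=1$, then $\tau=u_1'(1)/u_2'(1)$ gives $w'(1)=u_1'(1)$, so $w-u_1\geq 0$ has a double zero at $r=1$. Near the boundary the operator is non-singular and $u_i\in C^2$ with $u_i'\neq 0$, so $w-u_1$ satisfies a linearized elliptic inequality with bounded coefficients, and the Hopf boundary point lemma implies either $(w-u_1)'(1)<0$ strictly (contradicting the double zero) or $w\equiv u_1$ near $r=1$, which then propagates to $(0,1)$ by strong comparison.

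The remaining and delicate case is $\bar r=0$: $u_1(0)=w(0)$ while $u_1<w$ strictly on $(0,1)$. Adapting the last step of the proof of Theorem \ref{exilambda}, subtracting the eigenvalue equations and using the ellipticity of $F$ yields
$$
\mathcal{M}^+\bigl(|\nabla u_1|^\alpha D^2 u_1-|\nabla w|^\alpha D^2 w\bigr)\geq \bar\lambda_\gamma'\,\frac{w^{1+\alpha}-u_1^{1+\alpha}}{r^\gamma},
$$
whose right-hand side is strictly positive for every $r>0$. Lemma \ref{fabiana} together with the pointwise strict-positivity refinement in Remark \ref{strict} then forces $(u_1-w)'>0$ in a right neighborhood of $0$; combined with $u_1(0)=w(0)$ this produces $u_1>w$ for small $r>0$, contradicting $u_1\leq w$. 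The main obstacle is precisely this last case: the touching occurs at the singular point of the weight $r^{-\gamma}$, where the standard strong comparison does not apply, and one must convert pointwise strict positivity of $w^{1+\alpha}-u_1^{1+\alpha}$ into strict monotonicity of the derivative, which is exactly the content of Remark \ref{strict}.
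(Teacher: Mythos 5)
Your argument is, in its core mechanics, the same as the paper's: compare two positive eigenfunctions through the extremal value of their ratio, kill an interior touching point with the strong comparison principle (Proposition 4.4 of \cite{BD2}, using $u'<0$), kill a touching at $r=1$ with Hopf, and kill a touching at the singular point $r=0$ by subtracting the equations, using $F(A)-F(B)\leq \mathcal{M}^+(A-B)$ and the strict-sign refinement of the monotonicity lemma (Remark \ref{strict} / Lemma \ref{fabiana}) to get $(u_1-w)'>0$ near $0$, contradicting the touching. That last case is handled exactly as in the paper (it is the same device used at the end of the proof of Theorem \ref{exilambda}), so up to this point the proposal is sound.

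There is, however, a genuine gap of scope. You take both eigenfunctions ``provided by Theorem \ref{exieig3}'', i.e.\ bounded, continuous on $\overline{B(0,1)}$, with finite values at $0$ and with $u_i'(1)$ available for the L'Hopital/Hopf step. But the admissible class of eigenfunctions here (positive radial $\mathcal{C}_{1,\alpha}$ solutions on the punctured ball) does not a priori exclude functions blowing up at the origin — recall that for $\gamma=2+\alpha$ the eigenfunctions are genuinely unbounded — so simplicity must also rule out an unbounded eigenfunction not proportional to the constructed one; your normalization $\tau=\sup u_1/u_2$ is not even finite in that situation. The paper's proof is arranged precisely to cover this: it compares an \emph{arbitrary} positive eigenfunction $u$ with the bounded one $v$, sets $\eta^{-1}=\sup v/u$ (finite by Hopf since $v$ is bounded), and in the case $u(0)=+\infty$ observes that the supremum can then only be ``attained'' at $r=1$ as the ratio $v'(1)/u'(1)$, which contradicts the Hopf lemma of \cite{BD1,BD2}; the case $u(0)<\infty$ is your $\bar r=0$ case. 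This asymmetric set-up is what yields the stronger conclusion stated in the paper, namely that all eigenfunctions are bounded and proportional. A smaller point: your justification that $u_i'<0$ at interior critical points via $0=|u_i'(r_0)|^\alpha F(D^2u_i(r_0))$ only makes sense for $\alpha\geq 0$ (and assumes $C^2$ regularity there); for $-1<\alpha<0$ the factor $|u'|^\alpha$ is singular at critical points, and one should instead argue through the $C^1$ quantity $|u'|^\alpha u'$ and the sign arguments of Lemma \ref{fabiana1} and Remark \ref{lemmaformm}, as the paper does.
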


 \begin{proof} 
 
  We have proved that there exists a positive bounded eigenfunction $v$. Take another eigenfunction $u>0$.   Let us define 
  $\eta^{-1} = \sup { v \over u}$ which is well defined by Hopf principle and is finite. Then $u-\eta v$ is nonnegative in $\overline{B(0,1)}\setminus\{0]$ and satisfies 
  \begin{eqnarray*}
  {\cal M}^-( | \nabla u |^\alpha  D^2 u -| \nabla (\eta v) |^\alpha D^2(\eta  v)) 
  &\leq& F( | \nabla u|^\alpha D^2 u)- F( | \nabla(\eta  v) |^\alpha D^2 (\eta v)) \\
  &=& -\bar \lambda u^{1+\alpha} +\bar \lambda  ( \eta v)^{1+\alpha} 
  \\
  &\leq & 0
  \end{eqnarray*}
  If by contradiction $u-\eta v >0$ on some point in $B(0,1)\setminus\{0\}$,   using the fact that $u^\prime $ and $v^\prime$ are $<0$ and using the strong comparison principle, Proposition 4.4  in \cite{BD2} one gets $u-\eta v >0$ in the whole of $B(0,1)\setminus\{0\}$. 
   We now distinguish two   cases
   
    -If  $u(0) = +\infty$, then necessarily the supremum is "achieved " on $1$ say 
   $$ {1\over \eta} = \lim_{r\rightarrow 1} { v \over u} = { v^\prime (1)\over u^\prime (1)}$$
   But this contradicts Hopf lemma in \cite{BD1}, \cite{BD2}. 
   
   - If $u(0) < \infty$, the maximum could be achieved on $0$ , but then we get from  Lemma \ref{fabiana} that $(u-\eta v)^\prime \leq 0$ for $r$ sufficiently small, hence $u-\eta v$ cannot have a strict minimum at zero. 
    Thus we have obtained that all the eigenfunctions are bounded and multiple to each others. 
    \end{proof}
\begin{prop}\label{compasous}
Suppose that $0\leq \mu < \mu^\prime \leq \bar \lambda$.  Suppose that $u$  and $v>0$ are continuous, radial, ${\cal C}_{1, \alpha}$  satisfy respectively 
 $$ | \nabla u |^\alpha F( D^2 u) + \mu  |u|^{\alpha} ur^{-\gamma} \geq g$$
 $$ | \nabla v |^\alpha F( D^2 v) + \mu^\prime v^{1+\alpha} r^{-\gamma}\leq  f\leq 0$$
 with $g\geq f$ 
 and $u\leq v$  on the boundary. Then $u\leq v$ in $B(0,1)$. 
  \end{prop}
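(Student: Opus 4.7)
The plan is to argue by contradiction, adapting the strategy used in Theorem \ref{maxpgamma}. Suppose that $u(x_0) > v(x_0)$ at some $x_0 \in B(0,1)$ and set $\eta := \sup_{\overline{B(0,1)}}(u/v)$. By the dilation trick used at the beginning of the proof of Theorem \ref{maxpgamma}, I first replace $v$ by $v(\cdot/s)$ for $s$ slightly larger than $1$ so that $v>0$ on $\partial B(0,1)$; by continuity the strict inequality $\mu < \mu'$ is preserved. Since $u$ is bounded and $v>0$ on $\overline{B(0,1)} \setminus\{0\}$, the ratio $u/v$ is continuous on $\overline{B(0,1)}\setminus\{0\}$ (extending by $0$ at the origin if $v$ blows up there, or by $u(0)/v(0)$ otherwise), so $\eta$ is finite and attained; since $u/v\leq 1$ on $\partial B(0,1)$, it is attained at some $\bar r \in [0,1)$ with $\eta>1$. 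If $v$ blows up at the origin, necessarily $\bar r \in (0,1)$.

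Suppose $\bar r \in (0,1)$. Then $u - \eta v$ attains an interior maximum at $\bar r$, so Remark \ref{maxu-v} yields $u'(\bar r) = \eta v'(\bar r)$ and $(|u'|^\alpha u')'(\bar r) \leq (|\eta v'|^\alpha \eta v')'(\bar r)$. In the radial decomposition of $D^2u$, the off-diagonal part proportional to $u'/r$ coincides with that of $D^2(\eta v)$ at $\bar r$, while the radial inequality forces $u''(\bar r)\leq (\eta v)''(\bar r)$, so ellipticity of $F$ combined with \eqref{posh} gives
$$|u'|^\alpha F(D^2u)(\bar r) \;\leq\; |(\eta v)'|^\alpha F(D^2(\eta v))(\bar r) \;=\; \eta^{1+\alpha}|v'|^\alpha F(D^2v)(\bar r).$$
Inserting the two differential inequalities at $\bar r$ and using $u(\bar r)=\eta v(\bar r)>0$, so that $\mu|u|^\alpha u = \mu\eta^{1+\alpha} v^{1+\alpha}$, I obtain
$$g(\bar r) - \eta^{1+\alpha}f(\bar r) \;\leq\; \eta^{1+\alpha}(\mu-\mu')\,v(\bar r)^{1+\alpha}\,\bar r^{-\gamma} \;<\;0.$$
On the other hand $g\geq f$, $f\leq 0$ and $\eta^{1+\alpha}>1$ imply $g - \eta^{1+\alpha}f \geq f(1-\eta^{1+\alpha})\geq 0$, a contradiction.

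For the remaining case $\bar r = 0$, where $v$ is necessarily bounded at $0$ and $u(0)=\eta v(0)>0$, I mimic the argument of Lemma \ref{fabiana}(ii). Writing the two inequalities as $|u'|^\alpha F(D^2u) \geq (g\,r^\gamma - \mu u^{1+\alpha})\,r^{-\gamma}$ and $|(\eta v)'|^\alpha F(D^2(\eta v)) \leq (\eta^{1+\alpha}f\,r^\gamma - \eta^{1+\alpha}\mu' v^{1+\alpha})\,r^{-\gamma}$, the difference of the two source terms at $r=0$ equals $\eta^{1+\alpha}(\mu'-\mu)v(0)^{1+\alpha}>0$, hence by continuity it is bounded below by some $\delta>0$ on a right neighborhood of $0$. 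Treating them as bounded radial functions and applying Lemma \ref{fabiana}(ii) to $u$ and $\eta v$ then yields $|u'|^\alpha u'(r) - |\eta v'|^\alpha \eta v'(r) \geq c\,r^{1-\gamma}$ for some $c>0$ and small $r$, hence $u'(r) > (\eta v)'(r)$ on a right neighborhood of $0$, incompatible with $u - \eta v$ having its maximum at $r=0$.

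The main obstacle is the interior comparison $|u'|^\alpha F(D^2u)(\bar r) \leq \eta^{1+\alpha}|v'|^\alpha F(D^2v)(\bar r)$ in the degenerate regime $u'(\bar r)=v'(\bar r)=0$, which can occur when $-1<\alpha<0$; this is handled by the $\epsilon$-regularization $u_\epsilon = u - \epsilon r$ already used in Remark \ref{maxu-v}. A secondary technical point is the preliminary dilation step guaranteeing $v>0$ on $\partial B(0,1)$, so that $\eta$ is finite and attained, while preserving $\mu<\mu'$.
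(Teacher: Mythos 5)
Your core argument is sound and takes a genuinely different route from the paper's. At an interior maximum point of $u-\eta v$ you invoke Remark \ref{maxu-v} together with the homogeneity \eqref{posh}, exactly as the paper does in the proof of Theorem \ref{maxpgamma}, and at the origin you invoke Lemma \ref{fabiana} (ii) as in Theorems \ref{compa} and \ref{maxpgamma}; both steps are correct, and the degenerate situation $u'(\bar r)=v'(\bar r)=0$ that worries you is already absorbed by Remark \ref{maxu-v}, since the operator only ever enters through $|u'|^\alpha u'$ and $(|u'|^\alpha u')'$. The paper argues differently: it compares $u$ with $\eta v$ globally, applies the strong comparison principle (Proposition 4.4 of \cite{BD2}) plus the arguments of Proposition \ref{simple} to conclude $u\equiv\eta v$, and then the two differential inequalities force $\mu=\mu'$. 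Your pointwise maximum-point argument avoids the strong comparison principle altogether, which is a real simplification --- but only if the supremum of $u/v$ is attained at an interior point or at the origin.

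The genuine gap is in your preliminary reduction to $v>0$ on $\partial B(0,1)$. Replacing $v$ by $v_s(x)=v(x/s)$, $s>1$, gives $|\nabla v_s|^\alpha F(D^2 v_s)+\mu' s^{\gamma-2-\alpha}v_s^{1+\alpha}r^{-\gamma}\leq s^{-(2+\alpha)}f(r/s)$: the zero-order constant is indeed still larger than $\mu$ for $s$ close to $1$, as you say, but the right-hand side becomes $\tilde f_s(r)=s^{-(2+\alpha)}f(r/s)$, and the hypothesis $g\geq f$ --- which is precisely what you use at the interior maximum point to conclude $g(\bar r)-\eta^{1+\alpha}\tilde f_s(\bar r)\geq 0$ --- is not inherited by the pair $(g,\tilde f_s)$. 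For instance if $f(\bar r/s)=0$ while $g(\bar r)=f(\bar r)<0$, then $g(\bar r)-\eta^{1+\alpha}\tilde f_s(\bar r)=f(\bar r)<0$ and no contradiction arises. Thus the case $v(1)=0$ --- exactly the case in which your dilation is invoked, and the typical one in the applications, where $v$ is an eigenfunction or the solution of Theorem \ref{exilambda} --- is not actually covered. When $v(1)>0$ your proof is complete; to treat $v(1)=0$ you should not dilate $v$ alone, but rather keep the pair $(u,v)$ and handle the remaining possibility that $\sup u/v$ is attained only in the limit $r\to 1$ (which can occur only when $u(1)=v(1)=0$, so that $\eta=u'(1)/v'(1)$) by a Hopf-lemma argument applied to $\eta v-u$, as the paper does through the arguments of Proposition \ref{simple}.
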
 
  \begin{proof}
    One can assume that $v>0$ on the boundary. Indeed, let $\lambda <1$  close to $1$. Then $v_\lambda (x) = v( \lambda x)$, $u_\lambda (x) = u( \lambda x)$ satisfy 
    $$ | \nabla u_\lambda |^\alpha F( D^2 u_\lambda) + \mu  \lambda ^{1+\alpha-\gamma} |u_\lambda|^{\alpha} u_\lambda r^{-\gamma}  \geq \lambda^{1+\alpha} g_\lambda$$
 $$ | \nabla v_\lambda |^\alpha F( D^2 v_\lambda) + \mu^\prime v_\lambda^{1+\alpha}  \lambda ^{1+\alpha-\gamma}r^{-\gamma} \leq \lambda^{1+\alpha}  f_\lambda \leq 0$$
  we have $u_\lambda \leq v_\lambda$ on  $r=1$, and $v_\lambda(1) >0$.  If the result is true in the case $v(1)>0$, letting $\lambda$ tend to $1$ yields the desired result.  So we suppose that $v(1)>0$. Let 
  $\eta = \sup { u \over v}$, and suppose that $\eta >1$.  One has then 
  $\eta v-u\geq 0$ and 
  $$ | \nabla u |^\alpha F( D^2 u) \geq( g-\mu u^{1+\alpha} )r^{-\gamma} \geq (g-\mu \eta ^{1+\alpha} v^{1+\alpha}) r^{-\gamma} \geq (f-\mu \eta ^{1+\alpha} v^{1+\alpha}) r^{-\gamma}  \geq | \nabla ( \eta v) |^\alpha F( D^2( \eta v))$$
   Using once more Proposition 4.4 in \cite {BD2} one obtains that  for any $r\neq 0$, either $u<\eta v$ or 
   $u= \eta v$. Using the same arguments as in the proof of \ref{simple} one gets that $u \equiv  \eta v$ everywhere. This contradicts the inequations since 
   $$ | \nabla \eta v |^\alpha F( D^2 \eta v) + \mu ( \eta v)^{1+\alpha} r^{-\gamma} =  | \nabla u |^\alpha F( D^2 u) + \mu  |u|^{\alpha} ur^{-\gamma}  \geq g \geq f \geq | \nabla \eta v |^\alpha F( D^2 \eta v) + \mu^\prime ( \eta v)^{1+\alpha} r^{-\gamma}$$
    which implies $\mu = \mu^\prime$, a contradiction. 
   \end{proof}

\subsection{The first eigenvalue inherited from $\R^+$}
In all that section we assume that $F= { \cal M}^+$.  The case of ${ \cal M}^-$ is easily derived by  exchanging $\lambda$ and $\Lambda$. We present an alternative proof  of the existence of  radial eigenfunctions related to the eigenvalue $\bar \lambda_\gamma$. Here, the idea is to show   the existence of global solutions defined in  $(0,+\infty)$ of the ODE associated with  radial solutions of  the equation  
       $$ {\cal M}^+ ( D^2 u) = - {u \over r^\gamma}\, \qquad \hbox{ in } \R^N\setminus \{0\}\, .$$  
  For $\gamma <1$ let us define 
   $$V_{r_0} = \{ u \in C ( [0, r_0])\, : \  |u(r)-1| \leq {1\over 2}, u(0) = 1\}. $$  
   where $r_o$ will be chosen later , and  let $T$  be defined on $V(r_o)$ by 
  
   $$Tu(r) = 1-\int_0^r\left(  {1+\alpha \over \lambda}\right)^{1\over 1+\alpha} s^{1-N} (\int_0^s u^{1+\alpha }  t^{(N-1) (1+\alpha)-\gamma} dt )^{1\over 1+\alpha}ds$$
   
 We prove below that   for $r_o$ small enough $T$ is a contraction mapping and then it possesses a unique fixed point. 
     Let    $r_o$  be defined by 
       \begin{equation}\label{ro1}
         r_o^{2+\alpha-\gamma\over 1+\alpha} < (2+\alpha-\gamma)  3^{-1-|\alpha| } (1+\alpha^+)^{-2-\alpha \over 1+\alpha}   \lambda^{1\over 1+\alpha} ( (N-1)(1+\alpha) + (1-\gamma))^{1\over 1+\alpha}. 
         \end{equation}  
         Then   $T$ maps $V_{r_o}$ into itself for $r_o$ small enough.  Indeed 
            $$ |Tu(r)-1|\leq\left( {3\over 2}\right)^{1+\alpha}  r^{2+\alpha-\gamma\over 1+\alpha} {(1+\alpha)^{2+\alpha\over 1+\alpha} \over \lambda^{1\over 1+\alpha} (2+\alpha-\gamma)(( N-1)(1+\alpha) + (1-\gamma))^{1\over 1+\alpha}}$$
    To prove that $T$ is a contraction mapping on $V( r_o)$,        we consider separately the cases $\alpha >0$ and $\alpha <0$. 
      
     - If $\alpha >0$,  

         Let us define, when $\alpha >0$  the space 
         ${\bf L}^\alpha(]0,r[)  = L^{ \alpha+1} ( ]0,r[, t^{(N-1)(1+\alpha)-\gamma})$ the space of functions with power $\alpha+1$ integrable  for the measure $ t^{(N-1)(1+\alpha)-\gamma}dt$. 
    Using Minkowski inequality when $\alpha >0$ one has 
    $$ ||u|_{{\bf L}^\alpha(]0,r[ } -|v|_{ {\bf L}^\alpha(]0,r[)} | \leq |u-v|_{ {\bf L}^\alpha(]0,r[) } $$
     and then 
     \begin{eqnarray*}
      |Tu -Tv|(r) &\leq& (1+\alpha) ^{1\over 1+\alpha} \int_0^r {1\over \lambda^{1\over 1+\alpha} s^{N-1}}|u-v|_{ {\bf L}^\alpha(]0,s[)}ds \\
      & \leq& {(1+\alpha)^{1\over 1+\alpha}  \over \lambda^{1\over 1+\alpha} ( N-1)(1+\alpha) + (1-\gamma))^{1\over 1+\alpha} }|u-v|_\infty  \int_0^r t^{1-N} t^{N-1+ {1-\gamma\over 1+\alpha}} dt\\
      &\leq & |u-v|_\infty r^{2+\alpha-\gamma\over 1+\alpha} {(1+\alpha)^{2+\alpha\over 1+\alpha}  \over (2+\alpha-\gamma) \lambda^{1\over 1+\alpha} (( N-1)(1+\alpha) + (1-\gamma))^{1\over 1+\alpha}}\\
      &\leq & {1\over 2} |u-v|_\infty.
      \end{eqnarray*}
      So $T$ is a contraction mapping on $V_{r_o}$ under the assumption \eqref{ro1}.

      If $\alpha <0$ 
          We prove that under the assumption  \eqref{ro1}, $T$ is a contraction mapping . We use twice  the mean value's Theorem  : 
      
      \begin{eqnarray*}
     &&  |Tu(r)-Tv(r)| \leq {(1+\alpha)^{1\over 1+\alpha} \over \lambda^{1\over 1+\alpha} }  \int_0^r {1\over s^{N-1} }\left\vert  \left(( \int_0^s u^{1+\alpha} t^{(N-1)(1+\alpha)-\gamma}dt)^{1\over 1+\alpha} -( \int_0^s  v^{1+\alpha}  t^{(N-1)(1+\alpha)-\gamma}dt)^{1\over 1+\alpha}\right) \right\vert ds\\
       &\leq & {(1+\alpha)^{1\over 1+\alpha}\over 1+\alpha}((N-1)(1+\alpha) + (1-\gamma))^{\alpha \over 1+\alpha}  \left({3\over 2}\right)^{-\alpha}\times \\
              && \int_0^r  s^{1-N} s^{-\alpha (N-1)-{(1-\gamma)\alpha\over 1+\alpha}} \left\vert \int_0^s ( u^{1+\alpha}  -v^{1+\alpha} ) (t)t^{(N-1)(1+\alpha)-\gamma } dt \right\vert ds \\
       &\leq &(1+\alpha) ^{1\over 1+\alpha} 2^{-\alpha} ((N-1)(1+\alpha) + (1-\gamma))^{\alpha  \over 1+\alpha} \left({3\over 2}\right)^{-\alpha} \int_0^r  s^{-(1+\alpha) (N-1)-{(1-\gamma)\alpha\over 1+\alpha}} \int_0^s  |u-v|_\infty  t^{(N-1)(1+\alpha)-\gamma} dt ds\\
       &\leq &  3^{-\alpha} (1+\alpha) ^{1\over 1+\alpha} ((N-1)(1+\alpha) + (1-\gamma))^{-1 \over 1+\alpha}  |u-v|_\infty \int_0^r  s^{-(1+\alpha) (N-1)-{(1-\gamma)\alpha\over 1+\alpha}} s^{(N-1)(1+\alpha)+ 1-\gamma}  ds \\
       &\leq & 3^{-\alpha} {(1+\alpha)^{2+\alpha\over 1+\alpha}  \over 2+\alpha-\gamma} ((N-1)(1+\alpha) + (1-\gamma))^{-1\over 1+\alpha}  |u-v|_\infty r^{2+\alpha-\gamma\over 1+ \alpha}  \\
        &\leq & {1\over 2} |u-v|_\infty.
              \end{eqnarray*} 
               
         by the choice of $r_o$.

We need to prove that the fixed  point above is a solution of $$| \nabla u |^\alpha { \cal M}^+ ( D^2 u) =-u^{1+\alpha}  r^{-\gamma}$$
 for that aim we prove that if $\gamma <1$, $u^\prime <0$ and $(|u^\prime |^\alpha u^\prime )^\prime <0$. 
  The first assertion is immediate, and then since $u(0)  = 1$,  $u\leq 1$,  one gets that
  $$|u^\prime |^\alpha u^\prime \geq  -{1+\alpha\over\lambda  r^{(N-1)(1+\alpha)}} \int_0^r   t^{(N-1)(1+\alpha)-\gamma} dt \geq -{1+\alpha\over \lambda }{ r^{1-\gamma} \over (N-1)(1+\alpha)+1-\gamma}$$
  
 From this we derive 
   $$u(r) \geq 1-{((1+\alpha)\lambda^{-1})^{1\over 1+\alpha}  \over 2 +\alpha-\gamma} {(1+\alpha) r^{2+\alpha-\gamma \over 1+\alpha} \over ( (N-1)(1+\alpha)+1-\gamma)^{1\over 1+\alpha}}.$$
   
  Then 
  $$ |u|^\alpha u (r) \geq 1-{(1+\alpha)^{3+2\alpha\over 1+\alpha} \lambda^{-1\over 1+\alpha} \over 2+\alpha-\gamma} {r^{2+\alpha-\gamma \over 1+\alpha} \over ( (N-1)(1+\alpha)+1-\gamma)^{1\over 1+\alpha}}+o( r^{2+\alpha-\gamma  \over 1+\alpha} ).$$

Using the fact that $u$ is a fixed point of $T$   \begin{eqnarray*}
   (|u^\prime |^\alpha u^\prime)^\prime &=& -(1+\alpha) u^{1+\alpha} \lambda^{-1} r^{-\gamma} -(N-1) r^{-1} (1+\alpha) |u^\prime |^\alpha u^\prime\\
   &\leq &  -r^{-\gamma} (1+\alpha)\lambda^{-1}
   +(N-1) { (1+\alpha) ^2\over \lambda ((N-1)(1+\alpha)+1-\gamma)}  r^{-\gamma}+O( r^{(1-\gamma )(2+\alpha) \over 1+\alpha} ) \\
     &=& r^{-\gamma} (1+\alpha) \lambda^{-1} {-1+\gamma\over  (N-1)(1+\alpha) + 1-\gamma} + o(r^{-\gamma})\\
   &<&0
   \end{eqnarray*}

  We now consider the case $\gamma \geq 1$. Here we define 
    \begin{equation}\label{r1}
            r_o^{2+\alpha-\gamma\over 1+\alpha} < (2+\alpha-\gamma)2^{\alpha^+}   3^{-1-|\alpha| } (1+\alpha^+)^{-2-\alpha \over 1+\alpha}   \Lambda^{1\over 1+\alpha} ( (\tilde N_+-1)(1+\alpha) + (1-\gamma))^{1\over 1+\alpha}
         \end{equation}  and as  in the case $\gamma <1$
          $V_{r_0} = \{ u \in C ( [0, r_0])\, : \  |u(r)-1| \leq {1\over 2}, u(0) = 1\}. $.  We define $T$ n
          by 
  $$Tu(r) = 1-\int_0^r \left( {1+\alpha \over \Lambda s^{ (\tilde N_+-1)(1+\alpha)} } \int_0^s u^{1+\alpha}  (t) t^{(\tilde N_+-1)(1+\alpha)-\gamma} dt \right)^{1\over 1+\alpha} ds$$
  Following the same calculations as in the case $\gamma <1$,  $T$ maps $V_{r_0} $ into itself and is a contraction mapping .   We need to prove that 
  $|u^\prime |^\alpha u^\prime$ has a derivative $\geq 0$. 
  
  If $\gamma >1$ we have  for some positive constant $c$ which can vary from one line to another,  $|u^\prime |^\alpha u^\prime \geq - c r^{1-\gamma \over 1+\alpha}$ and then 
  $$ u(r) \geq 1-c r^{2+\alpha-\gamma \over 1+\alpha}$$
   which  in turn implies that 
   $$ |u^\prime |^\alpha u^\prime \leq -{1\over \Lambda} { r^{1-\gamma} \over ((\tilde N_+-1)(1+\alpha) + 1-\gamma} +o(  r^{1-\gamma} )$$ 
    and then  using once more $u<1$, 
    \begin{eqnarray*}
   ( |u^\prime |^\alpha u^\prime )^\prime& \geq & -(1+\alpha) r^{-\gamma} \Lambda^{-1}  + (1+\alpha) { (\tilde N_+-1)(1+\alpha) \over r}  {1\over \Lambda} { r^{1-\gamma} \over (\tilde N_+-1)(1+\alpha) + 1-\gamma} +o(  r^{1-\gamma} )\\
    & =& { (1+\alpha) \Lambda^{-1}(\gamma-1)\over   (\tilde N_+-1)(1+\alpha) + 1-\gamma} r^{-\gamma} +o(  r^{1-\gamma} )
    \end{eqnarray*}
    
  There remains to do the case $\gamma = 1$.   Here we need to go further in the second term of the DL : 
  
   $$|u^\prime |^\alpha u^\prime = -{(1+\alpha) \over \Lambda} r^{ -(\tilde N_+-1)(1+\alpha)} \int_0^r u^{1+\alpha}  (t) t^{(\tilde N_+-1)(1+\alpha)-1} dt \geq -{1 \over \Lambda (\tilde N_+-1)} $$
   Then 
   $$ u(r) \geq 1- \left( {1\over \Lambda (\tilde N_+-1)} \right)^{1\over 1+\alpha} r$$
    hence 
    $$ |u|^\alpha u \geq 1- (1+\alpha)\left( {1\over \Lambda (\tilde N_+-1)} \right)^{1\over 1+\alpha} r + O(r^2)$$
     Hence 
     $$ |u^\prime |^\alpha u^\prime \leq -{1\over \Lambda ( \tilde N_+-1)} + {(1+\alpha)^2 \over \Lambda} \left( {1 \over \Lambda (\tilde N_+-1)} \right)^{1\over 1+\alpha}{r\over (\tilde N_+-1)(1+\alpha)+1}+o(r)$$
     From this we derive
     $$u(r) \leq 1-\left( {1\over \Lambda ( \tilde N_+-1)}\right)^{1\over 1+\alpha} r + O( r^2)$$
     and 
     $$u^{1+\alpha} \leq 1-( \alpha+1) \left( {1\over \Lambda ( \tilde N_+-1)}\right)^{1\over 1+\alpha} r + O( r^2)$$
     
     \begin{eqnarray*}
      ( |u^\prime |^\alpha u^\prime)^\prime &\geq&{ (1+\alpha)^2 \over \Lambda} r^{-1} r\left({1  \over \Lambda (N_+-1)} \right)^{1\over 1+\alpha}
     \\
     & -&{(1+\alpha)^3(\tilde N_+-1) \over \Lambda} \left({1  \over \Lambda ((\tilde N_+-1))} \right)^{1\over 1+\alpha}((\tilde N_+-1)(1+\alpha) +1)^{-1}
       +O(r)\\
       &=& 
       \left({1  \over \Lambda (N_+-1)} \right)^{1\over 1+\alpha} {(1+\alpha)^2 \over \Lambda (N_+-1)(1+\alpha)+1} +  O(r)  >0.\\
          \end{eqnarray*}
   
  \ 

Let us prove that there exists $\bar r$ so that 
$u( \bar r)=0$. If not $u>0$ and then from the equation $u^\prime$ remains $<0$. Let us define , inspired by \cite{EFQ}, \cite{D} : 
$$ y(r) = { |u^\prime |^\alpha u^\prime r^{(N-1)(1+\alpha)} \over u^{1+\alpha}}$$
 which is then always $<0$. $y$ satisfies the inequality 
 
 $$ y^\prime (r) \leq -(\alpha+1) {r^{(N-1)(1+\alpha)-\gamma} \over \Lambda} -( \alpha +1)| y|^{\alpha+2\over \alpha+1} r^{1-N}$$
  and then defining for some $r_1>0$
  $$k(r) = \int_{r_1}^r { |y|^{ \alpha+2\over \alpha+1} \over t^{N-1}} dt, $$
  
  $$ y(r) +(\alpha+1) k(r) \leq  y(r_1) -(\alpha+1) {r^{(N-1)(1+\alpha)+1-\gamma} -r_1^{(N-1)(1+\alpha)+1-\gamma} \over (N-1)(1+\alpha)+1-\gamma)  \Lambda}$$
  In particular  for some constant $c>0$  which can vary from one line to another 
  $$ y(r) \leq -c r^{(N-1)(1+\alpha)+1-\gamma} $$
   which implies that 
   $$k(r) \geq c\int_{r_1}^r { t^{(N-1)(2+\alpha) + (1-\gamma){ \alpha+2\over \alpha+1}}\over t^{N-1}} dt \geq c r^{(N-1)(1+\alpha)  +(1-\gamma){ \alpha+2\over \alpha+1}+1}. $$
   On the other hand since $k \leq -(\alpha+1)^{-1} y$ for $r$ large  one has 
   $$ k^\prime \geq  ck^{ \alpha+2 \over \alpha+1} r^{1-N}$$
    which implies 
    $${ k^\prime \over k^{ \alpha+2\over \alpha+1}} \geq c r^{1-N}$$ and after integrating between $r$ and $+\infty$, using $\lim_{r\rightarrow +\infty} k(r) = +\infty$, 
    $$ k(r) ^{1 \over \alpha+1} \leq c r^{N-2}$$
     We would then have 
     $$(N-1)(1+\alpha)  +(1-\gamma){ \alpha+2\over \alpha+1}+1\leq (N-2)(1+\alpha)$$
      which would imply 
      $\alpha+2-\gamma\leq 0$,
       a contradiction.

                                       \subsection{The stability of the principal eigenvalue and  related eigenfunctions} 
 The results of the present  section give, as a corollary,  the proof of Theorem \ref{exigamma}. 
 
  Let us start by proving the stability with respect to the $\epsilon-$regularization of the singular potential. We recall that $r_\epsilon = (r^2+\epsilon^2)^{1\over 2}$ and $\bar \lambda_\gamma^\epsilon = \bar \lambda ( F, {1\over r_\epsilon^\gamma}, B(0,1))$. 
 
  \begin{theo}\label{th3}
  One has
       $$\bar \lambda_\gamma' =\lim_{ \epsilon \rightarrow 0} \bar \lambda_\gamma^\epsilon\, .$$ 
        Furthermore,  if $\{u_\epsilon\}$ is  the sequence of the eigenfunctions associated with the eigenvalue $\bar \lambda_\gamma^\epsilon$ and satisfying  $u_\epsilon (0) = 1$, then, one can extract from $\{u_\epsilon\}$ a subsequence uniformly converging on $\overline{ B(0,1)}$ to the eigenfunction associated with $\bar \lambda_\gamma'$ which takes the value $1$ at zero.
 \end{theo}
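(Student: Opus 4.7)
The plan is to split the proof into two inequalities, then identify the limit eigenfunction using simplicity. For the easy direction I would show $\liminf_{\epsilon\to 0} \bar\lambda_\gamma^\epsilon \geq \bar\lambda_\gamma'$ by using the eigenfunction from Theorem \ref{exieig3} as a test function; for the hard direction I would compactify the family of regularized eigenfunctions and pass to the limit in the equation.

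First I would observe the monotonicity: if $\epsilon_1 < \epsilon_2$, then $1/r_{\epsilon_1}^\gamma \geq 1/r_{\epsilon_2}^\gamma$, so any admissible function for $\bar\lambda_\gamma^{\epsilon_1}$ is admissible for $\bar\lambda_\gamma^{\epsilon_2}$; hence $\epsilon \mapsto \bar\lambda_\gamma^\epsilon$ is non-decreasing and $L := \lim_{\epsilon\to 0}\bar\lambda_\gamma^\epsilon$ exists. Letting $v$ denote the eigenfunction for $\bar\lambda_\gamma'$ from Theorem \ref{exieig3}, the fact that $r_\epsilon \geq r$ yields
$$|\nabla v|^\alpha F(D^2 v) + \bar\lambda_\gamma'\, \frac{v^{1+\alpha}}{r_\epsilon^\gamma}\leq |\nabla v|^\alpha F(D^2 v) + \bar\lambda_\gamma'\, \frac{v^{1+\alpha}}{r^\gamma} = 0,$$
making $v$ admissible in the definition of $\bar\lambda_\gamma^\epsilon$ with $\mu = \bar\lambda_\gamma'$. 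Hence $\bar\lambda_\gamma^\epsilon \geq \bar\lambda_\gamma'$ for every $\epsilon$, and $L \geq \bar\lambda_\gamma'$.

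For the reverse inequality I would pick eigenfunctions $u_\epsilon$ associated with $\bar\lambda_\gamma^\epsilon$, normalized by $u_\epsilon(0) = 1$. They are radial, monotone in $r$, with $u_\epsilon(1) = 0$, hence $0 \leq u_\epsilon \leq 1$. Rewriting the equation as
$$|\nabla u_\epsilon|^\alpha F(D^2 u_\epsilon) = -\bar\lambda_\gamma^\epsilon u_\epsilon^{1+\alpha}\left(\frac{r}{r_\epsilon}\right)^\gamma\, r^{-\gamma},$$
the numerator on the right-hand side is uniformly bounded in $\epsilon$ (since $r/r_\epsilon\leq 1$, $u_\epsilon \leq 1$, and $\bar\lambda_\gamma^\epsilon$ is bounded above by monotonicity). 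The barrier construction used in Lemma \ref{fabiana} and Remark \ref{lemmaformm}, with $w(r) = 1 - Lr^\tau$, $\tau = (2+\alpha-\gamma)/(1+\alpha)$, then applies uniformly in $\epsilon$ and yields an estimate $|u_\epsilon(r) - 1| \leq Cr^\tau$ near zero with $C$ independent of $\epsilon$. Combined with Theorem \ref{compact} applied on compact subsets of $B(0,1)\setminus \{0\}$, this gives equicontinuity of $\{u_\epsilon\}$ on $\overline{B(0,1)}$. Along a subsequence, $u_\epsilon \to u$ uniformly on $\overline{B(0,1)}$, with $u(0) = 1$ and $u(1) = 0$; by stability of viscosity solutions (using that $1/r_\epsilon^\gamma \to 1/r^\gamma$ uniformly on each $[\delta,1]$),
$$|\nabla u|^\alpha F(D^2 u) + L\, \frac{u^{1+\alpha}}{r^\gamma} = 0 \quad \hbox{in } B(0,1)\setminus \{0\}.$$
Strict positivity of $u$ in $B(0,1)$ follows from $u(0) = 1 > 0$ together with the monotonicity provided by Remark \ref{lemmaformm}. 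Thus $u$ is admissible in the definition of $\bar\lambda_\gamma'$ with $\mu = L$, forcing $L \leq \bar\lambda_\gamma'$, so $L = \bar\lambda_\gamma'$. Finally, Proposition \ref{simple} identifies $u$ as the unique normalized eigenfunction of $\bar\lambda_\gamma'$, whence the whole family $\{u_\epsilon\}$ converges, not merely a subsequence.

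The main obstacle is establishing the uniform modulus of continuity for $\{u_\epsilon\}$ at the origin, since a naive application of the estimates for the regularized potential would give constants blowing up as $\epsilon\to 0$. The resolution is the elementary observation $1/r_\epsilon^\gamma \leq 1/r^\gamma$: putting the eigenvalue equation in the form above reveals a source term with uniformly bounded amplitude against the singular weight $r^{-\gamma}$, exactly matching the hypothesis of Lemma \ref{fabiana}, so the associated barriers are usable uniformly in $\epsilon$.
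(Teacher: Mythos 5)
Your overall strategy parallels the paper's: monotonicity of $\epsilon\mapsto\bar\lambda_\gamma^\epsilon$, uniform estimates on the normalized eigenfunctions $u_\epsilon$, compactness and passage to the limit in the punctured ball, and Proposition \ref{simple} to identify the limit. The two proofs diverge on the easy inequality. The paper exploits $r_\epsilon\geq r$ in the opposite direction: $u_\epsilon$ is a positive radial function with $|\nabla u_\epsilon|^\alpha F(D^2u_\epsilon)+\bar\lambda_\gamma^\epsilon u_\epsilon^{1+\alpha}r^{-\gamma}\geq 0$ in $B(0,1)\setminus\{0\}$ and $u_\epsilon(1)=0$, so Theorem \ref{maxpgamma} forces $\bar\lambda_\gamma^\epsilon\geq\bar\lambda_\gamma'$. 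You instead insert the singular eigenfunction $v$ of Theorem \ref{exieig3} into the admissible class for $\bar\lambda_\gamma^\epsilon$. That is workable, but as written it has a gap: $\bar\lambda(F,r_\epsilon^{-\gamma},B(0,1))$ is defined through supersolutions on the \emph{whole} ball, and your differential inequality for $v$ is only established in $B(0,1)\setminus\{0\}$; at the origin the regularized potential is not negligible, and a quadratic test function touching $v$ from below at its maximum would violate the inequality if one were allowed to use it. The repair is to note that $v$ is radially strictly decreasing, so $0$ is a strict interior maximum and no $C^2$ test function with non-vanishing gradient can touch $v$ from below there; under the definition of viscosity supersolutions of \cite{BD1} the condition at $0$ is then vacuous. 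Some such remark is needed, or one can simply switch to the paper's argument, which avoids the issue.

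On the hard direction your barrier $w=1-Lr^\tau$ combined with Lemma \ref{fabiana} does give the modulus of continuity at the origin, and the uniformity in $\epsilon$ is indeed available because the constants in that lemma depend only on $\inf(f-g)$ and the ellipticity constants; but you assert this rather than check it. The paper's route is more economical: using $u_\epsilon'\leq 0$, $u_\epsilon\leq u_\epsilon(0)=1$ and $r_\epsilon\geq r$, it integrates the radial ODE inequality coming from ${\cal M}^+\geq F$ and obtains the explicit bound $0\geq u_\epsilon'(r)\geq -C\,r^{(1-\gamma)/(1+\alpha)}$ on all of $(0,1]$ with $C$ independent of $\epsilon$, which yields global equicontinuity on $\overline{B(0,1)}$ in one stroke. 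Two further loose ends in your write-up: strict positivity of the limit $u$ in $B(0,1)\setminus\{0\}$ does not follow from $u(0)=1$ and monotonicity alone (a monotone limit could vanish identically on an annulus near $\partial B(0,1)$); the strong maximum principle is what rules this out, as in the paper. Finally, admissibility of $u$ in the definition of $\bar\lambda_\gamma'$ also requires $u\in{\cal C}_{1,\alpha}$, which should be noted via Remark \ref{remC1alpha}. With these patches your argument is correct.
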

        
           \begin{proof} 
 Let   $\{u_\epsilon\}$ be  the sequence as in the statement. Then, each $u_\epsilon$ is a ${\cal C}_{1, \alpha} $ positive radial  function  satisfying in particular
 $$
| \nabla u_\epsilon |^\alpha F(D^2u_\epsilon)+\bar \lambda_\gamma^\epsilon \frac{u_\epsilon^{1+\alpha} }{r ^\gamma}\geq 0 \quad \hbox{ in } B(0,1)\setminus \{0\}\, ,
 $$
 so that, by Theorem \ref{maxpgamma},  one has $\bar \lambda_\gamma^\epsilon\geq \bar \lambda_\gamma'$. Moreover, the sequence $\{ \bar \lambda_\gamma^\epsilon\}$ is monotone increasing with respect to $\epsilon$. Thus, we deduce
 $$
 \mu\, : = \lim_{\epsilon\to 0} \bar \lambda_\gamma^\epsilon \geq \bar \lambda_\gamma'\, .$$
On the other hand, by the monotonicity properties of radially symmetric solutions of elliptic equations, we know that $u_\epsilon'(r)\leq 0$ for $r\in [0,1]$. 
Since $${ \cal M}^+ ( D^2 u_\epsilon) \geq F( D^2 u_\epsilon)$$
we deduce that, independently of the sign of $u_\epsilon''(r)$, one has
$$ |u_\epsilon ^\prime |^\alpha  u_\epsilon^{\prime \prime} + (\tilde{N}_+-1)  |u_\epsilon ^\prime |^\alpha{u_\epsilon^\prime \over r} \geq -\frac{\bar \lambda_\gamma^\epsilon}{\lambda} u_\epsilon ^{1+\alpha}  r_\epsilon^{-\gamma} \, .$$
This implies
$$ (  |u_\epsilon ^\prime |^\alpha u_\epsilon^\prime r^{(\tilde{N}_+-1)(1+\alpha)})^\prime \geq -(1+\alpha)  \frac{\bar \lambda_\gamma^\epsilon}{\lambda} u_\epsilon^{1+\alpha}  \frac{r^{(\tilde{N}_+-1(1+\alpha) }} {r_\epsilon^\gamma}\geq - (1+\alpha)\frac{\bar \lambda_\gamma^\epsilon}{\lambda} r^{(\tilde{N}_+-1)(1+\alpha) -\gamma}\, ,$$ 
and therefore, 
by integrating, 
$$
 0\geq u_\epsilon^\prime(r)\geq - \left( \frac{\bar \lambda_\gamma^\epsilon(\alpha+1)}{\lambda ((\tilde{N}_+-1)(1+\alpha) +1-\gamma)}\right)^{1\over 1+\alpha} r^{1-\gamma\over 1+\alpha}\, .
 $$
Hence, on $\overline{B(0,1)}$, the functions $u_\epsilon$ are uniformly Lipschitz continuous if $\gamma \leq 1$, and uniformly ${2+\alpha-\gamma\over 1+\alpha}$- H\"older continuous if $\gamma>1$. In both cases, up to a subsequence, $\{u_\epsilon\}$ is uniformly converging to a continuous radial function $u\in C(\overline{B(0,1)})$ which satisfies $u(0)=1$ and 
$$ | \nabla u |^\alpha F( D^2 u) + \mu {u^{1+\alpha}  \over r^\gamma}=0.$$
Hence,  $u$ is ${\cal C}_{1, \alpha} $ and, by the standard strong maximum principle, $u$ is strictly positive in $B(0,1)$. This yields, by definition, $\mu\leq \bar \lambda_\gamma^\prime$. 
Hence, $\mu =\bar \lambda_\gamma^\prime$ and the conclusion follows from Proposition \ref{simple}.  Note that since $u^\prime<0$  for $r\neq 0$, $u$ is ${ \cal C}^2 ( B(0,1)\setminus\{0\})$. 

\end{proof}

As a consequence of the previous theorem, we finally obtain the following 

        \begin{cor}\label{delta} One has
         $$\bar\lambda_\gamma = \lim_{\delta\to 0} \bar \lambda _\gamma \left( B(0,1) \setminus \overline{B(0, \delta)}\right) =\bar \lambda_\gamma'\, .$$
          \end{cor}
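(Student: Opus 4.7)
The strategy is a squeeze: I will show $\bar\lambda_\gamma' \leq \bar\lambda_\gamma \leq \mu_\delta := \bar\lambda_\gamma(B(0,1)\setminus \overline{B(0,\delta)})$ for every $\delta>0$ and then, by extracting a limit of the principal eigenfunctions on the annuli, that $\mu := \lim_{\delta \to 0^+} \mu_\delta$ satisfies $\mu \leq \bar\lambda_\gamma'$. The easy inequalities are immediate from the definitions: $\bar\lambda_\gamma' \leq \bar\lambda_\gamma$ because the admissible class in (\ref{lambdaprime}) is contained in that of (\ref{deflambdagamma}); and for each $\delta > 0$ any admissible function for $\bar\lambda_\gamma$ restricts to one on $A_\delta := B(0,1)\setminus \overline{B(0,\delta)}$, giving $\mu_\delta \geq \bar\lambda_\gamma$, while the same restriction argument between $\delta_1 < \delta_2$ shows $\mu_{\delta_1}\leq \mu_{\delta_2}$. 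Hence $\mu = \inf_{\delta >0} \mu_\delta$ exists and $\mu \geq \bar\lambda_\gamma \geq \bar\lambda_\gamma'$.

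On each smooth bounded annulus $A_\delta$ the potential $r^{-\gamma}$ is bounded, so the BNV-type construction of \cite{BD1} adapted to the operator $|\nabla u|^\alpha F(D^2 u)$ yields a positive radial eigenfunction $u_\delta \in {\cal C}_{1,\alpha}$ for $\mu_\delta$, vanishing on $\partial A_\delta$, which I normalize by $\|u_\delta\|_\infty = 1$. Let $r_\delta$ be the maximum point of $u_\delta$. Since $\mu_\delta \leq \mu_{\delta_0}$ is uniformly bounded and $u_\delta \leq 1$, the right-hand side $-\mu_\delta u_\delta^{1+\alpha}$ is uniformly bounded; on the descending branch $[r_\delta, 1]$, where $u_\delta' \leq 0$, the integration argument from Lemma \ref{fabiana} combined with Remark \ref{lemmaformm} produces the uniform estimate $|u_\delta'(r)| \leq C\,r^{(1-\gamma)/(1+\alpha)}$, exactly as in the proof of Theorem \ref{th3}.

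Using the local estimates underlying Theorem \ref{compact} on compact subsets of $(0,1]$ (on which $u_\delta$ is eventually defined), $\{u_\delta\}$ is equicontinuous; a diagonal extraction produces a subsequence, still denoted $\{u_\delta\}$, converging locally uniformly on $(0,1]$ to a radial limit $u \in {\cal C}_{1,\alpha}$ satisfying $|\nabla u|^\alpha F(D^2u) + \mu u^{1+\alpha}/r^\gamma = 0$ in $B(0,1)\setminus\{0\}$ and $u(1)=0$. The uniform Hölder control extends $u$ continuously to $\overline{B(0,1)}$. Non-triviality is handled via the normalization: either a subsequence of $r_\delta$ converges to some $r_\ast > 0$, in which case $u(r_\ast) = 1$, or $r_\delta \to 0$, in which case integrating $u_\delta'$ between $r_\delta$ and $r$ gives $u_\delta(r) \geq 1-C\,r^{(2+\alpha-\gamma)/(1+\alpha)}$ for small $r$, so $u(0) = 1$. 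Either way $u \not\equiv 0$, and the strong maximum principle yields $u > 0$ in $B(0,1)$; thus $u$ lies in the admissible class of (\ref{lambdaprime}), so $\mu \leq \bar\lambda_\gamma'$, closing the squeeze.

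The main difficulty is the ascending branch $(\delta, r_\delta)$ on which $u_\delta' \geq 0$: Lemma \ref{fabiana} does not apply there directly, and $r_\delta$ may not a priori tend to $0$, so one cannot simply \emph{postulate} that the ascending branch collapses. I bypass this by relying on the interior equicontinuity underlying Theorem \ref{compact}, which is insensitive to the sign of $u_\delta'$ and needs only the boundedness of the right-hand side on compacts of $(0,1]$. Whichever of the two dichotomies occurs along the extracted subsequence (localization of the maximum near $0$ or near some $r_\ast > 0$), the limit $u$ is a legitimate admissible function for $\bar\lambda_\gamma'$ at level $\mu$, and the chain $\bar\lambda_\gamma' \leq \bar\lambda_\gamma \leq \mu \leq \bar\lambda_\gamma'$ forces equality throughout.
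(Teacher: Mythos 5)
Your argument is correct, but it follows a genuinely different route from the paper's. The paper deduces the corollary almost for free from Theorem \ref{th3}: it notes the monotonicity $\bar\lambda_\gamma'\leq\bar\lambda_\gamma\leq\bar\lambda_\gamma\left(B(0,1)\setminus \overline{B(0,\delta)}\right)$, then for a fixed small $\epsilon_0$ (chosen via Theorem \ref{th3} so that $\bar\lambda^{\epsilon_0}_\gamma\leq\bar\lambda_\gamma'+\eta/2$) it uses the continuity of the principal eigenvalue with respect to the domain for the \emph{regularized} potential $r_{\epsilon_0}^{-\gamma}$, plus the monotonicity of $\epsilon\mapsto\bar\lambda^\epsilon_\gamma$ on the fixed annulus, to squeeze $\bar\lambda_\gamma\left(B(0,1)\setminus \overline{B(0,\delta_0)}\right)\leq\bar\lambda_\gamma'+\eta$; no new compactness argument is needed. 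You instead work directly with the annuli: you build principal eigenfunctions $u_\delta$ there, normalize, prove uniform estimates and pass to the limit, i.e.\ you rerun an analogue of the proof of Theorem \ref{th3} with domain truncation in place of potential regularization. What your route buys is independence from Theorem \ref{th3} and from the domain-continuity of the regularized eigenvalues; what it costs is that you must supply several ingredients the paper's proof avoids: (i) existence and \emph{radial symmetry} of the principal eigenfunction on the annulus (existence is \cite{BD1}, radiality needs simplicity/symmetry, cf.\ \cite{BD2}, \cite{BLP}); (ii) the claim that $u_\delta'\leq 0$ on all of $[r_\delta,1]$, which requires the no-interior-minimum/sign-change argument as in Lemma \ref{fabiana1} and is not automatic from choosing a maximum point; (iii) the interior equicontinuity on compacts of $(0,1]$, which is not literally Theorem \ref{compact} (stated for functions on the whole punctured ball with globally bounded right-hand side) but follows from the ODE representation of Remark \ref{remC1alpha} together with a Gronwall-type bound starting from $m(r_\delta)=0$, $m=|u_\delta'|^\alpha u_\delta'$. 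These are fixable details, not gaps. One small overstatement: the uniform H\"older control extends $u$ continuously to $\overline{B(0,1)}$ only in the case $r_\delta\to 0$; when $r_\delta\to r_*>0$ your estimate lives on $[r_*,1]$ only — but continuity at the origin is not required for admissibility in \eqref{lambdaprime}, so the squeeze $\bar\lambda_\gamma'\leq\bar\lambda_\gamma\leq\mu\leq\bar\lambda_\gamma'$ stands as you wrote it.
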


      \begin{proof} 
 
 We observe that the function     $\delta \mapsto  \bar \lambda_\gamma \left( B(0,1)\setminus B(0, \delta)\right) $ is monotone increasing. Moreover, by their own definition, we have that
 $$
\bar \lambda_\gamma'\leq  \bar \lambda_\gamma \leq \bar \lambda_\gamma \left( B(0,1)\setminus B(0, \delta)\right)\, \quad \hbox{ for all } \delta\geq0\, .
 $$
On the other hand,     by Theorem \ref{th3},  for any $\eta >0$ there exists $\epsilon_0>0$ such that  
   $$ \bar \lambda_\gamma^{ \epsilon_0}  \leq \bar \lambda _\gamma' + { \eta \over 2}\, .$$
Furthermore,   by using the continuity of the principal eigenvalue with respect to the domain for equations with regular coefficients, there exists $\delta_0>0$ such that 
   $$\bar \lambda_\gamma^{\epsilon_0} ( B(0,1))\setminus B(0, \delta_0)) \leq \bar \lambda _\gamma ^{\epsilon_0}   + {\eta\over 2} \leq  \bar \lambda_\gamma' +  \eta \, .$$
Now,  since  $\epsilon \mapsto \bar  \lambda^\epsilon_\gamma \left(B(0,1))\setminus B(0, \delta_0)\right) $ decreases when $\epsilon$ decreases to zero, one gets 
  $$  \bar \lambda_\gamma \left( B(0,1)\setminus B(0, \delta_0)\right)\leq \bar \lambda_\gamma^{\epsilon_0} \left( B(0,1))\setminus B(0, \delta_0)\right) \leq \bar \lambda_\gamma' + \eta\, ,$$
   which gives the conclusion.
   
 \end{proof}

       \section{The case $\gamma = \alpha+2$}
        In this section we suppose that the operator $F$ is ${ \cal M}^+$. We will sometimes denote  $\bar \lambda_\gamma$ or $\bar \lambda_\gamma ( | \nabla \cdot |^\alpha {\cal M}^+)$ the eigenvalue. 
        Let us introduce the space of functions 
 $$\mathcal{V}= \left\{ u\in C^2([0,1])\, : u'(0)=0\, ,\ {\rm supp}(u) \hbox{ compact in } [0,1)\right\}\, ,
 $$
  endowed with the norm
 $$
 \| u\| =\left( \int_0^1 |u'|^{\alpha+2} r^{(\tilde{N}_+-1)(1+\alpha)} dr\right)^{1/\alpha+2}\, ,
 $$
and let us denote by $\mathcal{H}^1_0$  the closure of $\mathcal{V}$. 
We define    $$\bar\lambda_{var, \gamma}=\inf_{ v\in {\cal H}_o^1, \int_0^1 |v|^{\alpha+2} r^{(\tilde{N}_+-1)(1+\alpha)-\gamma} =1 } \int_0^1 |v^\prime |^{\alpha+2}  r^{(\tilde{N}_+-1)(1+\alpha)} dr$$

\begin{theo}\label{alpha+2} One has 
$$ \bar \lambda_{var, 2+\alpha}   =   \left( {(\tilde N_+-2)(\alpha+1)\over \alpha +2}\right)^{\alpha+2}:= \tau^{ \alpha+2} .  $$

\end{theo}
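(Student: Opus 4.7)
The claim is a weighted one-dimensional Hardy inequality. Setting $p=\alpha+2$ and $a=(\tilde N_+-1)(1+\alpha)$, the variational problem reads
\[
\bar\lambda_{var,2+\alpha}=\inf_{v\in\mathcal{H}_0^1}\frac{\int_0^1 |v'|^p r^a\,dr}{\int_0^1 |v|^p r^{a-p}\,dr},
\]
and the claimed value is $\bigl(\tfrac{a-p+1}{p}\bigr)^p=\tau^{\alpha+2}$, since $a-p+1=(\tilde N_+-2)(1+\alpha)>0$ by the standing assumption $\tilde N_+>2$. The plan is to prove the two inequalities separately.

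For $\bar\lambda_{var,2+\alpha}\geq\tau^{\alpha+2}$, I would proceed as in the classical Hardy argument. Fix $v\in\mathcal{V}$ (extended by $0$ past its support in $[0,1)$) and use the identity $\frac{d}{dr}r^{a-p+1}=(a-p+1)r^{a-p}$ to integrate by parts; since $v$ has compact support in $[0,1)$ and $a-p+1>0$, all boundary contributions vanish, giving
\[
(a-p+1)\int_0^1 |v|^{p} r^{a-p}\,dr = -p\int_0^1 |v|^{p-2} v\, v'\, r^{a-p+1}\,dr.
\]
Now apply H\"older's inequality with exponents $p/(p-1)$ and $p$, after splitting the weight as $r^{a-p+1}=r^{(a-p)(p-1)/p}\cdot r^{a/p}$ (a one-line algebraic check). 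The resulting estimate simplifies to $\tau^{\alpha+2}\int_0^1 |v|^p r^{a-p}\,dr\leq \int_0^1 |v'|^p r^a\,dr$, and the extension to all of $\mathcal{H}_0^1$ follows by density.

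For the reverse inequality, I would use explicit test functions built from the formal extremal $r^{-\tau}$. Consider $u_\epsilon(r)=r^{\epsilon-\tau}$ multiplied by a smooth cutoff near $r=1$ and truncated near $r=0$ at some small scale $\delta_\epsilon$. A direct calculation shows that both the numerator and denominator of the Rayleigh quotient evaluated on the pure function $r^{\epsilon-\tau}$ reduce to integrals of $r^{\epsilon(\alpha+2)-1}$ (with coefficient $(\tau-\epsilon)^{\alpha+2}$ in the numerator), so their ratio equals exactly $(\tau-\epsilon)^{\alpha+2}$. Choosing $\delta_\epsilon\to 0$ fast enough (for instance $\delta_\epsilon=e^{-1/\epsilon}$) makes the truncation error negligible compared with the leading $\tfrac{1}{\epsilon(\alpha+2)}$ divergence of each integral; letting $\epsilon\to 0^+$ yields $\bar\lambda_{var,2+\alpha}\leq\tau^{\alpha+2}$.

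The main obstacle I anticipate is the careful bookkeeping in the upper-bound construction: one must ensure the truncated-and-cutoff function belongs to $\mathcal{H}_0^1$ (or is approximable in the $\mathcal{V}$-norm), and that all the error terms produced by the two cutoffs are genuinely of lower order than the main $\epsilon^{-1}$ blow-up. The lower bound, by contrast, is essentially mechanical once the integration by parts and the H\"older splitting of the weight are set up correctly.
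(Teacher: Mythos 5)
Your proof is correct, but it takes a genuinely different route from the paper on both halves. For the lower bound $\bar\lambda_{var,2+\alpha}\geq\tau^{\alpha+2}$, the paper does not use the classical integration-by-parts-plus-H\"older Hardy argument you propose; instead it exploits the explicit ground state $u(r)=r^{-\tau}$, which solves $\frac{d}{dr}\bigl(|u'|^\alpha u' r^{(\tilde N_+-1)(1+\alpha)}\bigr)=-\tau^{\alpha+2}u^{1+\alpha}r^{(\tilde N_+-1)(1+\alpha)-2-\alpha}$, multiplies this identity by $|v|^{2+\alpha}/u^{1+\alpha}$, integrates by parts, and concludes with the Young inequality $XY\leq \frac{1}{p}|Y|^p+\frac{1}{p'}|X|^{p'}$ (a Picone/ground-state substitution argument); your direct splitting of the weight $r^{a-p+1}=r^{(a-p)(p-1)/p}\,r^{a/p}$ reaches the same sharp constant more classically and with no auxiliary function, at the small cost of checking that the boundary term at $r=0$ vanishes (it does, since $a-p+1=(\tilde N_+-2)(1+\alpha)>0$ and $v\in\mathcal V$ is bounded) and of a routine density step. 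For the upper bound, the paper tests with $r^{-\tau+\epsilon}(-\log r)$, which vanishes at $r=1$ automatically so no boundary cutoff is needed, but then requires the $I$, $J_\epsilon$, $K_\epsilon$ bookkeeping to show the quotient is $\tau^{2+\alpha}(1+c\epsilon)$; you test with the pure power $r^{\epsilon-\tau}$, for which the quotient is exactly $(\tau-\epsilon)^{\alpha+2}$, and pay instead with a cutoff at $r=1$ and a truncation at scale $\delta_\epsilon$ near $0$ (plus a smoothing step to land in the closure of $\mathcal V$, since its elements are $C^2$ with $u'(0)=0$). One small imprecision: with $\delta_\epsilon=e^{-1/\epsilon}$ the truncated portion of each integral is the fixed fraction $e^{-(\alpha+2)}$ of the leading term $\frac{1}{\epsilon(\alpha+2)}$, so it is not literally negligible; the argument still works because numerator and denominator lose the same proportion (and the constant plateau only adds to the denominator), or you can simply take $\delta_\epsilon$ decaying faster, e.g. $e^{-1/\epsilon^2}$, to make your statement exact. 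Overall your route is more elementary and self-contained, while the paper's ground-state argument is the one that generalizes naturally to the operator-theoretic setting it uses elsewhere.
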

\begin{proof}

 We begin to prove that $\bar \lambda_{var, 2+\alpha}   \geq  \left( {(\tilde N_+-2)(\alpha+1)\over \alpha +2}\right)^{\alpha+2}$ or equivalently that 
  for any $v\in { \cal H}_o^1$
  $$\tau^{ \alpha+2}  \int _0^1{v^{2+\alpha} \over r^{2+\alpha}} r^{( \tilde N_+-1)(1+\alpha)} dr\leq \int_0^1 |v^\prime |^{2+\alpha}  r^{( \tilde N_+-1)(1+\alpha)}dr .$$
  
   For that aim we consider 
   
   $$u(r) = r^{-\tau}$$ where 
   $\tau = {(\tilde N_+-2)(\alpha+1)\over 2+\alpha}$. Then 
   $u$ satisfies for any $r>0$ in $B(0,1)$
   $$ {d\over dr} ( |u^\prime |^\alpha u^\prime r^{(\tilde N_+-1)(1+\alpha)} ) = - \tau^{ \alpha+2} u^{1+\alpha}  r^{(\tilde N_+-1)(1+\alpha)-2-\alpha}.$$
   
   Let us multiply this equation by ${|v|^{2+\alpha} \over u^{1+\alpha}}$ , for $v\in {\cal V}$. Using the definition of $u$ one observes that $\lim { |u^\prime |^\alpha u^\prime \over u^{1+\alpha}}r^{(\tilde N_+-1)(1+\alpha)} = 0$ since $\tilde N_+>2$.  \ Integrating by parts one has 
   \begin{eqnarray*}
   (\alpha+2)  \int _0^1 |u^\prime |^\alpha u^\prime |v|^\alpha v u^{-1-\alpha}v^\prime r^{(\tilde N_+-1)(1+\alpha)}  dr &-&(\alpha+1) \int _0^1 |u^\prime |^{‘\alpha+2} |v|^{\alpha +2}  u^{-2-\alpha} r^{(\tilde N_+-1)(1+\alpha)}  dr  \\
  & =& \tau^{ \alpha+2} \int_0^1 |v|^{2+\alpha}  r^{(\tilde N_+-1)(1+\alpha)-2-\alpha}.
  \end{eqnarray*}

    We denote $X = |u^\prime |^\alpha u^\prime |v|^\alpha v u^{-1-\alpha}$, $Y = v^\prime$, $ p = { \alpha+2}$ and $p^\prime = { \alpha+2 \over \alpha+1}$. Using the convexity inequality
    
     $$XY \leq {1\over p} |Y|^p+ {1\over p^\prime } |X|^{p^\prime}.$$  
     We  obtain 
     $$ \tau^{ \alpha+2}  (\alpha+2)^{-1} \int_0^1 |v|^{2+\alpha}  r^{(\tilde N_+-1)(1+\alpha)-2-\alpha}\leq {1\over \alpha+2} \int_0^1 |v^\prime |^{ \alpha+2}  r^{(\tilde N_+-1)(1+\alpha)} dr$$
     Passing to the infimum one gets 
     $ \bar \lambda_{var, 2+\alpha} \geq \tau^{\alpha+2} $. 
     
           We prove the reverse inequality.  Let for $\epsilon >0$, $\epsilon < \inf ({\tau \over 2}, {\tau \log 2 \over 4})$ 
           $ u (r) = r^{-\tau+\epsilon} ( -\log r)$. We want  to prove that  for some constant $c$
           $$ \int_0^1 |u^\prime |^{2+\alpha} r^{( \tilde N_+-1)(1+\alpha})
dr \leq \tau^{2+\alpha}   \int _0^1|u |^{2+\alpha} r^{( \tilde N_+-1)(1+\alpha)-2} (1+ c \epsilon).$$          
           We define 
           $$I= \int_0^1 (-\log r)^{2+\alpha} r^{-1+ \epsilon (2+\alpha)} dr.$$
           Note that 
            \begin{equation}\label{Itau}
             I  \geq \int _0^{1\over 2} ( \log 2)^{2+\alpha}  r^{-1+ (2+\alpha) \epsilon} dr = {C\over  \epsilon}.
             \end{equation}

             We also introduce          $$J_\epsilon  = \int_0^{e^{-1\over \tau-\epsilon}} ((\tau-\epsilon) ( -\log r)+1)^{2+\alpha} r^{-1+ \epsilon (2+\alpha)} dr, $$
            $${\rm and} \ K_\epsilon  = \int_{e^{-1\over \tau-\epsilon }}^1 ((\tau-\epsilon)  ( -\log r)+1)^{2+\alpha} r^{-1+ \epsilon (2+\alpha)} dr.$$
            
             We use the inequality for $0\leq  u\leq 1$
             $$(1+u)^{2+\alpha} \leq 1+ (2+\alpha) 2^{1+\alpha} u$$ 
               to get 
               \begin{equation}\label{Ltau}J_\epsilon  \leq \tau^{2+\alpha} I  + (2+\alpha) 2^{1+\alpha} \tau^{1+\alpha} \int_0^{e^{-1\over \tau-\epsilon}} ( -\log r)^{1+\alpha} r^{-1+ \epsilon (2+\alpha)} dr.
               \end{equation} 
               
             In the  sequel   $c(\tau, \alpha) $  denotes some positive constant depending on $\tau$ and $\alpha$ which can vary from one line to another .  Integrating by parts  the second integral in \eqref{Ltau}, one  gets 
               $$ \int_0^{e^{-1\over \tau-\epsilon }}  (-\log r)^{1+\alpha}r^{-1+2\epsilon} dr 
               \leq c(\tau, \alpha) (1+\epsilon I )\leq c( \tau, \alpha) \epsilon I $$
                by using \eqref{Itau}. From this one derives 
                $$J_\epsilon  \leq \tau^{2+\alpha}  I (1+ c( \tau, \alpha) \epsilon).$$
                
                On the other hand one has 
                
                 $$K_\epsilon  \leq  \int_{e^{-1\over \tau-\epsilon }}^1 ((\tau-\epsilon)  ( -\log r)+1)^{2+\alpha} r^{-1} dr={1\over \tau-\epsilon } \left[ \frac{((\tau-\epsilon)  ( -\log r)+1)^{3+\alpha}}{ (-(3+\alpha))}\right]_{e^{-1\over \tau-\epsilon} }^1\leq c( \tau, \alpha) \epsilon I $$
           
            We have obtained 
            $$ \int_0^1 |u^\prime |^{2+\alpha} r^{( \tilde N_+-1)(1+\alpha)}
dr  = J_\epsilon + K_\epsilon  \leq \tau^{2+\alpha} I ( 1+ c( \tau, \alpha)\epsilon) $$
            which is the desired result. 

\end{proof}
     
    In order to establish the relationship between  $\bar \lambda_{\gamma,var}$ and $\bar \lambda_\gamma$ we need to investigate on the monotonicity and convexity properties of the functions $u$ realizing the infimum in the definition of $\bar \lambda_{\gamma,var}$.

              \begin{prop}\label{propvar}
        Let $\gamma >1$. 
         Let $v_\gamma$ be in ${ \cal H}_o^1 ( B(0,1))$, $>0$  which realizes the minimum  defining $\bar \lambda_{var, \gamma}$. 
               Then 
          $v_\gamma $ is bounded, $v_\gamma^\prime <0$ and $(|v_\gamma ^\prime |^\alpha v_\gamma^\prime )^\prime >0$.
          \end{prop}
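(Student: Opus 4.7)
The plan is to derive the Euler--Lagrange equation associated with the constrained variational problem and then analyze the resulting radial ODE directly. Since the functional and the constraint depend only on $|v|$, I may replace $v := v_\gamma$ by $|v|$ and assume $v\ge 0$; a strong maximum principle argument then gives $v>0$ on $(0,1)$. Standard calculus of variations yields that $v\in C^2$ wherever $v'\ne 0$, satisfies $v(1)=0$, and solves
$$
\bigl[|v'|^\alpha v'\,r^{(\tilde N_+-1)(1+\alpha)}\bigr]'=-\bar\lambda_{var,\gamma}\,v^{1+\alpha}\,r^{(\tilde N_+-1)(1+\alpha)-\gamma}
$$
on $(0,1)$, with Lagrange multiplier equal to $\bar\lambda_{var,\gamma}$.

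To prove $v'<0$, I introduce $w(r):=|v'|^\alpha v'\,r^{(\tilde N_+-1)(1+\alpha)}$. The Euler--Lagrange equation gives $w'<0$, so $w$ is strictly decreasing and possesses a limit at $0$. If this limit were a nonzero constant $c$, then $|v'(r)|\gtrsim |c|^{1/(1+\alpha)}\,r^{-(\tilde N_+-1)}$ near $0$, whence $|v'|^{\alpha+2}\,r^{(\tilde N_+-1)(1+\alpha)}\gtrsim r^{-(\tilde N_+-1)}$, which fails to be integrable at $0$ since $\tilde N_+>2$; this contradicts $v\in\mathcal{H}_0^1$. Therefore $w(0^+)=0$, and integration yields
$$
w(r)=-\bar\lambda_{var,\gamma}\int_0^r v^{1+\alpha}(s)\,s^{(\tilde N_+-1)(1+\alpha)-\gamma}\,ds<0,
$$
so $v'<0$ strictly on $(0,1)$.

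For the convexity $(|v'|^\alpha v')'>0$, a direct expansion of the Euler--Lagrange equation combined with $|v'|^\alpha v'=-|v'|^{1+\alpha}$ gives
$$
(|v'|^\alpha v')'=\frac{(\tilde N_+-1)(1+\alpha)}{r}|v'|^{1+\alpha}-\bar\lambda_{var,\gamma}\frac{v^{1+\alpha}}{r^\gamma}.
$$
Substituting the integrated form $|v'|^{1+\alpha}=\bar\lambda_{var,\gamma}\,r^{-(\tilde N_+-1)(1+\alpha)}\int_0^r v^{1+\alpha}(s)s^\beta\,ds$, with $\beta:=(\tilde N_+-1)(1+\alpha)-\gamma$, the desired positivity reduces to the inequality
$$
(\tilde N_+-1)(1+\alpha)\int_0^r v^{1+\alpha}(s)s^\beta\,ds>v^{1+\alpha}(r)\,r^{\beta+1}.
$$
The strict decrease of $v$ from the previous step gives $v^{1+\alpha}(s)>v^{1+\alpha}(r)$ for $s<r$, so the integral on the left exceeds $v^{1+\alpha}(r)\,r^{\beta+1}/(\beta+1)$. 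Since $(\tilde N_+-1)(1+\alpha)=\beta+\gamma$ and $\gamma>1$, the ratio $(\beta+\gamma)/(\beta+1)$ exceeds $1$, which closes the estimate.

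For the boundedness, once the monotonicity and convexity are established, a direct computation using the form of $\mathcal{M}^+$ on radial decreasing convex functions shows that $v$ solves the PDE $|v'|^\alpha\mathcal{M}^+(D^2 v)+\mu\,v^{1+\alpha}/r^\gamma=0$ on $B(0,1)\setminus\{0\}$ with $\mu=\Lambda\bar\lambda_{var,\gamma}/(1+\alpha)$ and $v(1)=0$. The definition of $\bar\lambda_\gamma'$ as a supremum gives $\mu\le\bar\lambda_\gamma'$, and the reverse inequality follows by comparison, via Theorem \ref{maxpgamma} and Proposition \ref{compasous}, localized on annuli $\{\delta<r<1\}$ and then taking $\delta\to 0$. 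Hence $\mu=\bar\lambda_\gamma'$, and by the simplicity of the first eigenvalue (Proposition \ref{simple}, whose proof handles also the case when one of the eigenfunctions blows up at the origin), $v$ is a constant multiple of the bounded eigenfunction produced by Theorem \ref{exieig3}, and is therefore bounded. The main obstacle I foresee is precisely this last identification: the comparison arguments must be run carefully, since $v$ is a priori only shown to be continuous on $(0,1)$ and not necessarily bounded at the origin.
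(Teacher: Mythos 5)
Your derivation of the Euler--Lagrange equation, your proof that $w(r):=|v'|^\alpha v'\,r^{(\tilde N_+-1)(1+\alpha)}$ tends to $0$ at the origin (via finite energy rather than the paper's test-function argument), and hence that $v'<0$, are correct. Your convexity argument is also correct and is actually a nice alternative to the paper's: the paper introduces the auxiliary function $y=(1+\alpha)(\tilde N_+-1)|v'|^\alpha v'+\bar\lambda_{var,\gamma}v^{1+\alpha}r^{1-\gamma}$ and shows $(yr^{(\tilde N_+-1)(1+\alpha)})'\le 0$, which uses the boundedness of $v$ to control the limit at $0$, whereas your reduction to
$$(\tilde N_+-1)(1+\alpha)\int_0^r v^{1+\alpha}(s)s^\beta\,ds> v^{1+\alpha}(r)\,r^{\beta+1},\qquad \beta=(\tilde N_+-1)(1+\alpha)-\gamma,$$
combined with the monotonicity of $v$ and $\gamma>1$, needs only $w(0^+)=0$ and works before boundedness is known.

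The boundedness part, however, has a genuine gap. To identify $\mu=\Lambda\bar\lambda_{var,\gamma}/(1+\alpha)$ with $\bar\lambda_\gamma'$ you invoke Theorem \ref{maxpgamma} and Proposition \ref{compasous}; but Theorem \ref{maxpgamma} is stated for, and its proof at the origin requires, a subsolution in $C(\overline{B(0,1)})$, and Proposition \ref{compasous} likewise works with continuous (hence bounded) functions, with the finiteness of $\sup u/v$ and the behaviour at $0$ resting on that. Using them for the possibly unbounded minimizer is exactly assuming what you must prove; indeed in the paper the identity $\bar\lambda_\gamma=\frac{\Lambda}{1+\alpha}\bar\lambda_{var,\gamma}$ (Corollary \ref{corvargamma}) is proved \emph{after} Proposition \ref{propvar} and uses its boundedness conclusion. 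Nor can the comparison be rescued by working on annuli $\{\delta<r<1\}$ and letting $\delta\to 0$: the inner boundary values of an unbounded $v$ cannot be dominated, and no maximum-principle argument that ignores the finite-energy structure can succeed, because $u(r)=r^{2-\tilde N_+}-1$ is a positive, unbounded, radial function with $|\nabla u|^\alpha\mathcal{M}^+(D^2u)+\mu u^{1+\alpha}r^{-\gamma}\ge 0$ and $u(1)=0$ for every $\mu\ge 0$, so the maximum principle simply fails for unbounded subsolutions. What is needed (and what the paper does) is a quantitative bootstrap on the integrated Euler equation: from $w(r)\ge w(1)$ one gets $v(r)\le c\,r^{2-\tilde N_+}$, then inserting this into $w(r)=-\bar\lambda_{var,\gamma}\int_0^r v^{1+\alpha}s^\beta ds$ improves the estimate to $v(r)\le d_j\,r^{2-\tilde N_++j\frac{2+\alpha-\gamma}{1+\alpha}}$, and iterating (with a logarithmic correction when $\frac{(\tilde N_+-2)(1+\alpha)}{2+\alpha-\gamma}$ is an integer) yields boundedness after finitely many steps. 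Your final appeal to Proposition \ref{simple} would only become available once $\mu=\bar\lambda_\gamma'$ is known, which is precisely the step that cannot be obtained without the boundedness.
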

          \begin{proof}
           Since $v_\gamma $ is a minimum, it satisfies for any $u\in {\cal H}_o^1$
           \begin{equation}\label{eqmin} \int_0^1 |v_\gamma ^\prime |^\alpha v_\gamma ^\prime u^\prime r^{(\tilde{N}_+-1)(1+\alpha)} dr= \bar\lambda_{var, \gamma} \int_0^1 |v_\gamma |^{\alpha}v_\gamma  u r^{(\tilde{N}_+-1)(1+\alpha)} dr 
           \end{equation} 
         and then  in the distribution sense
           \begin{equation}\label{euler} ( |v_\gamma ^\prime |^\alpha v_\gamma ^\prime )^\prime  + (1+\alpha) {\tilde N_+-1\over r} |v_\gamma ^\prime |^\alpha v_\gamma^\prime = -\bar\lambda_{var, \gamma}v_\gamma^{1+\alpha} r^{-\gamma}.
           \end{equation} 
           In particular by the regularity properties of solutions of equations in the distribution sense, one gets $v_\gamma\in { \cal C}_{1, \alpha}$. 
                     Let us multiply the Euler equation \eqref{euler} above by $u  r ^{( \tilde N_+-1)(1+\alpha)} $ where $u\in {\cal H}_0^1$. Integrating by parts on $]\epsilon,  r[$ we obtain  
           \begin{eqnarray*}
            \int _\epsilon ^r |v_\gamma^\prime |^\alpha(s) v_\gamma^\prime (s)u^\prime(s)  s^{( \tilde N_+-1)(1+\alpha)}  ds &+& |v_\gamma^\prime |^\alpha v_\gamma^\prime (\epsilon) u( \epsilon) \epsilon ^{( \tilde N_+-1)(1+\alpha)} 
           \\
           &=& \bar\lambda_{var, \gamma} \int _\epsilon ^r  v_\gamma^{1+\alpha}(s)   u(s) s^ {( \tilde N_+-1)(1+\alpha)-\gamma} ds .
           \end{eqnarray*}
            Letting $\epsilon$ go to zero, since $u(0)$ can be chosen $\neq 0$, we obtain  from \eqref{eqmin} that 
            \begin{equation}\label{epsilon}|v_\gamma^\prime |^\alpha v^\prime_\gamma (\epsilon) \epsilon ^{( \tilde N_+-1)(1+\alpha)} \rightarrow 0.
            \end{equation}

             We now prove that $v_\gamma$ is bounded. 
            Since $v_\gamma >0$, by the equation above $|v^\prime_\gamma|^\alpha v^\prime (r) r^{(\tilde N_+-1)(1+\alpha)}\leq 0$, hence $v_\gamma$ is  decreasing $v_\gamma ( r) \leq v_\gamma (0)$.                         Thus, we have
 $v_\gamma^\prime \geq -c r^{1-\tilde N_+}$ and this implies that 
$$v_\gamma \leq c r^{2-\tilde N_+}$$
 which in turn implies 
 $$ |v_\gamma^\prime |^\alpha v_\gamma ^\prime (r) r^{ (\tilde N_+-1)(1+\alpha)} \geq -c \int v_\gamma^{1+\alpha} r^{ (\tilde N_+-1)(1+\alpha)-\gamma} \geq -c r^{2+\alpha -\gamma}$$
or equivalently  for some constant $c_1$ 
$$v_\gamma^\prime \geq -c_1 r^{{2+\alpha-\gamma\over \alpha +1} + 1-\tilde N_+}.$$
Integrating   one gets for some constant $d_1$   
 $$
 v_\gamma(r)\leq   d_1   r^{{2+\alpha-\gamma\over \alpha +1} + 2-\tilde N_+}
 $$
 Iterating the above inequalities, we obtain that for all integers $j\geq0$ such that $2-\tilde{N}_++j(2+\alpha -\gamma)<0$, there exist positive constants $c_j$ and $d_j$ satisfying
\begin{equation}\label{estj}
 v^\prime_\gamma(r)  \geq -c_j r^{1-\tilde N_++j({2+\alpha-\gamma\over 1+\alpha})}
\end{equation}
 
\begin{equation}\label{dj}
  v_\gamma(r)\leq   d_j   r^{2-\tilde{N}_++j({2+\alpha-\gamma\over 1+\alpha})}
\end{equation}
            Indeed,    by \eqref{epsilon}  inequality \eqref{estj} is true for $j=0$, and implies \eqref{dj} for $j=0$ by integrating. We now suppose  that \eqref{dj} is true for $j$ , then 
               \begin{eqnarray*}
                |v_\gamma^\prime |^\alpha v_\gamma^\prime r^{(\tilde N_+-1)(1+\alpha)} &=& -\bar\lambda_{var, \gamma} \int_0^r  v_\gamma^{1+\alpha} r^ {( \tilde N_+-1)(1+\alpha)-\gamma} \\
                & \geq& -c \int_0^r d_j^{1+\alpha}  r^{(2-\tilde N_+)(1+\alpha)+ j (2+\alpha-\gamma)}r^ {( \tilde N_+-1)(1+\alpha)-\gamma}\\
                & \geq& -c r^{(j+1)(2+\alpha-\gamma)}
                \end{eqnarray*}
                
                which implies 
                $$v_\gamma^\prime \geq -c r^{ 1-\tilde N_++ (j+1) {2+\alpha-\gamma\over 1+\alpha}} .$$
                If $1-\tilde N_++ (j+1) {2+\alpha-\gamma\over 1+\alpha}<0$, 
                 integrating this inequality we obtain \eqref{dj} for $j+1$. 
                 
                 Now, if there exists $j\in \N$ such that $2-\tilde{N}_++j{2+\alpha-\gamma\over 1+\alpha} =0$, i.e. if $\frac{(\tilde{N}_+-2)(1+\alpha)}{2+\alpha -\gamma}\in \N$, then, by integrating the estimates obtained  at the $(j-1)$-th step, we obtain
$$
v_\gamma(r)\geq -c_j \, r^{-1}\, ,\quad v_\gamma(r)\leq d_j\, (-\ln r)\, .
$$
Integrating once more, we finally deduce
$$
v'_\gamma(r)\geq -c_{j+1} (-\ln r) r^{1-\gamma}\Longrightarrow v_\gamma(r)\leq d_{j+1}=\frac{c_{j+1}}{(2-\gamma)^2}\, .
$$
On the other hand, if $\frac{(\tilde{N}_+-2)(1+\alpha)}{2+\alpha -\gamma}$ is not integer, by integrating estimates \eqref{estj} for $j=\left[ \frac{(\tilde{N}_+-2)(1+\alpha)}{2+\alpha -\gamma}\right]$, we obtain
$$
v_\gamma^\prime (r)\geq -c_{j+1} r^{1-\tilde{N}_+ +(j+1)({2+\alpha-\gamma\over 1+\alpha} )} \Longrightarrow v_\gamma(r)\leq d_{j+1}=
\frac{c_{j+1}}{2-\tilde{N}_++(j+1)({2+\alpha-\gamma\over 1+\alpha})}\, .
$$
This shows  that, in any case, $v_\gamma$ is bounded.                  We need to prove that $(|v_\gamma^\prime |^\alpha v_\gamma^{\prime})^ \prime \geq 0$. For that aim we introduce 
                 $ y =(1+\alpha) ( \tilde N_+-1)|v_\gamma^\prime |^\alpha v_\gamma^\prime + \bar \lambda_{var, \gamma}  v_\gamma ^{1+\alpha}  r^{1-\gamma}$.                  It is sufficient to prove that $y \leq 0$. One has $$y^\prime = (\tilde N_+-1) (1+\alpha)( |v_\gamma^\prime |^\alpha v_\gamma^\prime )^\prime + \bar\lambda_{var, \gamma}(1+\alpha) v_\gamma ^\alpha v_\gamma^\prime   r^{1-\gamma} 
                  + (1-\gamma) r^{-\gamma}  v_\gamma ^{1+\alpha} 
                  = (\tilde N_+-1) (1+\alpha) { -y\over r} +( \leq 0)$$
                  hence 
                   $$ (y r^{(\tilde N_+-1) (1+\alpha) })^\prime \leq 0$$
                    and then since  $ {(\tilde N_+-1) (1+\alpha) +1-\gamma}>0$,   $\lim_{r\to 0} v_\gamma(r) ^{1+\alpha} r^{1-\gamma} r^{(\tilde N_+-1) (1+\alpha) }(r)=0$ 
                                       and   since we proved above that $\lim_{r\rightarrow 0} |v_\gamma^\prime |^\alpha v_\gamma^\prime 
                    r^{ \tilde N_+-1)(1+\alpha)} = 0$,  we get $y\leq 0$, hence $(|v_\gamma^\prime |^\alpha v_\gamma^\prime )^\prime \geq 0$. 
                 \end{proof}
                   \begin{cor}\label{corvargamma}
           Let $\gamma\in ]1,2+\alpha [$. Then
           $$\bar \lambda_\gamma  (| \nabla \cdot |^\alpha \mathcal{M}^+)=  {\Lambda\over \alpha+1}  \, \bar\lambda_{\gamma, var}\, .$$
                      \end{cor}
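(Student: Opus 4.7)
My strategy is to show that the variational minimizer $v_\gamma$ produced by Proposition \ref{propvar} is itself an eigenfunction of $|\nabla\cdot|^\alpha\mathcal{M}^+$ at the value $\mu:=\frac{\Lambda}{\alpha+1}\bar\lambda_{\gamma,var}$, and then to identify $\mu$ with $\bar\lambda_\gamma'$ via the supremum definition \eqref{lambdaprime} and the maximum principle Theorem \ref{maxpgamma}. Corollary \ref{delta} then upgrades $\bar\lambda_\gamma'$ to $\bar\lambda_\gamma$ and closes the proof.

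The first step is to convert the distributional Euler equation \eqref{euler} into the fully nonlinear PDE. Proposition \ref{propvar} supplies the sign information $v_\gamma^\prime<0$ and $(|v_\gamma^\prime|^\alpha v_\gamma^\prime)^\prime\geq 0$; since $|v_\gamma^\prime|^\alpha v_\gamma^\prime=-|v_\gamma^\prime|^{1+\alpha}$ when $v_\gamma^\prime<0$, the second inequality rewrites as $|v_\gamma^\prime|^{1+\alpha}$ being non-increasing, i.e.\ $v_\gamma^{\prime\prime}\geq 0$. The eigenvalues of $D^2v_\gamma$ are therefore $v_\gamma^{\prime\prime}\geq 0$ (once) and $v_\gamma^\prime/r<0$ ($N-1$ times), so the radial Pucci sup formula collapses to
$$\mathcal{M}^+(D^2v_\gamma)=\Lambda v_\gamma^{\prime\prime}+\lambda(N-1)\frac{v_\gamma^\prime}{r}=\Lambda\left[v_\gamma^{\prime\prime}+\frac{\tilde N_+-1}{r}v_\gamma^\prime\right].$$
Multiplying by $|v_\gamma^\prime|^\alpha$, using $(1+\alpha)|v_\gamma^\prime|^\alpha v_\gamma^{\prime\prime}=(|v_\gamma^\prime|^\alpha v_\gamma^\prime)^\prime$, and inserting the Euler identity \eqref{euler} scaled by $\Lambda/(1+\alpha)$, I arrive at
$$|\nabla v_\gamma|^\alpha\mathcal{M}^+(D^2v_\gamma)+\mu\,v_\gamma^{1+\alpha}r^{-\gamma}=0\qquad\text{in }B(0,1)\setminus\{0\}.$$

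Next, $v_\gamma\in\mathcal{C}_{1,\alpha}$ by Proposition \ref{propvar}, it is radial and bounded, continuous on $[0,1]$ (bounded and monotone decreasing up to $r=0$), satisfies $v_\gamma(1)=0$ by construction, and is strictly positive in $B(0,1)$: one may replace any minimizer by its absolute value and then the strong maximum principle from \cite{BD2} applied to the PDE just derived yields $v_\gamma>0$. Thus $v_\gamma$ is an admissible competitor in \eqref{lambdaprime} at level $\mu$, giving $\bar\lambda_\gamma'\geq\mu$. For the opposite inequality, I argue by contradiction: should $\mu<\bar\lambda_\gamma'$, the displayed equality yields $|\nabla v_\gamma|^\alpha\mathcal{M}^+(D^2v_\gamma)+\mu|v_\gamma|^\alpha v_\gamma r^{-\gamma}\geq 0$ with $v_\gamma(1)=0$, so Theorem \ref{maxpgamma} would force $v_\gamma\leq 0$, contradicting $v_\gamma>0$. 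Hence $\bar\lambda_\gamma'=\mu$, and Corollary \ref{delta} concludes $\bar\lambda_\gamma=\frac{\Lambda}{\alpha+1}\bar\lambda_{\gamma,var}$.

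The whole argument hinges on the first step, whose crux is the identification of $|\nabla v_\gamma|^\alpha\mathcal{M}^+(D^2v_\gamma)$ with the variational operator $\frac{\Lambda}{1+\alpha}(|v_\gamma^\prime|^\alpha v_\gamma^\prime)^\prime+\Lambda\frac{\tilde N_+-1}{r}|v_\gamma^\prime|^\alpha v_\gamma^\prime$. Without the convexity $v_\gamma^{\prime\prime}\geq 0$, $\mathcal{M}^+$ would weigh the $v_\gamma^{\prime\prime}$ term by $-\lambda$ rather than $\Lambda$ and one would produce a genuinely different operator, so Proposition \ref{propvar} is doing the essential work; everything else is definitional bookkeeping.
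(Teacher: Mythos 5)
Your proposal is correct and follows essentially the same route as the paper: use the sign and convexity information from Proposition \ref{propvar} to identify the divergence-form Euler--Lagrange operator with $|\nabla\cdot|^\alpha\mathcal{M}^+$ (so that $v_\gamma$ solves the fully nonlinear eigenvalue equation at level $\mu=\frac{\Lambda}{1+\alpha}\bar\lambda_{\gamma,var}$), deduce $\mu\leq\bar\lambda_\gamma$ from the supremum definition, and rule out strict inequality with the maximum principle of Theorem \ref{maxpgamma} applied to $v_\gamma$ with $v_\gamma(1)=0$. Your explicit detour through $\bar\lambda_\gamma'$ and Corollary \ref{delta} only makes precise a step the paper leaves implicit, and your computation of $\mathcal{M}^+(D^2v_\gamma)$ under $v_\gamma'<0$, $v_\gamma''\geq0$ is exactly the justification the paper compresses into one line.
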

        \begin{proof}
      By proposition \ref{propvar}, $v_\gamma$ is bounded, satisfies $\lim v_\gamma^\prime r^{ \tilde N_+-1}=0$, $v_\gamma^\prime \leq 0$ and $(|v_\gamma^\prime |^\alpha v_\gamma^{\prime})^ \prime \geq 0$, so $v_\gamma$ satisfies 
     $$ | \nabla v_\gamma |^\alpha { \cal M} ^+( D^2 v_\gamma) = -{\Lambda \bar \lambda_{var, \gamma}\over \alpha+1} {v_\gamma^{1+\alpha} \over r^\gamma}$$
        
      In particular by the definition of $\bar \lambda_\gamma$, $\Lambda ( \alpha+1)^{-1} \bar \lambda_{var, \gamma} \leq \bar \lambda_\gamma$. 
       Furthermore, analyzing the boundary condition, we get, by regularity, that  $v_\gamma(1) =0$ in the classical sense, and then, 
      by the maximum principle, since $v_\gamma = 0$ on $\partial B(0,1)$, if one had $(\alpha+1)^{-1}\Lambda \bar \lambda_{var, \gamma} <\bar \lambda_\gamma$ one would get that $v_\gamma\leq 0$, a contradiction. 
\end{proof}
      
      \begin{cor} \label{convgammavar} One has
      $$\lim_{\gamma \rightarrow 2+\alpha}  \bar \lambda_\gamma (| \nabla \cdot |^\alpha \mathcal{M}^+)=  \Lambda(\alpha+1)^{-1} \, \left( \frac{(\tilde{N}_+-2)(\alpha+1)}{\alpha+2}\right)^{2+\alpha}\, .$$
      \end{cor}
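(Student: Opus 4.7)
The plan is to reduce everything to the variational eigenvalue via Corollary \ref{corvargamma}, which states that $\bar\lambda_\gamma(|\nabla\cdot|^\alpha\mathcal{M}^+) = \Lambda(\alpha+1)^{-1}\bar\lambda_{\gamma, var}$ for $\gamma \in (1, 2+\alpha)$. Since Theorem \ref{alpha+2} identifies $\bar\lambda_{var, 2+\alpha} = \tau^{\alpha+2}$ with $\tau = (\tilde N_+-2)(1+\alpha)/(2+\alpha)$, the corollary reduces to proving the one-sided continuity
$$\lim_{\gamma \to (2+\alpha)^-} \bar\lambda_{\gamma, var} = \bar\lambda_{var, 2+\alpha}.$$
I will bracket the limit from above and below using the Rayleigh quotient
$$R(v, \gamma) = \frac{\int_0^1 |v'|^{\alpha+2} r^{(\tilde N_+-1)(1+\alpha)}\,dr}{\int_0^1 |v|^{\alpha+2} r^{(\tilde N_+-1)(1+\alpha)-\gamma}\,dr}, \qquad v \in \mathcal{V}.$$

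For the upper bound I fix $v \in \mathcal{V}$, which is bounded with compact support in $[0,1)$. The standing assumption $\tilde N_+ > 2$ gives $(\tilde N_+-1)(1+\alpha) - (2+\alpha) > -1$, so the integrand in the denominator of $R(v, 2+\alpha)$ is integrable near $0$. Dominated convergence (the integrand at $\gamma$ is bounded by the one at $2+\alpha$ for $\gamma \le 2+\alpha$) then gives $R(v, \gamma) \to R(v, 2+\alpha)$ as $\gamma \to 2+\alpha$. Since $\bar\lambda_{\gamma, var} \leq R(v, \gamma)$, taking the $\limsup$ and then the infimum over $v \in \mathcal{V}$ yields $\limsup_{\gamma \to 2+\alpha} \bar\lambda_{\gamma, var} \leq \bar\lambda_{var, 2+\alpha} = \tau^{\alpha+2}$.

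The lower bound follows from a simple monotonicity: for $r \in (0,1)$ and $\gamma < 2+\alpha$ one has $r^{-\gamma} \leq r^{-(2+\alpha)}$, so the denominator in $R(v, \gamma)$ is no larger than that in $R(v, 2+\alpha)$, giving $R(v, \gamma) \geq R(v, 2+\alpha)$ for every $v \in \mathcal{V}$. Infimizing yields $\bar\lambda_{\gamma, var} \geq \bar\lambda_{var, 2+\alpha} = \tau^{\alpha+2}$, hence $\liminf_{\gamma \to 2+\alpha} \bar\lambda_{\gamma, var} \geq \tau^{\alpha+2}$. Combining the two bounds with Corollary \ref{corvargamma} gives the stated limit.

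The only delicate point is the integrability of the denominator at the endpoint $\gamma = 2+\alpha$, which is precisely guaranteed by $\tilde N_+ > 2$; once this is in hand the argument is purely functional-analytic and does not require any new information on the behavior of the eigenfunctions. I expect no further obstacle since the minimizers themselves (their regularity, sign, monotonicity) have already been handled in Proposition \ref{propvar}.
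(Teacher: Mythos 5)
Your proof is correct and essentially reproduces the paper's argument: both reduce, via Corollary \ref{corvargamma} and Theorem \ref{alpha+2}, to showing $\bar\lambda_{\gamma,var}\to\bar\lambda_{var,2+\alpha}$ as $\gamma\to 2+\alpha$, obtaining the lower bound from the pointwise comparison $r^{-\gamma}\le r^{-(2+\alpha)}$ on $(0,1)$ and the upper bound by fixing a test function and passing to the limit in the weighted denominator (the paper takes an $\epsilon$-minimizer in $\mathcal{H}^1_0$ and lets $\gamma\to 2+\alpha$, you fix $v\in\mathcal{V}$ and infimize afterwards). The only detail to make explicit is that the infimum of your Rayleigh quotient over $\mathcal{V}$ coincides with $\bar\lambda_{var,2+\alpha}$, which is defined as an infimum over $\mathcal{H}^1_0$ (and, likewise, that the monotonicity step for the lower bound should be applied to all admissible $v\in\mathcal{H}^1_0$, not only $v\in\mathcal{V}$); this follows from the density of $\mathcal{V}$ in $\mathcal{H}^1_0$ together with the continuity of the critical weighted $L^{2+\alpha}$ norm with respect to $\|\cdot\|$, which is exactly the Hardy-type inequality already established in Theorem \ref{alpha+2}, so it is a one-line addition rather than a genuine gap.
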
 
      \begin{proof} 
           By Corollary \ref{corvargamma}, it is sufficient to prove that $ \bar \lambda_{\gamma , var}\rightarrow \bar \lambda_{2+\alpha ,var}$ as $\gamma\to 2+\alpha $. 
           
         We first observe that, by their own definition,  $ \bar \lambda_{2+\alpha , var}\leq \bar \lambda_{\gamma, var}$.  
         
On the other hand, for  any  $\epsilon>0$ there exists $v\in \mathcal{H}^1_0$, $v\neq 0$  such  that 
         $$  \int_0^1 |v^\prime |^{2+\alpha}   r^{(\tilde{N}_+-1)(1+\alpha)} dr \leq ( \bar  \lambda_{2+\alpha ,var}+ \epsilon) 
         \int_0^1 |v |^{2+\alpha}   r^{(\tilde{N}_+-1)(1+\alpha)-2-\alpha} dr\, .$$
Take $\gamma_0$ sufficiently close  to $2+\alpha $ in order that, for $2+\alpha > \gamma \geq  \gamma_0$ , 
$$ \int_0^1 |v |^{2+\alpha}   r^{(\tilde{N}_+-1)(1+\alpha)  -\gamma } dr \geq  (1-\epsilon) \int_0^1 |v |^{2+\alpha}   r^{(\tilde{N}_+-1)(1+\alpha)-2-\alpha} dr\, .$$
Thus, one has
$$\int_0^1 |v^\prime |^{2+\alpha}   r^{(\tilde{N}_+-1)(1+\alpha) } dr \leq ( \bar  \lambda_{2+\alpha ,var}+ \epsilon) (1-\epsilon)^{-1}  \int_0^1 |v |^{2+\alpha}   r^{(\tilde{N}_+-1)(1+\alpha)  -\gamma } dr dr
$$
 which yields
             $$\bar \lambda_{\gamma, var} \leq  ( \bar \lambda_{ 2+\alpha ,var}+ \epsilon) (1-\epsilon)^{-1}\, . $$
 
                \end{proof}

   \begin{cor} \label{convgammavar} One has
      $$\bar \lambda_{2+\alpha} ( | \nabla \cdot |^\alpha \mathcal{M}^+)= { \Lambda\over 1+\alpha}  \, \left( \frac{(\tilde{N}_+-2)(1+\alpha) }{2+\alpha}\right)^{2+\alpha} \, .$$
      \end{cor}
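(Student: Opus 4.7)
The plan is to sandwich $\bar\lambda_{2+\alpha}$ between two quantities both equal to
$\bar\lambda^{\star} := \frac{\Lambda}{1+\alpha}\left(\frac{(\tilde N_+-2)(1+\alpha)}{2+\alpha}\right)^{2+\alpha}$.
Set $\tau := \frac{(\tilde N_+-2)(1+\alpha)}{2+\alpha}$. The lower bound will come from exhibiting an explicit supersolution, while the upper bound will come from the monotonicity of $\gamma\mapsto\bar\lambda_\gamma$ combined with the limit already established in the preceding corollary.

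For the lower bound I would verify by direct computation that $u(r)=r^{-\tau}$ realizes the value $\bar\lambda^{\star}$. Since $u'(r)=-\tau r^{-\tau-1}<0$ and $u''(r)=\tau(\tau+1)r^{-\tau-2}>0$, the radial formula for $\mathcal{M}^+$ gives
$\mathcal{M}^+(D^2u)=\Lambda\tau(\tau+1)r^{-\tau-2}-\lambda(N-1)\tau r^{-\tau-2}=\Lambda\tau[\tau-(\tilde N_+-2)]r^{-\tau-2}$,
and the choice of $\tau$ yields $\tau-(\tilde N_+-2)=-(\tilde N_+-2)/(2+\alpha)<0$. Multiplying by $|u'|^\alpha=\tau^\alpha r^{-(\tau+1)\alpha}$ and observing that both terms of the eigenvalue equation carry the same power $-(1+\alpha)\tau-(2+\alpha)$ of $r$, a short bookkeeping shows that $|\nabla u|^\alpha\mathcal{M}^+(D^2u)+\bar\lambda^{\star}\,u^{1+\alpha}r^{-(2+\alpha)}=0$. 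Since $u$ is of class $C^2$, radial and strictly positive on $B(0,1)\setminus\{0\}$, it is an admissible test function in the supremum \eqref{deflambdagamma} defining $\bar\lambda_{2+\alpha}$; the point worth stressing is that \eqref{deflambdagamma} only requires continuity on $B(0,1)\setminus\{0\}$, so the unboundedness of $r^{-\tau}$ at the origin is not an obstruction. This yields $\bar\lambda_{2+\alpha}\geq\bar\lambda^{\star}$.

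For the upper bound I would use elementary monotonicity. For $0<\gamma<2+\alpha$ and $r\in(0,1)$ one has $r^{-\gamma}\leq r^{-(2+\alpha)}$, so every admissible pair $(\mu,u)$ with $\mu\geq 0$ for $\bar\lambda_{2+\alpha}$ is also admissible for $\bar\lambda_\gamma$; hence $\bar\lambda_{2+\alpha}\leq\bar\lambda_\gamma$ for every such $\gamma$. Letting $\gamma\to 2+\alpha$ and invoking the preceding corollary, which gives $\lim_{\gamma\to 2+\alpha}\bar\lambda_\gamma=\bar\lambda^{\star}$, one concludes $\bar\lambda_{2+\alpha}\leq\bar\lambda^{\star}$ and closes the argument. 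The only genuinely computational step is the exponent bookkeeping for the explicit eigenfunction; the main conceptual point—that unbounded radial functions are allowed as competitors in \eqref{deflambdagamma}—is built into the definition and simply deserves an explicit remark.
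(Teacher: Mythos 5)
Your proof is correct and follows essentially the same route as the paper: the lower bound by checking that $u(r)=r^{-\tau}$ solves the equation with $\mu=\frac{\Lambda}{1+\alpha}\tau^{2+\alpha}$ and is an admissible competitor in \eqref{deflambdagamma}, and the upper bound from the monotonicity $\bar\lambda_{2+\alpha}\leq\bar\lambda_\gamma$ for $\gamma<2+\alpha$ combined with the limit $\lim_{\gamma\to 2+\alpha}\bar\lambda_\gamma=\frac{\Lambda}{1+\alpha}\tau^{2+\alpha}$ from the preceding corollary. The only difference is that you spell out the exponent bookkeeping and the admissibility of the unbounded competitor, which the paper leaves implicit.
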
 
      \begin{proof} 
           One has  for $\gamma>1$, $\gamma < 2+\alpha$ 
           $$ \bar \lambda_{2+\alpha} \leq \bar \lambda_{\gamma} =\Lambda  (\alpha+1)^{-1}\bar \lambda_{ var, \gamma} \rightarrow  (\alpha+1)^{-1}\ \Lambda\bar \lambda_{var, 2+\alpha} = \Lambda ( \alpha+1)^{-1} \tau^{\alpha+2}$$
           Furthermore taking the function 
           $ u(r) = r^{-\tau}$ one gets that $\bar \lambda_{2+\alpha} \geq   \Lambda ( \alpha+1)^{-1} \tau^{\alpha+2}$. 
           
                            \end{proof}
           \section{The case $\gamma > \alpha+2$}

           We prove Theorem \ref{gamma>}. Suppose that there exists a radial  ${ \cal C}_{1, \alpha}$ positive   supersolution, of the inequation 
           $$ | \nabla u|^\alpha  { \cal M}^-(D^2 u) + \mu u^{1+\alpha} r^{-\gamma} \leq 0$$
            with $\mu >0$ and $\gamma> \alpha+2$. 
            
            Acting as in the proof of Theorem \ref{exigamma} one easily obtains that $u^\prime$ does not change sign in a neighborhood of zero. We prove that the sign is $\leq 0$. Indeed, if we had $u^\prime \geq 0$ one would have  
             $$\Lambda (|u^\prime |^\alpha u^\prime )^\prime + \lambda {N-1)(1+\alpha)\over r} |u^\prime |^\alpha u^\prime \leq -\mu (1+\alpha) u r^{-\gamma}$$
              and then 
              $$  ( |u^\prime |^\alpha u^\prime r^{ (N_+-1)(1+\alpha)})^\prime \leq 0$$
               hence $( |u^\prime |^\alpha u^\prime r^{ (N_+-1)(1+\alpha)})$   has a limit when $r$ goes to zero, if this limit was $>0$ one would get that $u$ becomes large negative near zero. 
                Then $u^\prime \leq 0$. Coming back to the inequation, whatever is the sign of $(|u^\prime |^\alpha u^\prime )^\prime $ one has 
                
                \begin{equation}\label{ine} (|u^\prime |^\alpha u^\prime )^\prime +{ \Lambda (N-1)(1+\alpha)\over\lambda  r} |u^\prime |^\alpha u^\prime \leq -{\mu\over \Lambda}  (1+\alpha) u r^{-\gamma}
                \end{equation} 
               we then have 
              $$ ( |u^\prime |^\alpha u^\prime r^{ (N_--1)(1+\alpha)})^\prime \leq 0$$
               which implies that $ |u^\prime |^\alpha u^\prime r^{ (N_--1)(1+\alpha)}$ has a limit $\leq 0$. If it is $<0$, then there exists some constant $c>0$ so that 
               $$ u^\prime\leq -c r^{1-\tilde N_-}$$
                and integrating 
                $$ u(r) \geq c r^{2-\tilde N_-}.$$
                
                  We 
       prove that for all  integer $j\geq 0$ such  that $(\tilde N_--2)(1+\alpha)+ j( 2+\alpha-\gamma) >0$ and for $r$ sufficiently small, one has, for some $c_j>0$,
     \begin{equation}\label{eqj}u(r) \geq c_j r^{j(2+\alpha -\gamma)\over 1+\alpha}.
     .
     \end{equation}
Indeed, \eqref{eqj} holds true for $j=0$, since $u^\prime<0$ and  $u$ is positive.   Let us suppose that (\ref{eqj}) is true for 
$j$   and   that ${\tilde N_--2}+ ( j+1){2+\alpha -\gamma\over 1+\alpha} > 0$. Then, by (\ref{eqj}),   
      \begin{eqnarray*}
      -|u^\prime|^\alpha u^\prime  (r) r^{(\tilde N_--1)(1+\alpha) } & \geq& \frac{\mu}{\Lambda} c_j \int_0^{r} s^{(\tilde{N}_--1)(1+\alpha)+ j ( 2+\alpha -\gamma)-\gamma} ds \\
      &=&  \frac{\mu}{\Lambda} \frac{c_j}{(\tilde{N}_--1)(1+\alpha)+ j ( 2+\alpha -\gamma)+1-\gamma}  r^{(\tilde{N}_--1)(1+\alpha)+ j ( 2+\alpha -\gamma)+1-\gamma}\, ,
      \end{eqnarray*}
       which implies 
       \begin{equation}\label{equprime}-u^\prime  \geq c r^{j{2+\alpha-\gamma\over 1+\alpha} + {1-\gamma\over 1+\alpha}}
       \end{equation}
which, by integration, yields (\ref{eqj}) for $j+1$. 
            
 Now, let us assume  that ${ (\tilde N_--2)(1+\alpha) \over \gamma-2-\alpha} $ is not  integer. Then, using estimate \eqref{eqj} with $j=\left[ \frac{(\tilde{N}_--2)(1+\alpha)}{\gamma-2-\alpha }\right]$ jointly with \eqref{ine}, we deduce for $r_0>r>0$
 \begin{eqnarray*}
-|u^\prime (r_o)|^\alpha  u^\prime(r_0)  r_0^{{(\tilde N_--1)(1+\alpha)}}&+& |u^\prime (r) |^\alpha u^\prime (r) r^{{(\tilde N_--1)(1+\alpha) }}\\
 & \geq &\frac{ \mu\, c_j}{\Lambda\, \left((\tilde{N}_--2)(1+\alpha) + (j+1)(2+\alpha -\gamma)\right)}\left[ s^{(\tilde{N}_--2)(1+\alpha) + (j+1) (2+\alpha -\gamma)}\right]_r^{r_0}\, .
 \end{eqnarray*}
 
       Since $(\tilde{N}_--2)(1+\alpha)+ (j+1) (2+\alpha -\gamma)<0$, this yields the contradiction
$$\lim_{r\to 0}|u^\prime (r) |^\alpha u^\prime (r) r^{{(\tilde N_--1)(1+\alpha) }} =+\infty\,.$$
On the other hand, if $ \frac{(\tilde{N}_--2)(1+\alpha)}{\gamma-2-\alpha } = j+1$ is integer, then $(\tilde{N}_--2)(1+\alpha) +j(2+\alpha -\gamma)=\gamma-2-\alpha >0$, and from \eqref{eqj}  it follows that
            $$ u(r)\geq c_j r^{{\gamma+\alpha\over 1+\alpha}   -\tilde{N}_-}\, ,$$ 
hence            
            $$ u^{1+\alpha} (r) r^{( \tilde N_--1)(1+\alpha)-\gamma} \geq c_j r^{-1}\, .$$
From \eqref{ine} we then deduce, for $0<r<r_0$,
            $$-|u^\prime (r_o)|^\alpha  u^\prime(r_0)  r_0^{{(\tilde N_--1)(1+\alpha)}}+ |u^\prime (r) |^\alpha u^\prime (r) r^{{(\tilde N_--1)(1+\alpha) }} \geq \frac{\mu\, c_j}{\Lambda} \left(  \ln r_0 - \ln r\right)$$
             and we reach, also in this case, the contradiction
$$\lim_{r\to 0} u^\prime (r) r^{{\tilde N_--1}}=+\infty\,.$$

   \end{document}